\documentclass[10pt]{article}

\usepackage{amsfonts,amssymb,amsmath,amsthm,indentfirst}
\usepackage{fontenc,enumerate}
\usepackage{graphics}
\usepackage{color,graphicx}
\usepackage{bbm}

\usepackage{latexsym,mathrsfs,enumerate,verbatim,bm}
\usepackage{cite}

\usepackage[colorlinks, linkcolor=blue, citecolor=blue, urlcolor=blue, pagebackref, hypertexnames=false]{hyperref}

\usepackage[light]{antpolt}    \usepackage[T1]{fontenc}

\usepackage{wasysym}
\usepackage{graphics}

\usepackage{color,graphicx}

\DeclareMathAlphabet{\mathpzc}{OT1}{pzc}{m}{it}

\newcommand{\sett}[1]{\left\{   #1   \right\}}

\usepackage[colorinlistoftodos]{todonotes}
\makeatletter
\providecommand\@dotsep{5}
\def\listtodoname{List of Todos}
\def\listoftodos{\@starttoc{tdo}\listtodoname}
\makeatother
\allowdisplaybreaks
\usepackage{xcolor}
\usepackage[pagewise,mathlines]{lineno}

\newcommand{\vertiii}[1]{{\left\vert\kern-0.25ex\left\vert\kern-0.25ex\left\vert #1 
		\right\vert\kern-0.25ex\right\vert\kern-0.25ex\right\vert}}
\numberwithin{equation}{section}
\newtheorem{theorem}{\quad Theorem}[section]
\newtheorem{lemma}[theorem]{\quad Lemma}

\newtheorem{remark}[theorem]{\quad Remark}

\newtheorem{proposition}[theorem]{\quad Proposition}

\newtheorem{definition}{Definition}[section]

\newcommand{\R}{\mathbb{R}}

\newcommand{\dd}{{\rm d}}

\newcommand{\ii}{{\rm i}}

\newcommand{\N}{\mathbb{N}}

\newcommand{\C}{\mathbb{C}}

\newcommand{\al}{\alpha}
\newcommand{\rr}{\mathbb{R}}

\allowdisplaybreaks
  \author{Vicente Alvarez 
  and  Amin Esfahani
  \footnote{  
  E-mail: saesfahani@gmail.com, amin.esfahani@nu.edu.kz} }

\title{Existence of  dipoles    of Klein–Gordon–Zakharov system
	\footnotetext{2020 Mathematical subject classification:  35Q51, 35B40, 35B45, 35Q55}
	\footnotetext{Keywords: Dipole, Asymptotic behavior, KGZ system}}

\date{}

\begin{document}

	  \maketitle

	\begin{abstract}
 In this paper, we study the long time behavior of solutions of Klein-Gordon-Zakharov system. We show that there exists a  solution with special characteristics, which we shall refer to as a dipole solution, that is, there exists a solution $\vec{u}$ such that 
 	$$\left\|\vec{u}(t)-\sum_{k=1}^{2}\vec{R}_{k}\right\|_{X}
					\to 0 \, \, \text{as}\, \, t\to \infty,$$
                    where $\vec{R}_{k}$ represents a solitary wave for each $k$, with a translation $z_k$ with respect to its position, satisfying that $$|z_1(t)-z_2(t)| \sim 2\log(t)\, \, \text{as} \, \, t\to \infty.$$
 Our approach will initially focus on the spectral analysis of the Hamiltonian operator associated with our system. Subsequently, we aim to establish a coercivity estimate that will allow us to derive conditions ensuring the existence of our solution. It is important to note that, in this problem, our objective is to obtain approximate solutions by solving a final data problem. These approximate solutions will then be used, through uniform estimates and compactness results, to derive the desired conclusions via density arguments.

	\end{abstract}
 
	\section{Introduction}

In this paper, we consider the following system of  Klein–Gordon–Zakharov (KGZ) equations	
 	\begin{equation}\label{system1}
 	    \begin{cases}
      u_{tt}-u_{xx}+u+\al uv+\beta|u|^2u=0,\\
      v_{tt}-v_{xx}=(|u|^2)_{xx},\qquad x,t\in\rr,
  \end{cases}
 	\end{equation}
where $\alpha\in\rr$ and $\beta\neq0$. 
 Various forms of the Zakharov equations play a central role in the theoretical study of strong Langmuir turbulence in plasma physics (see Guo \cite{guo}, Ozawa, Tsutaya, and Tsutsumi \cite{ott,ott-2}, Thornhill and ter Haar \cite{Tt}, and Tsutaya \cite{Tsu}). A number of works have focused on the Klein-Gordon-Zakharov system \eqref{system1} with $\beta = 0$, which models the interaction between Langmuir waves and ion sound waves in plasma. In our notation, $u$ corresponds to the fast-scale component of the electric field, while $v$ represents the ion density fluctuation \cite{zakh}.

The earliest local well-posedness result for this system with $\beta=0$ appears in Ginibre and Velo \cite{gv} and Ozawa–Tsutsumi \cite{ott}, where a fixed-point argument in a Strichartz-type space yields solutions that are only conditionally unique. Later, Masmoudi and Nakanishi \cite{mana} established unconditional uniqueness, i.e., uniqueness in a larger energy space under stronger regularity assumptions. Important developments in the late 1990s led to global existence results for small solutions of \eqref{system1}, relying on conservation laws that are effective only for small initial data \cite{ott,ott-2}. Notably, the analysis heavily depends on the dimension and differing propagation speeds.

More recently, Hadžić, Shatah, and Snelson \cite{hss2013} showed that in the case $\beta = 0$, the system \eqref{system1} is locally well-posed in $H^s \times H^{s-1} \times H^{s-1} \times H^{s-2}$ for $s > 1/2$, with a Lipschitz continuous flow map.

Regarding the dynamics of standing waves, it was shown in \cite{hss2013} that for $\beta = 0$, the system admits solitary wave solutions of the form $(u(x,t), v(x,t)) = (\exp(i \omega t)\, \phi_\omega(x), \psi(x))$, where $\phi$ and $\psi$ are either real-valued periodic functions with fixed fundamental period $L$, or localized functions vanishing at infinity. In the latter case, it holds that $\psi = -\phi^2$ and
\[
\phi(x) = \sqrt{2(1 - \omega^2)}\; \mathrm{sech}\left(\sqrt{1 - \omega^2}\; x\right).
\]
Using a refined variational approach, Gan \cite{gan}, Gan and Zhang \cite{gabzhang} (see also \cite{ggz}), and later Ohta and Odorova \cite{ohtaT}, established the existence of ground state standing waves for \eqref{system1} when $\beta = 0$, and analyzed their instability using the potential well method of Payne and Sattinger together with Levine’s concavity argument in higher dimensions. In one spatial dimension, Lin \cite{lin} studied orbital stability and showed that such waves are orbitally unstable if $\omega < 1/\sqrt{2}$, and stable for $1 > \omega > 1/\sqrt{2}$. In the critical case $\omega = 1/\sqrt{2}$, instability was demonstrated in \cite{yin} via a modulation analysis combined with a virial identity.

Beyond ground states, an important class of solutions consists of sign-changing or \emph{dipole-type} solitary waves, which are nonlinear bound states exhibiting spatially antisymmetric profiles or internal nodes. In the present paper, we aim to study the dynamics of two solitary (ground state) waves solutions of \eqref{system1}. 

Such solutions are often constructed as excited states through constrained variational methods or bifurcation techniques \cite{dipole_solitons,dipole_nls}. In scalar models like the nonlinear Schrödinger (NLS) or Korteweg--de Vries (KdV) equations, dipole solutions are characterized by a pair of localized peaks with opposite signs, separated by a distance that, in many regimes, grows logarithmically with respect to a small parameter related to the frequency or energy of the system. Specifically, if $\varepsilon$ denotes such a small parameter, the optimal interaction distance between positive and negative peaks typically satisfies
\[
|x_+ - x_-| \sim C \log\left( \frac{1}{\varepsilon} \right),
\]
reflecting the delicate energy balance required to maintain such configurations \cite{sulem}. In coupled systems such as the Zakharov model, constructing dipole-type solutions is more subtle due to the interplay between the Schrödinger and wave components, and the presence of different propagation speeds. Nevertheless, analogous structures can exist, where $u$ is antisymmetric and $v$ exhibits compatible spatial decay. These configurations may be seen as nonlinear dipoles with out-of-phase behavior between the components. The logarithmic separation phenomenon persists in this context, driven by exponentially decaying interaction tails and constrained by the variational structure of the system.

A natural extension of this theory involves the study of multi-soliton or multi-dipole solutions, especially in regimes of either weak or strong interaction. Historically, such multi-solitary wave solutions were first constructed for integrable equations like the Korteweg–de Vries (KdV) and one-dimensional cubic NLS equations \cite{miura,zakhs}. In these integrable cases, multi-solitons are global-in-time solutions that asymptotically decompose into a sum of individual solitons as $t \to \pm\infty$. In contrast, for nonintegrable equations, exact multi-soliton solutions are generally not expected. However, asymptotic multi-soliton states can still be constructed in the nonlinear Schrödinger equation, the generalized KdV, and the nonlinear Klein–Gordon equation, where solitary components exhibit weak interaction over long times \cite{cotem,martel-1}.
Inspired by the integrable theory, further developments have led to the construction of multi-soliton solutions in regimes of strong interaction. This has been achieved for the mass-critical NLS \cite{mar-ra}, subcritical and supercritical NLS \cite{ng-2}, and gKdV equations \cite{ng-1}. In these cases, the interaction between solitons is significantly more rigid, with the inter-soliton distances typically scaling like $\log t$, and no alternative separation rates appear to be admissible. A classification result for such regimes is given in \cite{Jendrej}. Extending this rich theory to systems such as the nonlinear Klein-Gordon or Zakharov-type models, where multiple solitary structures interact dynamically, remains an active and challenging direction of current research.

%

%%%%%%%%%%%%%%%%%%%%%%%%%%%%%%%%

To present our results we first express \eqref{system1} in a more appropriate form, we rewrite it as follows
  	\begin{equation}\label{system2}
  	  \begin{cases}
  u_t=-\rho,\\
  \rho_t=-u_{xx}+u+\al uv+\beta|u|^2u,\\
v_t=n_x,\\
      n_{t}-v_{x}=(|u|^2)_{x}.
  \end{cases}  
  	\end{equation}
 Now, before proceeding, we must establish that, in the case of system \eqref{system2}, we refer to solitons as those solutions of the form $\vec{R}=(u,\rho,v,n)$, where
$$
\begin{aligned}
& u(x, t)=e^{\ii \omega t} \Phi_{\omega}(x), \quad v(x, t)=\Psi_{\omega}(x), \\
& \rho(x, t)=e^{\ii \omega t} \rho_{\omega}(x), \quad n(x, t)=\eta_{\omega}(x), 
\end{aligned}
$$
 So, $\vec R$ satisfies
\begin{equation}\label{system3}
  \left\{\begin{array}{l}
\rho_{\omega}=\ii \omega \Phi_{\omega},  \\
-\ii \omega \rho_{\omega}=-\Phi_{\omega}^{\prime \prime}+\Phi_{\omega}+\alpha \Phi_{\omega} \Psi_{\omega}+\beta\left|\Phi_{\omega}\right|^{2} \Phi_{\omega}, \\
0=\eta_{\omega}^{\prime}, \\
0=\Psi_{\omega}^{\prime}+\left(\left|\Phi_{\omega}\right|^{2}\right)'.
\end{array}\right.  
\end{equation}
Which is equivalent to
\begin{equation}\label{system4}
  \left\{\begin{array}{l}
\rho_{\omega}=\ii \omega \Phi_{\omega},  \\
-\Phi_{\omega}^{\prime \prime}+(1-\omega^2)\Phi_{\omega}+\alpha \Phi_{\omega} \Psi_{\omega}+\beta\left|\Phi_{\omega}\right|^{2} \Phi_{\omega}=0, \\
\Psi_{\omega}^{\prime}=-\left(\left|\Phi_{\omega}\right|^{2}\right)'.
\end{array}\right.  
\end{equation}
Hence, $\Phi_\omega$ satisfies
\begin{equation}\label{omegastate}
  -\partial_{xx}\Phi_{\omega}+(1-\omega^2)\Phi_{\omega}+(\beta-\alpha)\Phi_{\omega}^{3}=0,  
\end{equation}
Hence, $$\Phi_{\omega}(x)=\sqrt{1-\omega^2}\Phi(\sqrt{1-\omega^2} x),$$
where $\Phi$ is solution of \begin{equation}\label{states}
    -\partial_{xx}\Phi+\Phi+\mathbbm{m}\Phi^3=0
\end{equation} 
with  $\mathbbm{m}=\beta-\alpha<0$. 
It is not difficult to infer that the solution of system \eqref{system3} are in the form of
\begin{equation}\label{soliton1}
    \Vec{\Phi}_{\omega}=(\Phi_{\omega},i \omega\Phi_{\omega}, -|\Phi_{\omega}|^2,0).
\end{equation}
We denote by solitons the solutions of the form $$\vec{R}_{\omega}:=T(\omega ) \Vec{\Phi}_{\omega},$$ where $T(\theta)(f,g,h,k)=(e^{\ii\theta t }f,e^{\ii\theta t }g, h,k).$
 The ground state $\Phi$ solution of   \eqref{states} can be written as $\Phi(x)=q(|x|)$, where $q(x)=\sqrt{\frac{2}{-\mathbbm{m}}}{\rm sech}(x)$. Equivalently, $\Phi_{\omega}(x)=q_{\omega}(|x|)=\sqrt{1-\omega^2}q(\sqrt{1-\omega^2}|x|)$ with $q>0$ satisfying the ODE,
\begin{equation}\label{ode}
-q^{\prime \prime}+(1-\omega^2)q+mq^{3}
=0, \quad q^{\prime}(0)=0, \quad \lim _{r \rightarrow+\infty} q(r)=0 .
\end{equation} 
 
 It is easy to see that 
\begin{equation}\label{ground}
\left|q_{\omega}(r)-\kappa  e^{-(1-\omega^2)^{1/2}r}\right|+\left|q_{\omega}^{\prime}(r)+\kappa  e^{-(1-\omega^2)^{1/2}r}\right| \lesssim r^{-1} e^{-(1-\omega^2)^{1/2}r}.
\end{equation}
for some $\kappa>0$ and $r>1$. See also \cite[Section 4.2]{sulem}.
Additionally, we also have the following result (see \cite[Theorem 8.1.1]{cazenave}). 

Once the soliton solutions for system \eqref{system2} have been defined, we proceed to establish our object of study, namely, to ensure the existence of dipole solutions for system \eqref{system1}.

\begin{theorem}\label{2maintheorem}
				For $j\in \{1,2\}$ let us  $|\omega_{j}|\leq 1$.
				%Define 
			%\[
			%	\begin{aligned}
			%		\omega_{\star}:=\frac{1}{256}\min \left\{(\lambda_0^j)^{\frac32}\omega_j,1-\omega_{j}^2, \left|\omega_{j}-\omega_{k}\right|:  j, k=1,2, j \neq k\right\}. 
			%	\end{aligned}
		%\]
				If $\omega_{1} \neq \omega_{2}$, then there exist $T_{0} \in \R$ and a solution $\Vec{u}$ for \eqref{system2} defined in $\left[T_{0},+\infty\right)$ such that for all $t \in\left[T_{0},+\infty\right)$ the following estimate holds
				\begin{equation*}\label{2desprin}
					\left\|\vec{u}(t)-\sum_{k=1}^{2}\vec{R}_{k}\right\|_{X}
					\leq t^{-1},
				\end{equation*}
                where $X=H^1(\R)\times H^1(\R)\times L^2(\R)\times L^2(\R)$, $\vec{R}_{k}=T(\omega)\vec{\Phi}_{k}=T(\omega)\vec{\Phi}_{\omega_k}( \cdot- z_{k}(t
                    ))$ as in \eqref{standing}
                and $$|z_{1}(t)-z_{2}(t)|=(1+o(1))\log t  \qquad \mbox{as} \, \, t\to \infty.$$
			\end{theorem}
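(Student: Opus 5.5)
The construction follows the strategy for strongly interacting multi-solitons of Martel--Raphaël and Nguyen, adapted to the Klein--Gordon--Zakharov structure of \eqref{system2}. We first build an approximate solution. The two solitary waves $\vec R_1,\vec R_2$ are placed at positions $z_1(t),z_2(t)$ with $z(t)=z_1(t)-z_2(t)$ large. We compute the interaction terms produced by inserting $\sum_k \vec R_k$ into \eqref{system2}. By the tail asymptotics \eqref{ground}, the leading interaction has size $e^{-\sqrt{1-\omega^2}\,|z|}$, up to the appropriate exponent for each $\omega_k$. Projecting the error onto the generalized kernel of the linearized operator (translation and phase directions) yields a formal modulation system. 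For the distance this is schematically
\[
\ddot z \approx -c\, e^{-|z|}
\]
in rescaled units, with $c>0$ coming from the attractive cubic term $\mathbbm{m}<0$. Its solution is $|z(t)|=2\log t+O(1)$ in those units, which after unscaling gives $(1+o(1))\log t$. If needed, the profile is refined by adding a first-order correction that solves the linearized equation with the interaction term as source. This makes the residual $O(t^{-2-\delta})$ in $X$.

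Next we analyze the linearized Hamiltonian around each $\vec R_k$. The operator is $H_k=E''(\vec R_k)+\omega_k Q''(\vec R_k)$, built from the conserved energy and the charge. Using the explicit sech profile, its scalar part has one negative direction and a kernel spanned by $\partial_x\vec\Phi_{\omega_k}$ and $i\vec\Phi_{\omega_k}$. The $v,n$ components give a positive wave part once $v+|u|^2$ is substituted. We prove coercivity, $\langle H_k\varepsilon,\varepsilon\rangle\ge \lambda\|\varepsilon\|_X^2$, for $\varepsilon$ orthogonal to the kernel directions and to the negative eigendirection. We then localize this with cutoff functions, centered at $z_1$ and $z_2$ respectively, to get a global coercive quadratic form for the sum. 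The assumption $\omega_1\ne\omega_2$ enters here: it makes the two phases decouple, so that cross terms average out, and it lets us use the localized charges separately.

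Then we solve final-data problems. For $S_n\to\infty$ we take $\vec u_n(S_n)$ equal to the approximate solution, with modulation parameters chosen in the correct regime, and solve \eqref{system2} backward using local well-posedness. The main step is a bootstrap on $[T_0,S_n]$, uniform in $n$. We write
\[
\vec u_n=\sum_k\vec R_k(\cdot;z_k,\theta_k,\omega_k)+\varepsilon,
\]
where the modulation parameters are fixed by orthogonality conditions through the implicit function theorem. We then show $\|\varepsilon(t)\|_X\le t^{-1}$ and $|z(t)-\log t|\le \log t/\log\log t$. The energy component is controlled by a localized Weinstein-type functional: the energy, plus the localized charges multiplied by $\omega_k$, plus localized momenta. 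Its time derivative is bounded by the residual and by cutoff errors of size $O(t^{-2-\delta})$, so integrating backward from $S_n$ gives $\|\varepsilon\|^2\lesssim t^{-2-\delta}$, which is better than assumed. The unstable directions, the negative eigenvalues of $H_k$ (two real exponential modes $e^{\pm e_0 t}$ each), are handled by a topological (Brouwer) shooting argument on the final data, as in Côte--Martel--Merle.

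The main obstacle is closing the bootstrap for the modulation ODE. Unlike the weak-interaction case, the forcing $e^{-|z|}\sim t^{-2}$ is exactly critical, so the ODE for $z$ is unstable. The error in $\ddot z$ must be $o(t^{-2})$, which requires the refined profile and sharp control of $\varepsilon$ in the projection. On top of this, the conservation of the $(\dot z,z)$ energy must be used to exclude the growing direction. We would first try to handle the second instability by also shooting on $z(S_n)$ (a one-parameter continuity argument). Finally, the uniform estimates together with compactness, in the form of weak limits and $H^1$ local compactness from exponential localization, give $\vec u_n(T_0)\to\vec u(T_0)$. By continuous dependence, the limit $\vec u$ satisfies the bound $\|\vec u(t)-\sum_k\vec R_k\|_X\le t^{-1}$ and the stated asymptotics of $|z_1-z_2|$.
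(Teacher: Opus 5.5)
Your outline has the same architecture as the paper's proof: final-data problems on $[T_0,T_n]$, modulation under orthogonality conditions, a localized functional $\sum_k(E_k-\omega_kQ_k)$ coercive modulo $\lambda_k^\pm$, backward integration of the stable modes, a joint topological argument on $(z(T_n),\lambda^-(T_n))$ driven by the zero-energy first integral of the distance ODE, and weak compactness. However, two load-bearing steps fail as written.

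\textbf{First, the distance law.} You say that $\omega_1\neq\omega_2$ decouples the phases so that cross terms average out. Yet the force you use for $z$ is itself a cross term. With $R_k=e^{i(\omega_kt+\theta_k)}\Phi_k$ and $G=f(R_1+R_2)-f(R_1)-f(R_2)=2|R_1|^2R_2+R_1^2\bar R_2+2|R_2|^2R_1+\bar R_1R_2^2$, one finds
\[
\langle G,\partial_xR_1\rangle=3\cos\bigl((\omega_1-\omega_2)t+\theta_1-\theta_2\bigr)\int_{\R}\Phi_1^2\,\partial_x\Phi_1\,\Phi_2\,dx+O\Bigl(\int_{\R}\Phi_1\,|\partial_x\Phi_1|\,\Phi_2^2\,dx\Bigr).
\]
The $\alpha uv$ coupling (with $v$-components $-\Phi_k^2$) has the same structure. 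Let $\mu=\sqrt{1-\omega_k^2}\le1$ be the decay rate in \eqref{ground}; the leading force has size $e^{-\mu|z|}$ and zero mean when $\omega_1\neq\omega_2$.
\begin{itemize}
\item Along $|z|\approx\frac{2}{\mu}\log t$ this force changes $\dot z$ only by $O(e^{-\mu|z|})=O(t^{-2})$. That is far below the $\dot z\approx\frac{2}{\mu t}$ the law requires.
\item The non-oscillating part is quadratic in the tail of the other wave, and you have not determined its sign.
\end{itemize}
Hence $\ddot z\approx-ce^{-|z|}$ is not justified in the regime you describe. The paper makes no averaging use of $\omega_1\neq\omega_2$; it takes the interaction in the phase-independent form of Lemma~\ref{solitons}(iii).

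Independently, your reconciliation of the constant is wrong. The equation $\ddot z=-ce^{-\mu z}$ gives $|z|=\frac{2}{\mu}\log t+O(1)$ for every $c>0$ and every time normalization. Undoing a spatial rescaling therefore only enlarges the coefficient and can never give $(1+o(1))\log t$. Your bootstrap hypothesis $|z-\log t|\le\log t/\log\log t$ contradicts your own ODE. The paper's argument actually closes with $\bigl||z(t)|-2\log t\bigr|\lesssim\log^{-1/2}t$ (Proposition~\ref{prop1}).

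\textbf{Second, the energy estimate.} A derivative bound $O(t^{-2-\delta})$ integrated backward from $S_n$ gives $O(t^{-1-\delta})$, not $t^{-2-\delta}$. To improve $\|\vec\varepsilon\|_X\le t^{-1}$ you need $\bigl|\frac{d}{dt}\mathcal S\bigr|=o(t^{-3})$; the paper asserts $Ct^{-3}\log^{-3}t$ in Lemma~\ref{VariacionS}. The residual enters only through its pairing with $\vec\varepsilon$, which is harmless, but the cutoff errors are not.
\begin{itemize}
\item Since $\partial_t\Im(\bar u\rho)=-\partial_x\Im(\bar u\,\partial_xu)$ and the localized charges carry the fixed weights $\omega_1\neq\omega_2$, $\frac{d}{dt}\mathcal S$ contains the fluxes $\omega_k\Im\int\bar u\,\partial_xu\,\partial_x\phi_k$, and these do not cancel.
\item In the transition region the waves are exponentially small, so the main part of these terms is quadratic in $\vec\varepsilon$ and has no sign.
\item They gain only $|\partial_x\phi_k|\lesssim(\log t)^{-1}$, because the cutoff width cannot exceed $|z|\sim\log t$.
\end{itemize}
The resulting bound $t^{-2}\log^{-1}t$ integrates to $t^{-1}\log^{-1}t$, which does not close the bootstrap. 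In weak-interaction constructions such fluxes are controlled by monotonicity coming from distinct velocities. Here both waves are essentially at rest, and nothing in your outline replaces that mechanism.
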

{Note that, as in \cite{ARYAN},  the Theorem \ref{2maintheorem} establishes  solutions with the same sign that remain at a distance from each other dictated by a logarithmic function. That is, the behavior of the solutions is strongly linked to an ODE of the form$$\ddot{z}(t)=-2e^{-z(t)},$$ whose explicit solution can be described under certain conditions $(z(1),\dot{z}(1))=(0,2)$ and is given by $2\log(t).$}
 
 The proof of the main theorem is divided into two main parts. In the first part, we aim to establish a key coercivity result, which relies on spectral theory techniques applied to the Hamiltonian operator associated with the system under consideration. This coercivity result enables us to impose suitable conditions on certain parameters that must be modulated and orthogonalized to ensure precise control over the associated estimates. 

 A crucial element in this development is the derivation of a bootstrap result for the approximate solutions. Additionally, we introduce a suitable parametrization of specific quantities, allowing us to obtain uniform estimates on relevant operators. These estimates will later be combined to derive the desired conclusions. 

 The second part of the proof is more technical in nature. It focuses on constructing approximate solutions to system \eqref{system2} that satisfy the properties required in the main theorem. Once these approximate solutions are rigorously defined, convergence strategies are employed to establish the existence of a dipole solution in the space $X$ for system \eqref{system1}. 

 Although the proof of Theorem \ref{2maintheorem} is inspired by the approach in \cite{ARYAN}, several modifications are required in our setting, particularly concerning the choice of modulation parameters and the localization strategy. In developing our method, we encounter distinct challenges, primarily because the parameters under consideration differ significantly from those in previous works. Furthermore, we introduce a different operator associated with the system, which necessitates a tailored control strategy. Our approach relies on bounding this operator using prior estimates arising from the interaction between the solitons and the intrinsic structure of the system under study.

The structure of the paper is as follows. In the next section, we present some preliminary results on the solution $\vec\Phi$ and establish a well-posedness result for the Cauchy problem associated with \eqref{system1}.  In Section~\ref{section-coer}, we prove a coercivity property that is crucial to our analysis and ultimately leads to the existence of a dipole solution. We will later establish some results for final data solutions of system \eqref{system2}, and similarly, we will derive uniform estimates that will allow us to obtain the desired result.

            \section{Preliminaries }
In this section, we present some preliminary results and facts about the solitary waves of \eqref{system1}, and we also provide a well-posedness result for the associated Cauchy problem.
            
\begin{proposition}\label{decaimentoquadratico}
		 Consider $\Phi \in H^1(\R) $ a solution of $$-\partial_{xx}\Phi-\Phi+\mathbbm{m}\Phi^{3}=0.$$ 
 Then,

			\begin{enumerate}[(i)]

\item  $\Phi \in C^{2}(\R)$ and $\left|\partial_x^{\beta} \Phi(x)\right|\stackrel{|x|\to \infty}{\longrightarrow } 0$ for all $|\beta| \leq 2$.
				
\item For any  $0<\epsilon <1$,
				\[
				e^{\epsilon|x|} \left(\left|\Phi(x)\right|+\left|\partial_{x} \Phi(x)\right|\right) \in L^{\infty}(\R).
				\]    
			\end{enumerate}	 
		\end{proposition}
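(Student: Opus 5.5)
The plan is to argue directly from the equation as stated, $-\partial_{xx}\Phi-\Phi+\mathbbm{m}\Phi^{3}=0$ with $\Phi\in H^1(\R)$. First I get regularity and vanishing at infinity by a one-step bootstrap. Then a first integral forces $\Phi\equiv 0$, so the exponential bound in (ii) is trivial. I treat $\Phi$ as real-valued. In the complex case I read the cubic term as $\mathbbm{m}|\Phi|^2\Phi$ and take real parts wherever I multiply by $\overline{\Phi'}$; every step goes through unchanged.

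\emph{Step 1 (regularity, part (i)).} Since $H^1(\R)\hookrightarrow L^\infty(\R)\cap C(\R)$, we have $\Phi^3\in L^2$. The equation in the distributional sense then gives $\partial_{xx}\Phi=-\Phi+\mathbbm{m}\Phi^3\in L^2$, so $\Phi\in H^2(\R)\subset C^1(\R)$. The right-hand side $-\Phi+\mathbbm{m}\Phi^3$ is continuous, so $\Phi''$ is continuous and $\Phi\in C^2(\R)$. Both $\Phi$ and $\Phi'$ lie in $H^1(\R)$, so they tend to $0$ as $|x|\to\infty$. Finally $\Phi''=-\Phi+\mathbbm{m}\Phi^3$ also tends to $0$. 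This proves (i).

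\emph{Step 2 (first integral).} Multiplying the equation by $\Phi'$ gives $\frac{d}{dx}\big[(\Phi')^2\big]=2\Phi'\Phi''=\frac{d}{dx}\big[-\Phi^2+\tfrac{\mathbbm{m}}{2}\Phi^4\big]$. Hence
\[
(\Phi')^2+\Phi^2-\tfrac{\mathbbm{m}}{2}\Phi^4\equiv C
\]
on $\R$. Letting $x\to\infty$ and using Step 1 gives $C=0$. Since $\mathbbm{m}<0$, all three terms are nonnegative, so each vanishes and $\Phi\equiv 0$.

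The argument does not really depend on the sign of $\mathbbm{m}$. For $|x|$ large, $|\Phi|$ is small, so $\Phi^2(1-\tfrac{\mathbbm{m}}{2}\Phi^2)\ge \tfrac12\Phi^2$ there. This forces $\Phi=\Phi'=0$ at some point, and uniqueness for the Cauchy problem of this $C^2$ ODE with locally Lipschitz nonlinearity gives $\Phi\equiv0$.

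\emph{Step 3 (part (ii)).} Because $\Phi\equiv0$, the function $e^{\epsilon|x|}(|\Phi|+|\partial_x\Phi|)$ vanishes identically and is trivially in $L^\infty$ for every $0<\epsilon<1$.

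The only step needing care is the justification in Step 2 that $C=0$, which rests on the decay from Step 1. There is no real obstacle. I would remark that the statement is presumably intended for the ground-state equation \eqref{states}, with $+\Phi$ in place of $-\Phi$. For that equation the same bootstrap applies, and (ii) would instead follow from a comparison or ODE argument near infinity, consistent with \eqref{ground}.
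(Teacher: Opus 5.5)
As a proof of the statement as typed, your argument is correct and complete. There is no proof in the paper to compare it with: the proposition is stated without proof, apparently transcribed (multi-index notation $|\beta|\le 2$ included) from the result the introduction cites as Cazenave's Theorem~8.1.1, and it is meant for the ground-state equation \eqref{states}, $-\Phi''+\Phi+\mathbbm{m}\Phi^3=0$. Your bootstrap ($\Phi\in H^2\subset C^1$, then $\Phi''=-\Phi+\mathbbm{m}\Phi^3$ continuous and vanishing at infinity) proves (i). The first integral $(\Phi')^2+\Phi^2-\frac{\mathbbm{m}}{2}\Phi^4\equiv 0$ then forces $\Phi\equiv0$, so (ii) holds trivially. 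Your sign-independent variant via ODE uniqueness is also right. You are correct that this exposes a sign typo: the paper's later use of (ii), e.g.\ the bound $|\Phi_\omega(y)|\lesssim e^{-\frac23\sqrt{1-\omega^2}|y|}$ in the proof of Lemma~\ref{solitons}, only has content for the $\mathrm{sech}$ profile.

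The caveat is that, read literally, your proposal says nothing about the function the paper actually uses. For \eqref{states} your Step~1 goes through verbatim, but part (ii), the only substantive claim, is left as the phrase ``a comparison or ODE argument.'' Your own first-integral tool supplies it in a few lines.

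With the $+\Phi$ sign and the decay from (i), the identity becomes $(\Phi')^2=\Phi^2\bigl(1+\frac{\mathbbm{m}}{2}\Phi^2\bigr)$. If $\Phi(x_0)=0$ at some point, then $\Phi'(x_0)=0$, so $\Phi\equiv0$ by uniqueness. Hence a nontrivial real solution never vanishes; say $\Phi>0$.

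Given $\epsilon\in(0,1)$, pick $R$ with $1+\frac{\mathbbm{m}}{2}\Phi^2\ge\epsilon^2$ on $|x|\ge R$. On $[R,\infty)$ this gives $|\Phi'|\ge\epsilon\Phi>0$, so $\Phi'$ has a constant sign, which must be negative because $\Phi\to0$. Thus $(e^{\epsilon x}\Phi)'\le0$, and $\Phi(x)\le\Phi(R)e^{-\epsilon(x-R)}$ for $x\ge R$. The half-line $x\le-R$ is symmetric. Finally, $|\Phi'|\le|\Phi|$ (because $\mathbbm{m}<0$) transfers the bound to $\Phi'$.

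This one-dimensional route is more elementary than the general elliptic machinery behind the cited theorem. It also gives every rate $\epsilon<1$ directly, which is exactly how (ii) is stated.
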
 
     Let us then consider the following notations $(z_1,z_2,\omega_1,\omega_2)$ with $|\omega_1|<1, \, \, |\omega_2|<1$, 
 $\theta_1, \theta_2 \in \R$  and $|z_1|\geq 1$, $|z_2|\geq 1$, where $$z=z_1-z_2 \, \, \, and \, \, \, \omega=\omega_1-\omega_2.$$ Then, we define the  {standing  waves}  by
 \begin{equation}\label{standing}
    \Phi_{k}=\Phi_{\omega_k}(\cdot-z_k) \, \, \, and \, \, \, \vec{\Phi}_{k} = (\Phi_{k}^{(1)},\Phi_{k}^{(2)},\Phi_{k}^{(3)},\Phi_{k}^{(4)})=(\Phi_{k},i\omega_k\Phi_{k},-\left|\Phi_{k}\right|^2,0).
 \end{equation}
 $$\vec{R}_{k}=T(\omega_k)\vec{\Phi}_{k}\, \, \, \, \, and \, \, \, \,\vec{R}_{k} =(R_{k}^{(1)},R_{k}^{(2)},R_{k}^{(3)},R_{k}^{(4)})=(e^{i\omega_k t }\Phi_{k},i\omega_ke^{i\omega_k t}\Phi_{k},-\left|\Phi_{k}\right|^2,0)$$

\begin{lemma}\label{solitons}
    The following estimates hold for $|z| \gg 1$. For $\omega_1\neq \omega_2$ and $\omega_{\star}=\frac{1}{81}\min\left\{1-\omega_1^2,1-\omega_2^2\right\}$
 
   \begin{enumerate}[(i)]

        \item  For $f:\C\to \R$ such that $f(u)=|u|^2u$,

\begin{equation*}
\int\left|\Phi_{1}^{(j)} \Phi_{2}^{(k)}\right|^{2} \leq C  {e^{-6\sqrt{\omega_{\star}}|z|}} , \quad \int_{\R}\left| \Phi_{j}^{(1)} \partial_x \Phi_{k}^{(1)}\right|^{2} \leq C  {e^{-6\sqrt{\omega_{\star}}|z|}} \quad k,j=1,2.
\end{equation*}
\item $$\left|\left\langle f(R_1^{(1)}),  R_2^{(1)}\right\rangle-c_{1}g_0 q(|z|)\right|\lesssim |z|^{-1} e^{-\sqrt{\omega_{\star}}|z|}.$$
%and
%\begin{equation*}
%\int_{\R}\left|F(\Phi_{1}^{(1)})\Phi_{2}^{(1)})-F\left(\Phi_{1}^{(1)}\right)-F\left(\Phi_{2}^{(1)}\right)-f\left(\Phi_{1}^{(1)}\right) \Phi_{2}^{(1)}-f\left(\Phi_{2}^{(1)}\right) \Phi_{1}^{(1)}\right| \lesssim t^{-\frac{5}{2}} 
%\end{equation*}
% with   $f(u)=|u|^2u$  and $F(u)=\frac{1}{4}|u|^4$. 
  
%\item For any $m>0$,
%\begin{gather*}
%\int_{\R} \abso{\Phi_{1}^{(j)} }  \abso{\Phi_{2}^{(k)}} ^{1+m} \leq C q(|z|) \quad k,j=1,2,3,4.
%\end{gather*}
 
\item  There exists a smooth function $g:[0,+\infty) \rightarrow \mathbb{R}$ such that, for  $r>1$,
\begin{equation}\label{3.5}
\left|g(r)-g_{0} q_{\omega_1}(r)\right| \lesssim r^{-1} e^{-\sqrt{\omega_{\star}}r}
\end{equation}
where
  \[
g_{0}=\frac{1}{c_{1}} \int_{\R} \Phi_{\omega_1}^{3}(x) e^{-x} \mathrm{~d} x>0, \quad c_{1}=\left\|\partial_{x} \Phi_{\omega_1}\right\|_{L^{2}(\R)}^{2} 
\] 
 and
\begin{equation}\label{4}
\left|\left\langle f(R_1^{(1)}+R_2^{(1)})-f(R_1^{(1)})-f(R_2^{(1)}), \partial_{x} R_{1}^{(1)}\right\rangle-c_{1} \frac{z}{|z|} g(|z|)\right| \lesssim e^{-6\sqrt{\omega_{\star}}|z|}.   
\end{equation}
\begin{equation}\label{5}
\left|\left\langle f(R_1^{(1)}+R_2^{(1)})-f(R^{(1)})-f(R_2^{(1)}), \partial_{x} R_{2}^{(1)}\right\rangle+c_{1} \frac{z}{|z|} g(|z|)\right| \lesssim e^{-6\sqrt{\omega_{\star}}|z|}.
\end{equation}
\end{enumerate}
\end{lemma}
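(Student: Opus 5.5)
The plan is to reduce all three items to elementary integrals of products of exponentially decaying profiles, using the explicit form $\Phi_{\omega_k}(x)=q_{\omega_k}(|x|)$ and the asymptotics \eqref{ground}. Throughout I set $\lambda_k=\sqrt{1-\omega_k^2}$ and note that $\sqrt{\omega_\star}\le \lambda_k/9$. Hence any decay rate written as $e^{-c\sqrt{\omega_\star}|z|}$ with $c\le 9\cdot(\text{available rate})/\min\lambda_k$ is a lossy but valid consequence of decay like $e^{-\min(\lambda_1,\lambda_2)|z|}$. The phase factors $e^{i\omega_k t}$ have modulus one and drop out of every absolute value. In the inner products they only produce a bounded oscillating factor, which I absorb by working with the real profiles; the constants $c_1g_0$ are understood for the profile $\Phi_{\omega_1}$, as in the statement.

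\emph{Item (i).} By Proposition~\ref{decaimentoquadratico}(ii) applied to $\Phi_{\omega_k}$ after rescaling, each component $\Phi_k^{(j)}$ and $\partial_x\Phi_k^{(1)}$ is bounded by $C_\epsilon e^{-\epsilon\lambda_k|x-z_k|}$ for any $\epsilon<1$. The third component is quadratic, so it decays twice as fast. The key estimate is
\[
\int e^{-2a|x-z_1|}e^{-2b|x-z_2|}\,dx\lesssim (1+|z|)\,e^{-2\min(a,b)|z|},
\]
obtained by splitting $\R$ at the midpoint of $z_1,z_2$ and using the triangle inequality. With $a,b\ge \epsilon\min\lambda_k$, $\epsilon$ close to $1$, and $\lambda_k\ge 9\sqrt{\omega_\star}$, the exponent $2\epsilon\min\lambda_k$ exceeds $6\sqrt{\omega_\star}$ with room to spare. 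The polynomial factor is then absorbed.

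\emph{Items (ii)--(iii).} These are the standard soliton interaction computations, as in Martel--Merle and in \cite{ARYAN}. For (ii), translate so that $z_1=0$ and split $\R$ at $|z|/2$.
\begin{itemize}
\item On $\{x<|z|/2\}$, replace $\Phi_2(x)=q_{\omega_2}(|x-z|)$ by $\kappa e^{-\lambda_2(|z|-x)}$ using \eqref{ground}. The error is $|z|^{-1}e^{-\lambda_2|z|}$ times an integrable weight, because $\Phi_1^3$ decays like $e^{-3\lambda_1|x|}$. The main term is $\kappa e^{-\lambda|z|}\int\Phi_{\omega_1}^3e^{\lambda x}$, which is $c_1g_0q(|z|)$ up to the stated error once one again uses \eqref{ground} to identify $\kappa e^{-r}$ with $q(r)$.
\item On the complementary half-line, both factors are small and the contribution is $O(e^{-3\lambda|z|/2})$.
\end{itemize}
For (iii), first expand
\[
f(R_1+R_2)-f(R_1)-f(R_2)=2|R_1|^2R_2+R_1^2\bar R_2+2|R_2|^2R_1+R_2^2\bar R_1 .
\]
Pair this with $\partial_xR_1$. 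The terms that are quadratic in $R_2$ are $O(e^{-6\sqrt{\omega_\star}|z|})$ by (i) and Cauchy--Schwarz. The terms linear in $R_2$ combine into $\langle \partial_x(\Phi_1^3),\Phi_2\rangle$ up to phases. Integrating by parts gives $-\langle\Phi_1^3,\partial_x\Phi_2\rangle$, and I define
\[
g(|z|)\,\frac{z}{|z|}:=\frac{1}{c_1}\bigl\langle \Phi_{\omega_1}^3,\ -\partial_x\Phi_{\omega_2}(\cdot-z)\bigr\rangle .
\]
This is smooth in $z$ and odd by the evenness of the profiles, so it has the form $\frac{z}{|z|}g(|z|)$. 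The asymptotic \eqref{3.5} then follows exactly as in (ii), using the derivative part of \eqref{ground} and the sign change of $q'$ to produce the sign $z/|z|$. Estimate \eqref{5} follows from \eqref{4} by symmetry: exchanging the roles of the two solitons replaces $z$ by $-z$. To get $+c_1$ in \eqref{4} and $-c_1$ in \eqref{5}, I would also use the identity $\langle F'(u),\partial_x v\rangle+\langle F'(v),\partial_x u\rangle$ coming from $\partial_x$ of the potential energy.

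\emph{Main obstacle.} The delicate point is consistency of the constants when $\omega_1\ne\omega_2$. The leading term involves $e^{-\lambda_2|z|}$, while $q_{\omega_1}$ decays at rate $\lambda_1$, so matching $g$ to $g_0q_{\omega_1}$ requires the weight $e^{\lambda_2x}$ to be integrable against $\Phi_{\omega_1}^3$. That holds because $3\lambda_1>\lambda_2$ whenever $\lambda_1$ and $\lambda_2$ are comparable. It also requires interpreting the error $r^{-1}e^{-\sqrt{\omega_\star}r}$ generously enough to absorb the mismatch between $e^{-\lambda_1 r}$ and $e^{-\lambda_2 r}$. Since both are smaller than $e^{-9\sqrt{\omega_\star}r}$, both the main terms and their difference fit inside the stated error. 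I would therefore verify \eqref{3.5} simply by bounding both $g$ and $g_0q_{\omega_1}$ by $e^{-\min\lambda_k r}$, which the crude constant $\omega_\star$ permits.
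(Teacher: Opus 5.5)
Your proposal is correct, but it closes the lemma by a different route from the paper. The mechanics for (i) and the reduction in (iii) are the paper's: expand $G$, drop the terms quadratic in $R_2$ via (i) and Cauchy--Schwarz, take $g$ to be (up to $c_1$) the linear interaction integral $\int\partial_x(\Phi_{\omega_1}^3)(y)\,\Phi_{\omega_2}(y+z)\,dy$ (your integrated-by-parts form is the same quantity), and read off the form $\frac{z}{|z|}g(|z|)$ from parity.

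The real difference is how (ii) and \eqref{3.5} are closed.
\begin{itemize}
\item The paper splits at $|y|=\frac34|z|$ and replaces $\Phi_{\omega_2}(y+z)$ by $\kappa e^{-\lambda_2|y+z|}$. It then identifies $\kappa e^{-\lambda_2|z|}\int\Phi_{\omega_1}^3e^{-\lambda_2 y}\,dy$ with $c_1g_0q(|z|)$. This is not valid at leading order in general, because the exponentials decay at different rates: $\lambda_2$ for the actual interaction, $1$ in $g_0$ and $q$, and $\lambda_1$ in $q_{\omega_1}$.
\item You observe that $\sqrt{\omega_\star}=\frac19\min_k\lambda_k$. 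Hence every quantity involved, the putative main terms included, is $O\bigl((1+r)e^{-9\sqrt{\omega_\star}r}\bigr)$. That already lies inside both $r^{-1}e^{-\sqrt{\omega_\star}r}$ and $e^{-6\sqrt{\omega_\star}r}$.
\end{itemize}
Your observation is what actually proves the lemma as stated. It also disposes of the oscillating factor $\cos((\omega_1-\omega_2)t)$ and of \eqref{5}. The price is that it shows the lemma carries no leading-order content. So it cannot supply the law $\dot\theta_k\approx\pm g_0 q(|z|)$ with $o(t^{-2})$ error that Lemma~\ref{modu1} and the $2\log t$ separation rely on. That would need errors of lower order than the interaction itself, and a constant computed at the correct decay rate.

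Some smaller points.
\begin{itemize}
\item Splitting at the midpoint only yields $e^{-\min(a,b)|z|}$. The bound $(1+|z|)e^{-2\min(a,b)|z|}$ comes from using $|x-z_1|+|x-z_2|\ge|z|$ directly. Either suffices given the slack, and your version is more careful than the paper's, whose step $\int e^{-a|y|-a|y+z|}dy\le e^{-a|z|}\int e^{-2a|y|}dy$ drops a factor $|z|$.
\item Your symmetry argument for \eqref{5} does not work literally when $\omega_1\neq\omega_2$. Swapping the solitons swaps $\omega_1$ and $\omega_2$, which changes $g$. The identity from $\int\partial_x[F(R_1+R_2)-F(R_1)-F(R_2)]=0$ only restates that $\langle G,\partial_xR_k\rangle=-\langle f(R_k),\partial_xR_l\rangle$ up to terms quadratic in $R_l$; it does not relate the $(1,2)$ and $(2,1)$ interaction integrals.
\item Similarly, $3\lambda_1>\lambda_2$ is not implied by the hypotheses.
\item In both of the previous two places it is your crude bound that closes the argument, and you should say so explicitly.
\item Like the paper, you tacitly take $j\neq k$ in the second estimate of (i), which is false for $j=k$. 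You also tacitly assume $g_0<\infty$, which requires $3\sqrt{1-\omega_1^2}>1$.
\end{itemize}
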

\begin{proof}

(i)  Let us consider $j=k=1$, the other cases are similar, from Proposition \ref{decaimentoquadratico} we have,  $|\Phi_{\omega}(y)| \lesssim e^{-\frac{2}{3}(\sqrt{1-\omega^2})|y|}$, and thus for $y=x-z_1$
$$
\begin{aligned}
\int_{\R}\left|\Phi_{1}^{(1)}\Phi_{2}^{(1)}\right|^{2} \mathrm{~d} y =\int_{\R}\left|\Phi_{\omega_1}(x-z_1)\Phi_{\omega_2}(x-z_2)\right|^{2} \mathrm{~d} y &\leq \int_{\R} e^{-\frac{2}{3}\sqrt{1-\omega_1^2}|y|} e^{-\frac{2}{3}\sqrt{1-\omega_2^2}|y+z|} \mathrm{d} y \\
&\leq\int_{\R} e^{-6\sqrt{\omega_{\star}}|y|} e^{-6\sqrt{\omega_{\star}}|y+z|} \mathrm{d} y\\
&\leq e^{-6\sqrt{\omega_{\star}}|z|} \int_{\R} e^{-12\sqrt{\omega_{\star}}|y|} \mathrm{d} y\\& \leq C e^{-6\sqrt{\omega_{\star}}|z|}.    
\end{aligned}
$$
From which the desired result follows. Note that it is possible to modify the value of $\omega_{\star}$ as desired, so that the interaction between the solitons is as small as needed. A similar procedure can be followed in the case of $\partial_{x}\Phi_{1}^{(1)}\partial_{x} \Phi_{2}^{(1)}$.

%Now, by Taylor's expansion, it follows that
%$$
%\left|F\left(\Phi_{1}^{(1)}+\Phi_{2}^{(1)}\right)-F\left(\Phi_{1}^{(1)}\right)-F\left(\Phi_{2}^{(1)}\right)-f\left(\Phi_{1}^{(1)}\right) \Phi_{2}^{(1)}-f\left(\Phi_{2}^{(1)}\right) \Phi_{1}^{(1)}\right| \lesssim\left|\Phi_{1}^{(1)} \Phi_{2}^{(1)}\right|^{\frac{3}{2}}.
%$$
%So, it only remains to proceed as in the previous case to obtain the desired result.
%(ii)  Let $m>0$, then
%$$
%\begin{aligned}
%\int_{\R}\left|\Phi_{1}^{(1)}\right|\left|\Phi_{2}^{(1)}\right|^{1+m} \mathrm{~d} x&=\int_{\R} \Phi_{\omega_1}(y-z) \Phi_{\omega_2}^{1+m}(y) \mathrm{d} y 
 %\\
%& \quad \lesssim q(|z|) \int_{|y|<\frac{3}{4}|z|} e^{|y|} \Phi^{1+m}(y) \mathrm{d} y + e^{-|z|} \int_{|y|>\frac{3}{4}|z|} e^{|y|} \Phi ^{1+m}(y) \mathrm{d} y \\
%&  \quad \lesssim q(|z|).
%\end{aligned}
%$$

(ii)  We claim the following estimate
\begin{equation}\label{1}
\left|\int_{\R} \Phi_{\omega_1}^{3}(y) \Phi_{\omega_2}
(y+z) \mathrm{d} y- c_1g_{0} \kappa e^{-\sqrt{1-\omega_2^2}|z|}\right|\lesssim | z|^{-1} e^{-\sqrt{\omega_{\star}}|z|}. 
\end{equation}

Then, from \eqref{1} and \eqref{ground} it follows that $$\left|\left\langle f(\Phi_1^{(1)}),  \Phi_2^{(1)}\right\rangle-c_{1}g_0 q(|z|)\right|\lesssim |z|^{-1} e^{-\sqrt{\omega_{\star}}|z|}.$$

Then we proceed to test \eqref{1}. First, for $|y|<\frac{3}{4}|z|$ (and so $|y+z| \geq|z|-|y| \geq \frac{1}{4}|z| \gg 1$ ), we have using \eqref{ground},
$$
|\Phi_{\omega_2}(y+z)-\kappa e^{-(1-\omega_2^2)^{1/2}|y+z|}|\lesssim |y+z|^{-1}e^{-(1-\omega_2^2)^{1/2}|y+z|} \lesssim |z|^{-1}e^{-(\omega_{\star})^{1/2}|z|} e^{(\omega_{\star})^{1/2}|y|} 
$$
In particular,
\begin{equation}\label{2}
\left|\int_{|y|<\frac{3}{4}|z|} \Phi_{\omega_1}^{3}(y)\left[\Phi_{\omega_2}(y+z)-\kappa e^{-(1-\omega_2^2)^{1/2}|y+z|}\right] \mathrm{d} y\right| \lesssim |z|^{-1}e^{-(\omega_{\star})^{1/2}|z|}.
\end{equation}
Moreover, for $|y|<\frac{3}{4}|z|$, we have the expansions
$$
\begin{aligned}
 \left||y+z|-|z|-\frac{y \cdot z}{|z|}\right| \lesssim|z|^{-1}|y|^{2},
\end{aligned}
$$
since,  using the identity
\[
|y+z|^2 = |z|^2 + 2 y \cdot z + |y|^2,
\]
we obtain
\[
|y+z| = \sqrt{|z|^2 + 2 y \cdot z + |y|^2}.
\]
We can factor
\[
|y+z| = |z| \sqrt{1 + 2\frac{y \cdot z}{|z|^2} + \frac{|y|^2}{|z|^2}}.
\]
Since \( |y| < \frac{3}{4}|z| \), the argument inside the square root lies in the interval \((\frac{1}{4}, \frac{9}{4})\), where the square root function is smooth, and we can apply a first-order Taylor expansion:
\[
\left| \sqrt{1+\epsilon} - (1 + \frac{1}{2}\epsilon) \right| \lesssim \epsilon^2
\quad \text{for} \quad |\epsilon| \leq 1,
\]
where
\[
\epsilon = 2\frac{y \cdot z}{|z|^2} + \frac{|y|^2}{|z|^2}.
\]
Thus, we have:
\[
|y+z| = |z| \left(1 + \frac{1}{2} \epsilon + O(\epsilon^2)\right),
\]
which implies
\[
|y+z| - |z| = \frac{y\cdot z}{|z|} + \frac{|y|^2}{2|z|} + O\left( \frac{|y|^4}{|z|^3} \right).
\]
Therefore,
\[
|y+z| - |z| - \frac{y\cdot z}{|z|} = \frac{|y|^2}{2|z|} + O\left( \frac{|y|^3}{|z|^2} \right),
\]
and finally,
\[
\left||y+z| - |z| - \frac{y\cdot z}{|z|}\right| \lesssim \frac{|y|^2}{|z|},
\]
and so, note that
\[
|y+z| = |z| + \frac{y \cdot z}{|z|} + R(y,z),
\]
where the remainder \( R(y,z) \) satisfies
\[
|R(y,z)| \lesssim \frac{|y|^2}{|z|}
\quad \text{when} \quad |y| \ll |z|.
\]
Thus,
\[
|y+z| = |z| + \frac{y \cdot z}{|z|} + O\left( \frac{|y|^2}{|z|} \right),
\]
and consequently,
\[
-|y+z| = -|z| - \frac{y\cdot z}{|z|} + O\left( \frac{|y|^2}{|z|} \right).
\]
Applying the exponential function, we obtain
\[
e^{-|y+z|} = e^{-|z|-\frac{y \cdot z}{|z|}} e^{O\left( \frac{|y|^2}{|z|} \right)}.
\]
Using the Taylor expansion for small \(\eta\), 
\[
e^{\eta} = 1 + \eta + O(\eta^2),
\quad \text{thus} \quad |e^{\eta} - 1| \lesssim |\eta|,
\]
we deduce that
\[
e^{O\left( \frac{|y|^2}{|z|} \right)} = 1 + O\left( \frac{|y|^2}{|z|} \right).
\]
Therefore,
\[
e^{-|y+z|} = e^{-|z|-\frac{y \cdot z}{|z|}} \left( 1 + O\left( \frac{|y|^2}{|z|} \right) \right),
\]
and
\[
e^{-|y+z|} - e^{-|z|-\frac{y \cdot z}{|z|}} = e^{-|z|-\frac{y \cdot z}{|z|}} O\left( \frac{|y|^2}{|z|} \right).
\]
Thus,
\[
\left| e^{-|y+z|} - e^{-|z|-\frac{y \cdot z}{|z|}} \right| \lesssim |z|^{-1} e^{-|z|} e^{-\frac{y \cdot z}{|z|}} |y|^2.
\]
Now, since \( y \cdot z / |z| \) is of size comparable to \( |y| \) when \( |y| \ll |z| \), we have
\[
e^{-\frac{y \cdot z}{|z|}} \lesssim e^{|y|}.
\]
Hence,
\[
\left| e^{-|y+z|} - e^{-|z|-\frac{y \cdot z}{|z|}} \right| \lesssim |z|^{-1} e^{-|z|} (1 + |y|^2) e^{|y|},
\]
$$
\left| e^{-|y+z|}- e^{-|z|-\frac{y \cdot z}{|z|}}\right| \lesssim |z|^{-1} e^{-|z|}\left(1+|y|^{2}\right) e^{|y|} .
$$
Inserted into \eqref{2}, this yields
$$
\left|\int_{|y|<\frac{3}{4}|z|} \Phi_{\omega_1}^{3}(y)\left[\Phi_{\omega_2}(y+z)-\kappa e^{\sqrt{1-\omega_{2}^2}(-|z|-\frac{y \cdot z}{|z|})}\right] \mathrm{d} y\right| \lesssim|z|^{-1} e^{-\sqrt{\omega_{\star}}|z|} .
$$
Next, using \eqref{ground}, we observe
$$
\int_{|y|>\frac{3}{4}|z|} \Phi_{\omega_1}^{3}(y) \Phi_{\omega_2}(y+z) \mathrm{d} y \lesssim e^{-3 \sqrt{\omega_{\star}}|z|},
$$
and
$$
\int_{|y|>\frac{3}{4}|z|} \Phi_{\omega_1}^{3}(y) e^{\sqrt{1-\omega_2^2}(-|z|-\frac{y \cdot z}{|z|})} \mathrm{d} y \lesssim e^{-\sqrt{\omega_{\star}}|z|} \int_{|y|>\frac{3}{4}|z|} \Phi_{\omega_1}^{3}(y) e^{\sqrt{\omega_{\star}}|y|} \mathrm{d} y \lesssim e^{-3 \sqrt{\omega_{\star}}|z|} .
$$
Gathering these estimates, we have proved
$$
\left|\int_{\R} \Phi_{\omega_1}^{3}(y) \Phi_{\omega_2}(y+z) \mathrm{d} y-\kappa e^{-\sqrt{1-\omega_{2}^2}|z|} \int_{\R} \Phi_{\omega_1}^{3}(y) e^{\sqrt{1-\omega_{2}^2}(-\frac{y \cdot z}{|z|})} \mathrm{d} y \right|\lesssim \left| z\right|^{-1} e^{-\sqrt{\omega_{\star}}|z|}.
$$
Hence, the identity $\int_{\R} \Phi_{\omega_1}^{3}(y) e^{-\frac{y \cdot z}{| z |}} \mathrm{~d} y=\int_{\R} \Phi_{\omega_1}^{3}(y) e^{-y} \mathrm{~d} y$ (recall that $\Phi$ is radially symmetric) and the definition of $g_{0}$ imply \eqref{1}.

(iii)   First, using the Taylor  expansion, it holds
$$
\left|G-3\left| R_{1}^{(1)}\right|^{2} R_{2}^{(1)}\right|\lesssim |R_{2}^{(1)}|^{2}+\left|R_{2}^{(1)}\right|^{2}\left|R_{1}^{(1)}\right| .
$$
where $G=f(R_{1}^{(1)}+R_2^{(1)})-f(R_1^{(1)})-f(R_2^{(1)}).$  
Thus, using $(i)$, we obtain 
\begin{equation}\label{3}
\left|\left\langle G, \partial_{x} R_{1}^{(1)}\right\rangle- H(z)\right| \lesssim \int_{\R} |\Phi_{\omega_1}(y) \Phi_{\omega_2}(y+z)|^2 \mathrm{d} y \lesssim e^{-6\sqrt{\omega_{\star}}|z|},
\end{equation}
where we set $$H(z)=\int_{\R} \partial_{x}\Phi_{\omega_1}^{3}(y) \Phi_{\omega_2}(y+z) \mathrm{d} y.$$ Second, we claim that there exists a function $g:[0, \infty) \rightarrow \mathbb{R}$ such that $H(z)=c_{1} \frac{z}{|z|} g(|z|)$. 
Indeed, remark by using $z+r>0$
$$
\begin{aligned}
H\left(z\right) & =3 \int_{\R} \frac{y}{|y|} q_{\omega_1}^{\prime}(|y|) q_{\omega_1}^{2}(|y|) q_{\omega_2}\left(\left|y+z\right|\right) \mathrm{d} y \\
& =3 \int_{\R} \frac{y}{|y|} q_{\omega_1}^{\prime}(|y|) q_{\omega_1}^{2}(|y|) q_{\omega_2}\left(\left|y+z\right|\right) \mathrm{d} y.
\end{aligned}
$$
Thus, we set
$$
g(r)=\frac{ H\left(z\right)}{c_{1}} \quad \text { so that } \quad H\left(z\right)=c_{1} g(r).
$$
Consider  $y=x\frac{z}{|z|}$, then
$$
\begin{aligned}
H(z) & =3 \int_{\R} \frac{y}{|y|} q_{\omega_1}^{\prime}(|y|) q_{\omega_1}^{2}(|y|) q_{\omega_2}(|y+z|) \mathrm{d} y \\
& =3\frac{z}{|z|} \int_{\R} \frac{x}{|x|} q_{\omega_1}^{\prime}(|x|) q_{\omega_1}^{2}(|x|) q_{\omega_2}\left(\left|x+|z| \right|\right) \mathrm{d} x\\
&= H\left(|z| \right)=c_{1} \frac{z}{|z|} g(z)
\end{aligned}
$$
Together with \eqref{3}, this proves \eqref{4}. The proof of \eqref{5} is the same. Finally, proceeding as in the proof of \eqref{1}, we have that $$\left|H(r)-\kappa e^{-\sqrt{1-\omega_2^2}r}\int_{\R}\partial_{y}\Phi_{\omega_1}(y)e^{-y}dy\right|\lesssim r^{-1}e^{-\sqrt{\omega_{\star}}r}.$$ Therefore, applying integration by parts we follow \eqref{3.5}.
 
\end{proof}
%%%%%%%%%%%%%%%%%%%%%%%%%%%%%%%%%%%%%%%%%
Notice that the solutions $\vec u(t)$ of \eqref{system2} with the initial data $\vec u_0=(u_0,\rho_0,v_0,n_0)^T$ can be equivalently found from the following integral equality
   \begin{equation}\label{integral-form}
       \vec u(t)=\begin{pmatrix}
           u(t)\\
           \rho(t)\\
           v(t)\\n(t)
       \end{pmatrix}
       =\bm{G}(t)\vec u_0+\int_0^t\bm{G}(t-t')\vec F(\vec u(t'))\dd t',
   \end{equation}
   where $\bm{G}={\rm diagonal}(\bm{G}_1(t),\bm{G}_2(t))$,
   \[  
   \widehat{\bm{G}}_1(t)=
   \begin{pmatrix}
    \cos(t\sqrt{1+\xi^2})&-\frac{1}{ \sqrt{1+\xi^2}}\sin(t\sqrt{1+\xi^2})\\
    \sqrt{1+\xi^2}\sin(t\sqrt{1+\xi^2})&\cos(t\sqrt{1+\xi^2})
   \end{pmatrix},
   \]
   \[
   \widehat{\bm{G}}_2(t)=
   \begin{pmatrix}
    \cos(t\xi)&\ii\sin(t\xi)\\
    \ii\sin(t\xi)&\cos(t\xi)
   \end{pmatrix},
   \]
   and
  $\vec F(\vec u)=(0,\al uv+\beta|u|^2u,0,(|u|^2)_x)^T$.
  \begin{theorem}\label{wellpo}
   Let $s>1/2$, $r\in\rr$ and $Y^s=H^s(\rr)\,(\text{or}\,\dot{H}^s(\rr))$.  For any initial data $\vec u_0\in X^{s,r} :=Y^{s} \times Y^{s-1} \times Y^{r} \times Y^{r}  $, there exists a time existence $T>0$ such that \eqref{system2} is well-posed in $X^{s,r}$ provided that   $(r,s)\in \sett{r< s, \;r+1/2\leq2s}\cup  \sett{r< s, \;r+1/2\leq2s}$ and
   $(r,s)\in \sett{s\leq r+1, \;r>-1/2}\cup  \sett{s< r+1, \;r\geq-1/2}$. 
      Moreover, $Q_1$, $Q_2$, and the energy functional  $E$ are time-invariant, where
     \begin{equation}\label{ener1}
E( \vec u)=\int_{\R}\left(|u|^{2}+\left|\rho^{2}\right|+\left|u_{x}\right|^{2}+\alpha|u|^{2} v+\frac{\beta}{2}|u|^{4}+\frac{\alpha}{2} v^{2}+\frac{\alpha}{2} n^{2}\right) \, d x \end{equation}
\begin{equation}\label{mass1}
Q_1(\Vec{u})=2 \Re \int_{\R} u_{x} \bar{\rho}\,dx-\alpha \int_{\R} n v \,d x
\end{equation}
and
\begin{equation}\label{mass2}
Q_2(\Vec{u})=2 \Im \int_{\R} \bar{u} \rho \,d x.
\end{equation}
for the solution $\vec u=(u,\rho,v,n)^T$.
  \end{theorem}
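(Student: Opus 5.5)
Local well-posedness in $X^{s,r}$ will follow from a contraction argument on the Duhamel formulation \eqref{integral-form}, and the conservation laws from an energy/regularization argument. I first decouple the linear part. The Klein--Gordon block $\bm G_1(t)$ is bounded on $H^s\times H^{s-1}$ uniformly for $|t|\le T$, since its symbol entries are bounded by $1$ after weighting by $\langle\xi\rangle^{\pm1}$. The wave block $\bm G_2(t)$ is unitary on $H^r\times H^r$ (or $\dot H^r\times\dot H^r$). Hence $\|\bm G(t)\vec u_0\|_{X^{s,r}}\lesssim \|\vec u_0\|_{X^{s,r}}$ for $|t|\le1$. Setting $\Gamma(\vec u)(t)=\bm G(t)\vec u_0+\int_0^t\bm G(t-t')\vec F(\vec u(t'))\,\dd t'$ on $C([0,T];X^{s,r})$, it suffices to prove
\[
\|\al uv+\beta|u|^2u\|_{Y^{s-1}}+\|(|u|^2)_x\|_{Y^{r}}\lesssim \|u\|_{Y^s}\|v\|_{Y^r}+\|u\|_{Y^s}^3+\|u\|_{Y^s}^2 ,
\]
together with the analogous difference estimates.

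The nonlinear estimates are where the constraints on $(r,s)$ come from. For $|u|^2u$ I will use that $H^s$ is an algebra when $s>1/2$, so $\||u|^2u\|_{H^{s-1}}\le\||u|^2u\|_{H^s}\lesssim\|u\|_{H^s}^3$. For $uv$ I will use the one-dimensional product estimate $\|fg\|_{H^{\sigma}}\lesssim\|f\|_{H^{s_1}}\|g\|_{H^{s_2}}$, valid when $s_1+s_2\ge0$, $\sigma\le\min(s_1,s_2)$ and $s_1+s_2-\sigma>1/2$ (with the usual strict inequalities at endpoints). Taking $\sigma=s-1$, $s_1=s$, $s_2=r$, this requires $s-1\le r$ and $r>-1/2$ or the corresponding endpoint variant, which is exactly the second set of conditions. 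For $(|u|^2)_x$ I need $\||u|^2\|_{H^{r+1}}\lesssim\|u\|_{H^s}^2$. This holds if $r+1\le s$ (algebra), or more generally if $r+1<2s-1/2$ by the product rule, giving the condition $r+1/2\le 2s$ together with $r<s$.

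This last point is the main obstacle. The derivative loss in $(|u|^2)_x$ is not recovered by the wave group in a pure energy-space argument: $\bm G_2$ does not smooth. Where the product estimate fails, I would replace the fixed-point space by Bourgain spaces $X^{s,b}$ adapted to $\tau=\pm\langle\xi\rangle$ and $\tau=\pm\xi$, following \cite{hss2013} and \cite{ott}. The transversality of the Klein--Gordon and wave characteristics then yields the needed bilinear gain. With this in place, standard contraction gives existence, uniqueness and Lipschitz dependence for $T\sim(1+\|\vec u_0\|_{X^{s,r}})^{-k}$, and the homogeneous case $\dot H$ is treated identically at the symbol level.

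For the invariance of $E$, $Q_1$ and $Q_2$, I would first take smooth, rapidly decaying data, for which the solution is smooth by persistence of regularity. I then differentiate in time using \eqref{system2} and integrate by parts. For $Q_2$, $\frac{d}{dt}\Im\int\bar u\rho=\Im\int(-|\rho|^2+\bar u(-u_{xx}+u+\al uv+\beta|u|^2u))=0$ because every term is real. $Q_1$ is the momentum: its derivative is a total derivative once one uses $v_t=n_x$ and $n_t=v_x+(|u|^2)_x$. $E$ is handled similarly, with the cross term $\al|u|^2v$ canceling against the $(|u|^2)_x$ forcing. Finally, I extend the conservation laws to $X^{s,r}$ data by density and continuous dependence, in the range where the functionals are continuous on $X^{s,r}$ (in particular on the energy space $X$).
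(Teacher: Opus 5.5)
There is a genuine gap in your treatment of $(|u|^2)_x$. The claim that $\||u|^2\|_{H^{r+1}}\lesssim\|u\|_{H^s}^2$ holds ``more generally if $r+1<2s-1/2$ by the product rule'' is false, and it contradicts your own condition $\sigma\le\min(s_1,s_2)$: multiplication never raises Sobolev regularity above that of the factors. Concretely, take a real $f\in H^s\setminus H^{r+1}$ and $\chi\in C_c^\infty$ with $\chi\equiv1$ on the support of $f$. Then $\chi\pm f\in H^s$, but $|\chi+f|^2-|\chi-f|^2=4f\notin H^{r+1}$, so the bound fails for every $r>s-1$. (The arithmetic is also off: $r+1<2s-\frac12$ is not $r+\frac12\le 2s$.) Your $uv$ estimate needs $r\ge s-1$, so the contraction in $C([0,T];X^{s,r})$ closes exactly on the line $r=s-1$. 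For $s>1/2$ the hypotheses of the theorem reduce to the strip $s-1\le r<s$: the condition $r+\frac12\le 2s$ is implied by $r<s$, and $r>-\frac12$ is implied by $r\ge s-1$. Your self-contained argument therefore covers only the lower edge of the strip. That edge does include the energy-level case $(s,r)=(1,0)$, and there your argument is more explicit than the paper's.

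In the interior $s-1<r<s$, the wave Duhamel term must gain $r+1-s>0$ derivatives on $|u|^2$, and no fixed-time estimate can provide that. What is needed is a space--time bilinear estimate for $\partial_x(u\bar u)$ in a wave-adapted Bourgain space, with a compatible estimate for $uv$ in the Klein--Gordon-adapted space. Such estimates exploit that the Klein--Gordon group velocity satisfies $|\xi|/\sqrt{1+\xi^2}<1$. You only name this step, and the reference you lean on does not supply it: by the paper's own account, \cite{hss2013} treats $\beta=0$ with data in $H^s\times H^{s-1}\times H^{s-1}\times H^{s-2}$, which is precisely $r=s-1$. The paper obtains the stated range by combining \cite[Theorem 1]{hss2013} with \cite[Theorem 1.1]{Nakoz}. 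Off the line $r=s-1$, your proof is thus no more than a citation, and it misattributes where the conditions on $(r,s)$ come from.

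Your claim that the $\dot H$ case is ``identical at the symbol level'' is also wrong. The entry $\sqrt{1+\xi^2}\sin(t\sqrt{1+\xi^2})$ of $\widehat{\bm G}_1$ maps $\dot H^s$ into $\dot H^{s-1}$ only if $|\xi|^{-1}\sqrt{1+\xi^2}\,|\sin(t\sqrt{1+\xi^2})|$ is bounded, and this fails as $\xi\to0$ for $t\notin\pi\mathbb{Z}$.

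Your verification of the conservation laws is correct. The terms $\pm\alpha\int|u|^2n_x$ cancel in $\frac{d}{dt}E$, and the terms $\pm\alpha\int(|u|^2)_xv$ cancel in $\frac{d}{dt}Q_1$; this is a cancellation rather than a total derivative. The density argument is also correct in the range where these functionals are continuous.
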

  \begin{proof}
According the unitary group of $\bm{G}$, the proof is derived by combining the results given in \cite[Theorem 1]{hss2013} and
 \cite[Theorem 1.1]{Nakoz}.
\end{proof}
 % \begin{remark}
 %    The above theorem allows us to infer that system \eqref{system2} is well-posed in $X$.
 %\end{remark} 

\section{Coercivity property}\label{section-coer}
In this section, we aim to establish key results essential for the development of our findings. Specifically, we will begin by introducing certain quantities conserved by the system's flow and subsequently construct a coercive property associated with our system, allowing us to control future terms to be analyzed.

Note that \eqref{system2} can be written as:
\begin{equation}\label{hamilt}
\frac{d \Vec{u}}{d t}=J E^{\prime}(\Vec{u}) ,
\end{equation}
where $\Vec{u}=(u, \rho, v, n)$, $J$ is a skew-symmetrically linear operator defined by
$$
J=\left(\begin{array}{cccc}
0 & -\frac{1}{2} & 0 & 0 \\
\frac{1}{2} & 0 & 0 & 0 \\
0 & 0 & 0 & \frac{1}{\alpha} \partial_x \\
0 & 0 & \frac{1}{\alpha} \partial_x & 0
\end{array}\right)
$$

and

\begin{equation}\label{energy1}
 E^{\prime}(\Vec{u})=\left(\begin{array}{c}
-2 u_{x x}+2 u+2 \alpha u v+2 \beta|u|^{2} u  \\
2 \rho \\
\alpha|u|^{2}+\alpha v \\
\alpha n
\end{array}\right) . 
\end{equation}

Differentiating \eqref{energy1} with respect to $\vec u$, we have
\begin{equation}\label{energy2}
 E^{\prime \prime}(\Vec{u}) \vec{\eta}=\left(\begin{array}{c}
\left(-\partial_{x}^{2}+2+2 \alpha v+2 \beta|u|^{2}\right) \eta_{1}+4 \beta u R e\left(u \overline{\eta_{1}}\right)+2 \alpha u \eta_{3}  \\
2 \eta_{2} \\
2 \alpha \Re\left(u \overline{\eta_{1}}\right)+\alpha \eta_{3} \\
\alpha \eta_{4}
\end{array}\right)   
\end{equation}
where   $\vec{\eta}=\left(\eta_{1}, \eta_{2}, \eta_{3}, \eta_{4}\right)$.
Now, following what was stated in the previous theorem, we have these two quantities $R_{1}^{(1)}$ and $Q_{2}$ conserved by the flow, moment and mass respectively, 

\begin{equation*}\label{mom1}
Q_1(\Vec{u})=2 \Re \int_{\R} u_{x} \bar{\rho}\,dx-\alpha \int_{\R} n v \,d x
\end{equation*}
and
\begin{equation*}\label{mom2}
Q_2(\Vec{u})=2 \Im \int_{\R} \bar{u} \rho \,d x.
\end{equation*}
Hence, for any $t \in \R$, we have $\Vec{u}(t)$ is a flow of \eqref{system2},
\begin{equation*}\label{mom1and2}
Q_1(\Vec{u}(t))=Q_1(\Vec{u}(0)) \, \, \text { and } \, \,  Q_2(\Vec{u}(t))=Q_2(\Vec{u}(0)).
\end{equation*}
Differentiating \eqref{mom1and2} with respect to $\Vec{u}$, respectively, we have
$$
Q_1^{\prime}(\Vec{u})=\left(\begin{array}{c}
-2 \rho_{x} \\
2 u_{x} \\
-\alpha n \\
-\alpha v
\end{array}\right), \quad Q_1^{\prime \prime}(\Vec{u})=\left(\begin{array}{cccc}
0 & -2 \partial_x & 0 & 0 \\
2 \partial_x & 0 & 0 & 0 \\
0 & 0 & 0 & -\alpha \\
0 & 0 & -\alpha & 0
\end{array}\right)
$$
and
$$
Q_2^{\prime}(\Vec{u})=\left(\begin{array}{c}
-2 i \rho  \\
2 i u \\
0 \\
0
\end{array}\right), \quad Q_2^{\prime \prime}(\Vec{u})=\left(\begin{array}{cccc}
0 & -2 i & 0 & 0 \\
2 i & 0 & 0 & 0 \\
0 & 0 & 0 & 0 \\
0 & 0 & 0 & 0
\end{array}\right).
$$

			Now, we define some operators that will be used later on. Indeed,
			let us $\mathcal{S}_{j}:  X \to \R$ the functional defined by
			\begin{equation*}\label{defSj}
				\mathcal{S}_{j}(\Vec{u}):=E(\Vec{u})-\omega_j Q_2(\Vec{u}), \, \, \quad \text{$j=1,2$}.
			\end{equation*}

			Note that by the Sobolev embedding, these functionals are well-defined.

    We consider $\Vec{\Phi}_j=(\Phi_{j}^{(1)}, \Phi_{j}^{(2)}, \Phi_{j}^{(3)}, \Phi_{j}^{(4)})$ as in \eqref{standing}, %revisar un poco la escritura aqui, por que depronto podemos mejorar como expresar la idea
then it follows from \eqref{system3} that
\begin{equation*}\label{pointcrit}
  \mathcal{S}_j^{\prime}(\Vec{\Phi}_j)=\left(\begin{array}{c}
2 \Phi_{j}^{(1)}-2 \partial_{xx}\Phi_{j}^{(1)}+2 \alpha \Phi_{j}^{(1)} \Phi_{j}^{(3)}+2 \beta|\Phi_{j}^{(1)}|^{2} \Phi_{j}^{(1)}+2 i \omega_j \Phi_{j}^{(2)} \\
2\Phi_{j}^{(2)}-2 i \omega_j \Phi_{j}^{(1)} \\
\alpha|\Phi_{j}^{(1)}|^{2}+\alpha \Phi_{j}^{(3)} \\
\alpha \Phi_{j}^{(4)}
\end{array}\right)=\left(\begin{array}{c} 0 \\0 \\0 \\0 \end{array}\right).  
\end{equation*}
Now we define, for all $j \in \{1,2\}$, an operator $H_{j}$ from $X$ to $X^{\star}$ by
\begin{equation*}\label{defHj}
H_{j}=\mathcal{S}_j^{\prime \prime}(\Vec{\Phi}_{j}), 
\end{equation*}
 that is,
$$
\begin{aligned}
H_{j} \vec{\eta}&=  \left(\begin{array}{c}
2\left(-\partial_{x}^{2}+1+\alpha \Phi_{j}^{(3)}+\beta|\Phi_{j}^{(1)}|^{2}\right) \eta_{1}+4 \beta \Phi_{j}^{(1)} \Re\left(\Phi_{j}^{(1)} \overline{\eta_{1}}\right)+2 \alpha \Phi_{j}^{(1)} \eta_{3}+2 i \omega_j \eta_{2} \\
2\left(\eta_{2}-i \omega_j \eta_{1}\right) \\
2 \alpha \Re\left(\Phi_{j}^{(1)}\overline{\eta_{1}}\right)+\alpha \eta_{3} \\
\alpha \eta_{4}
\end{array}\right), 
\end{aligned}
$$
where $\vec{\eta}=\left(\eta_{1}, \eta_{2}, \eta_{3}, \eta_{4}\right)$. 

Then based on Lemma 2 of \cite{yin}, we can establish the following result. 
 
\begin{lemma}\label{kernel}
We have for system \eqref{system1} that, for all $j \in \{1,2\}$, 
    $\mathcal{S}_j^{\prime}(\vec{\Phi}_{j})=0$  
    and $$Ker(H_j)=\text{span}\{\partial_{x}\vec{\Phi}_{j},\vec{\Upsilon}_{{j}}\},
    $$
    where $\vec{\Upsilon}_{j}=(i \Phi_{j}^{(1)}, -\omega_j \Phi_{j}^{(1)},0,0) $  and $\mathcal{S}_j, H_j$ are defined by \eqref{defSj} and \eqref{defHj}, respectively.
\end{lemma}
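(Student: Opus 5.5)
The plan has two parts: the critical point property, then the kernel. The first part, $\mathcal{S}_j'(\vec\Phi_j)=0$, is a direct check. Substituting $\Phi_j^{(2)}=\ii\omega_j\Phi_j$, $\Phi_j^{(3)}=-|\Phi_j|^2$ and $\Phi_j^{(4)}=0$ into the displayed expression for $\mathcal{S}_j'$ gives the following. The second component is $2\ii\omega_j\Phi_j-2\ii\omega_j\Phi_j=0$. The third is $\alpha|\Phi_j|^2-\alpha|\Phi_j|^2=0$. The fourth vanishes trivially. The first component becomes $2\big(-\Phi_j''+(1-\omega_j^2)\Phi_j+(\beta-\alpha)\Phi_j^3\big)$, because $2\ii\omega_j\cdot\ii\omega_j\Phi_j=-2\omega_j^2\Phi_j$ and $\Phi_j$ is real. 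This vanishes by \eqref{omegastate}.

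For the kernel, I would first show that the two listed vectors belong to it. The vector $\partial_x\vec\Phi_j\in\ker H_j$ follows by differentiating the identity $\mathcal{S}_j'(\vec\Phi_j(\cdot-z))=0$ in $z$, using translation invariance of $E$ and $Q_2$. The vector $\vec\Upsilon_j$ is, up to sign, $\frac{d}{d\theta}\big|_{\theta=0}$ of the phase rotation $(e^{\ii\theta}u,e^{\ii\theta}\rho,v,n)$ applied to $\vec\Phi_j$. Here $e^{\ii\theta}\rho$ contributes $\ii\cdot\ii\omega_j\Phi_j=-\omega_j\Phi_j$, which matches $\vec\Upsilon_j$. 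Since $E$ and $Q_2$ are phase invariant, the same differentiation argument applies. Both facts can also be confirmed directly from the formula for $H_j$ as a sanity check.

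For the reverse inclusion, take $\vec\eta\in\ker H_j$ and eliminate variables. The fourth component gives $\eta_4=0$. The third gives $\eta_3=-2\Phi_j\Re\eta_1$. The second gives $\eta_2=\ii\omega_j\eta_1$. Substituting these into the first component and writing $\eta_1=a+\ii b$ with $a,b$ real, the equation splits into real and imaginary parts. The imaginary part becomes $L_-b:=-b''+(1-\omega_j^2)b+(\beta-\alpha)\Phi_j^2b=0$. The real part collects $2\beta\Phi_j^2a$ and $-4\alpha\Phi_j^2a$ (from $\eta_3$) together with the terms $-\alpha|\Phi_j|^2a$ and $\beta\Phi_j^2 a$ already present. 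Assembling them should give $L_+a:=-a''+(1-\omega_j^2)a+3(\beta-\alpha)\Phi_j^2a=0$, the linearization of \eqref{omegastate}. I would redo this bookkeeping carefully, since the coefficients $2\beta$, $4\beta$ and $2\alpha$ in $H_j$ must combine to $3\mathbbm m$ for the rest of the argument to work.

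The key remaining input, and the main obstacle, is the nondegeneracy of the one-dimensional scalar operators. One needs $\ker L_+=\mathrm{span}\{\Phi_j'\}$ and $\ker L_-=\mathrm{span}\{\Phi_j\}$ in $H^1(\R)$. For $L_-$ this is standard: $L_-\Phi_j=0$ by \eqref{omegastate}, $\Phi_j>0$ is then the ground state of $L_-$, and so the kernel is simple. For $L_+$ I would use the explicit $\mathrm{sech}$ profile, for which $L_+$ is a Pöschl–Teller operator with known spectrum. Alternatively, a Wronskian/ODE argument works: $\Phi_j'$ is a decaying solution, and any second solution independent of it grows exponentially at $\pm\infty$, so the kernel is one-dimensional. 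This is essentially the content of Lemma 2 of \cite{yin}. With these two facts, $a=c_1\Phi_j'$ and $b=c_2\Phi_j$. Reconstructing $\eta_2$ and $\eta_3$ from $a$ and $b$ then gives $\vec\eta=c_1\partial_x\vec\Phi_j+c_2\vec\Upsilon_j$, where the third component matches because $\partial_x(-\Phi_j^2)=-2\Phi_j\Phi_j'$. This completes the kernel identification.
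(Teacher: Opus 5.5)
Your proof is correct. The paper gives no argument for this lemma beyond the remark that it follows from Lemma~2 of \cite{yin}. Your reduction is the standard argument behind that citation: eliminate $\eta_4,\eta_3,\eta_2$ from the last three components of $H_j\vec\eta=0$, split the first component into the scalar operators $L_\pm$, and use one-dimensional nondegeneracy. Writing it out is genuinely useful here, because it checks what the citation leaves open: that the extra cubic term $\beta|u|^2u$, absent in \cite{yin}, does not destroy the structure.

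The bookkeeping you flagged does close. With $\eta_2=\ii\omega_j\eta_1$ and $\eta_3=-2\Phi_j\Re\eta_1$, the first component of $H_j\vec\eta$ equals $2\left(-\eta_1''+(1-\omega_j^2)\eta_1+\mathbbm{m}\Phi_j^2\eta_1\right)+4\mathbbm{m}\Phi_j^2\Re\eta_1$. Its real and imaginary parts are exactly $2L_+a$ and $2L_-b$. After dividing by $2$, the contribution of $\eta_3$ is $-2\alpha\Phi_j^2a$ rather than $-4\alpha\Phi_j^2a$, but your conclusion $3\mathbbm{m}$ is right.

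Two small points. First, eliminating $\eta_3$ and $\eta_4$ requires $\alpha\neq0$. For $\alpha=0$, every vector $(0,0,\eta_3,\eta_4)$ lies in $\ker H_j$, so the statement fails. The paper assumes $\alpha\neq0$ only tacitly, through the factor $1/\alpha$ in $J$, and you should state it explicitly.

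Second, you do not need separate arguments for $L_-$ and $L_+$ (ground state, respectively P\"oschl--Teller). Any $H^1$ solution of $-y''+Vy=0$ with $V\to1-\omega_j^2>0$ lies in $H^2$, so it decays together with its derivative. The constant Wronskian of two such solutions therefore vanishes, and each of $\ker L_\pm$ is at most one-dimensional. Together with $L_+\Phi_j'=0$ and $L_-\Phi_j=0$, this gives both kernels at once, with real coefficients $c_1,c_2$ since $a,b$ are real. That is consistent with $H_j$ being only $\mathbb{R}$-linear.
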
 
\begin{lemma}\label{lemma2}
 For $j=1,2$, let $\vec{\Gamma}_{j}=\left(\frac{1}{2}\partial_{\omega} \Phi_{j}^{(1)}, i \frac{1}{2}\omega_j \partial_{\omega} \Phi_{j}^{(1)},-\frac{1}{2} \Phi_{j}^{(1)} \partial_{\omega} \Phi_{j}^{(1)}, 0\right)$. We have
\begin{equation*}
H_j \vec{\Gamma}_{j}=\vec{\Psi}_{j}, 
\end{equation*}
where $\vec{\Psi}_{j}=(2\omega_j \Phi_{j}^{(1)},0,0,0)$.
Then, if $\omega_j \neq 0$, we have

\begin{equation*}
\left\langle H_j \vec{\Gamma}_{j}, \vec{\Gamma}_{j}\right\rangle<0. 
\end{equation*}
\end{lemma}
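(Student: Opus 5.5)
The plan is to get $H_j\vec\Gamma_j=\vec\Psi_j$ by differentiating the critical point identity $\mathcal{S}_j'(\vec\Phi_{\omega})=0$ with respect to $\omega$, and then to evaluate the quadratic form directly. Since $\mathcal{S}_j=E-\omega_j Q_2$ and the identity holds for every admissible frequency $\omega$ (with $\vec\Phi_\omega$ the profile of \eqref{soliton1}), we have $E'(\vec\Phi_\omega)-\omega Q_2'(\vec\Phi_\omega)=0$ for all $\omega$. Differentiating in $\omega$ and evaluating at $\omega=\omega_j$ gives
\[
H_j\,\partial_\omega\vec\Phi_{\omega}\big|_{\omega=\omega_j}=Q_2'(\vec\Phi_j).
\]
By the formula for $Q_2'$, the right-hand side is $(-2i\Phi_j^{(2)},2i\Phi_j^{(1)},0,0)=(2\omega_j\Phi_j^{(1)},2i\Phi_j^{(1)},0,0)$.

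Next I compute $\partial_\omega\vec\Phi_\omega$ from \eqref{soliton1}. It equals $(\partial_\omega\Phi,\ i\Phi+i\omega\partial_\omega\Phi,\ -2\Phi\partial_\omega\Phi,\ 0)$, using that $\Phi$ is real. The term $i\Phi$ in the second slot is exactly what produces the second component $2i\Phi$ on the right-hand side, since the second row of $H_j$ is $2(\eta_2-i\omega_j\eta_1)$.

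To match the stated $\vec\Gamma_j$ I split $\partial_\omega\vec\Phi=2\vec\Gamma_j+(0,i\Phi_j^{(1)},0,0)$. I then verify directly with the explicit formula for $H_j$ that $H_j\vec\Gamma_j=(2\omega_j\Phi_j^{(1)},0,0,0)$, keeping track of the normalization of $\vec\Gamma_j$. This is a row-by-row check:
\begin{itemize}
\item The second row gives $i\omega_j\partial_\omega\Phi-i\omega_j\partial_\omega\Phi=0$.
\item The fourth row is trivially zero.
\item The third row gives $\alpha\Phi\partial_\omega\Phi-\tfrac{\alpha}{2}\Phi\partial_\omega\Phi$. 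For this to vanish, the factor conventions in $\vec\Gamma_j$ must be reconciled with \eqref{energy2}. I would check that the third component of $\partial_\omega\vec\Phi$ is $-2\Phi\partial_\omega\Phi$, so half of it is $-\Phi\partial_\omega\Phi$, and adjust the constant accordingly.
\item The first row is the $\omega$-derivative of \eqref{omegastate}, $L_+\partial_\omega\Phi=2\omega\Phi$, with $L_+=-\partial_{xx}+(1-\omega^2)+3\mathbbm{m}\Phi^2$, once $\Phi^{(3)}=-\Phi^2$ and the coupling term $2\alpha\Phi\eta_3$ are substituted.
\end{itemize}
I expect the main obstacle to be this bookkeeping of constants between $\vec\Gamma_j$ and $E''$.

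For the sign, $\langle H_j\vec\Gamma_j,\vec\Gamma_j\rangle=\langle\vec\Psi_j,\vec\Gamma_j\rangle=\Re\int 2\omega_j\Phi_j\cdot\tfrac12\partial_\omega\Phi_j=\tfrac{\omega_j}{2}\,\partial_\omega\|\Phi_{\omega}\|_{L^2}^2\big|_{\omega_j}$. From the scaling $\Phi_\omega=\sqrt{1-\omega^2}\,\Phi(\sqrt{1-\omega^2}\,x)$ we get $\|\Phi_\omega\|_2^2=(1-\omega^2)^{1/2}\|\Phi\|_2^2$. Hence $\partial_\omega\|\Phi_\omega\|_2^2=-\omega(1-\omega^2)^{-1/2}\|\Phi\|_2^2$, and
\[
\langle H_j\vec\Gamma_j,\vec\Gamma_j\rangle=-\tfrac{\omega_j^2}{2}(1-\omega_j^2)^{-1/2}\|\Phi\|_2^2<0
\]
whenever $\omega_j\neq0$. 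Here I use $|\omega_j|<1$ and $\Phi\not\equiv0$.
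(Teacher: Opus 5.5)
Your argument is correct for the corrected $\vec\Gamma_j$ described in the second paragraph. It reaches the identity by a somewhat different route from the paper.

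\textbf{Comparison with the paper.} The paper differentiates only the scalar profile equation to get $L_+\partial_\omega\Phi_j=2\omega_j\Phi_j$. It then checks $H_j\vec\Gamma_j=\vec\Psi_j$ component by component, and merely asserts $\langle H_j\vec\Gamma_j,\vec\Gamma_j\rangle<0$ without argument.

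You instead differentiate the full critical-point identity $E'(\vec\Phi_\omega)-\omega Q_2'(\vec\Phi_\omega)=0$, which gives $H_j\partial_\omega\vec\Phi_\omega=Q_2'(\vec\Phi_j)=(2\omega_j\Phi_j,2i\Phi_j,0,0)$. Together with $H_j(0,i\Phi_j,0,0)=(-2\omega_j\Phi_j,2i\Phi_j,0,0)$, this already yields $H_j\vec\Gamma_j=\vec\Psi_j$ for $\vec\Gamma_j=\frac12\bigl(\partial_\omega\vec\Phi_\omega-(0,i\Phi_j,0,0)\bigr)$. So your row-by-row check is only a consistency test. Your construction also explains where $\vec\Gamma_j$ comes from and fixes its normalization automatically.

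Your evaluation $\langle\vec\Psi_j,\vec\Gamma_j\rangle=\frac{\omega_j}{2}\frac{d}{d\omega}\|\Phi_\omega\|_{L^2}^2=-\frac{\omega_j^2}{2}(1-\omega_j^2)^{-1/2}\|\Phi\|_{L^2}^2$ uses the scaling $\Phi_\omega=\sqrt{1-\omega^2}\,\Phi(\sqrt{1-\omega^2}\,\cdot)$. It supplies exactly the step the paper leaves out.

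\textbf{The third component.} State the correction outright rather than saying you would ``adjust the constant''. The third entry $-\frac12\Phi_j\partial_\omega\Phi_j$ of $\vec\Gamma_j$ as written in the statement is off by a factor $2$. With that entry, one gets $H_j\vec\Gamma_j=\vec\Psi_j+\bigl(\alpha\Phi_j^2\partial_\omega\Phi_j,0,\frac{\alpha}{2}\Phi_j\partial_\omega\Phi_j,0\bigr)$. The quadratic form then gains the term $+\frac{\alpha}{4}\int\Phi_j^2(\partial_\omega\Phi_j)^2$, which can dominate (e.g.\ for $\alpha>0$ with $\alpha-\beta$ small).

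So both claims need the entry $-\Phi_j\partial_\omega\Phi_j$. This is exactly what your splitting $\partial_\omega\vec\Phi_\omega=2\vec\Gamma_j+(0,i\Phi_j,0,0)$ produces, and what the paper's own third-row line tacitly uses. Your sign computation only involves the first component, and it is legitimate precisely because $H_j\vec\Gamma_j=\vec\Psi_j$ holds for the corrected vector.
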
 

\begin{proof} Fixed $j \in \{1,2\}$. We know from \eqref{system4} that 
$$
-\partial_{x}^{2} \Phi_{j}^{(1)}+\left(1-\omega_j^{2}\right) \Phi_{j}^{(1)}-\alpha(\Phi_{j}^{(1)})^{3}+ \beta(\Phi_{j}^{(1)})^{3}=0,
$$
then we get

\begin{equation*}
-\partial_{x}^{2} \partial_{\omega} \Phi_{j}^{(1)}+\left(1-\omega_j^{2}\right) \partial_{\omega} \Phi_{j}^{(1)}-3 \alpha(\Phi_{j}^{(1)})^{2} \partial_{\omega} \Phi_{j}^{(1)}+3 \beta(\Phi_{j}^{(1)})^{2} \partial_{\omega} \Phi_{j}^{(1)}=2 \omega_j \Phi_{j}^{(1)}. 
\end{equation*}

Thus,
$$
\begin{aligned}
 H_j \vec{\Gamma}_{j} &=\left(\begin{array}{c}
\left(-\partial_{x}^{2}+1+\alpha \Phi_{j}^{(3)}+\beta|\Phi_{j}^{(1)}|^{2}\right)\partial_{\omega} \Phi_{j}^{(1)} +4 \beta \Phi_{j}^{(1)} \Re\left(\Phi_{j}^{(1)} \overline{\partial_{\omega} \Phi_{j}^{(1)}}\right)...\\...+2 \alpha \Phi_{j}^{(1)}(-\frac{1}{2}\Phi_{j}^{(1)}\partial_{\omega} \Phi_{j}^{(1)})-2  \omega_j^2 \partial_{\omega} \Phi_{j}^{(1)}\\
i \omega_j \partial_{\omega} \Phi_{j}^{(1)}-i \omega_j \partial_{\omega} \Phi_{j}^{(1)} \\
-\Phi_{j}^{(1)} \partial_{\omega} \Phi_{j}^{(1)}+\Phi_{j}^{(1)} \partial_{\omega} \Phi_{j}^{(1)} \\
0
\end{array}\right)\\
&=\left(\begin{array}{c}
2 \omega_j \Phi_{j}^{(1)} \\
0 \\
0 \\
0
\end{array}\right) .
\end{aligned}
$$

Hence, we have 

\begin{equation*}
\left\langle H_j \vec{\Gamma}_{j}, \vec{\Gamma}_{j}\right\rangle<0,
\end{equation*}
for $\omega_j \neq 0$.
\end{proof}
\begin{lemma}\label{lemmacoercivity}
For $j=1,2$ assume $\omega_j \neq 0$, for any given $\vec{\xi}=(\xi, \eta, \zeta, \iota)\in X$ satisfying

\begin{equation*}
\left\langle\vec{\xi}, \vec{\Upsilon}_{j}\right\rangle=\left\langle\vec{\xi}, \partial_{x} \vec{\Phi}_{j}\right\rangle=\left\langle\vec{\xi}, \vec{\Psi}_{j}\right\rangle=0
\end{equation*}

there exists a positive constant $C$ such that

\begin{equation*}
\left\langle H_j \vec{\xi}, \vec{\xi}\right\rangle \geq C\|\vec{\xi}\|_{X}^{2}.
\end{equation*}
Moreover, it follows from the above that there exists $C>0$ such that
\[\begin{aligned}
					\langle H_j \vec{\xi},\vec{\xi}\rangle &\geq C\|\vec{\xi}\|_{X}^2-\langle\vec{\xi},\vec{Y}_{\omega_j}\rangle_{L^2(\R)}^2-\langle\vec{\xi},\partial_x \vec{\Phi}_{j}\rangle_{L^2(\R)}^2,
				\end{aligned}\]
            where $Y_{\omega_j}$ is the only negative eigenvalue associated with $H_j$.
        
\end{lemma}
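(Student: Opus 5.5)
We reduce the quadratic form $\langle H_j\vec\xi,\vec\xi\rangle$ to scalar Schrödinger-type operators by completing squares in the decoupled components. Writing $\xi=\xi_1+i\xi_2$ and $\Phi=\Phi_j^{(1)}$ (real), the explicit form of $H_j$ gives
\[
\langle H_j\vec\xi,\vec\xi\rangle = 2\|\eta-i\omega_j\xi\|_{L^2}^2+\alpha\Big\|\zeta+2\Re(\Phi\bar\xi)\Big\|_{L^2}^2+\alpha\|\iota\|_{L^2}^2+2\langle L_+\xi_1,\xi_1\rangle+2\langle L_-\xi_2,\xi_2\rangle ,
\]
where $L_+=-\partial_x^2+(1-\omega_j^2)+3\mathbbm{m}\Phi^2$ and $L_-=-\partial_x^2+(1-\omega_j^2)+\mathbbm{m}\Phi^2$. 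These arise after using $\Phi_j^{(3)}=-\Phi^2$, eliminating $\zeta$ by its square, and absorbing the $\omega_j^2$ coming from the $\eta$-square. This step needs $\alpha>0$, which I would assume, consistent with the energy \eqref{ener1}.

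The scalar operators are classical for the sech profile solving \eqref{omegastate}. $L_-$ is nonnegative with kernel spanned by $\Phi$. $L_+$ has exactly one negative eigenvalue, kernel spanned by $\partial_x\Phi$, and the rest of its spectrum is bounded below by a positive constant. Moreover $L_+\partial_\omega\Phi=2\omega_j\Phi$, which is the computation in Lemma \ref{lemma2}, and
\[
\langle L_+^{-1}\Phi,\Phi\rangle=\frac{1}{2\omega_j}\langle\partial_\omega\Phi,\Phi\rangle=\frac{1}{4\omega_j}\frac{d}{d\omega}\|\Phi_\omega\|_{L^2}^2 .
\]
Since $\|\Phi_\omega\|_2^2\propto\sqrt{1-\omega^2}$, this quantity is negative whenever $\omega_j\neq0$.

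The orthogonality conditions then do the following work.
\begin{itemize}
\item $\langle\vec\xi,\vec\Psi_j\rangle=0$ gives $\langle\xi_1,\Phi\rangle=0$. The standard Weinstein argument, using the sign of $\langle L_+^{-1}\Phi,\Phi\rangle$, then shows $L_+$ is nonnegative on $\Phi^\perp$.
\item $\langle\vec\xi,\partial_x\vec\Phi_j\rangle=0$ involves $\langle\xi_1,\Phi'\rangle$ plus a $\zeta$-term. After the change of variables it should give orthogonality to the kernel of $L_+$; if an error term appears, it is controlled by the $\zeta$-square.
\item $\langle\vec\xi,\vec\Upsilon_j\rangle=0$ gives $\langle\xi_2,\Phi\rangle-\omega_j\langle\eta_2,\Phi\rangle\cdots$, which together with the $\eta$-square controls the $L_-$ kernel direction.
\end{itemize}
A compactness argument then upgrades nonnegativity to $\langle L_\pm\cdot,\cdot\rangle\ge c\|\cdot\|_{H^1}^2$ on the relevant subspaces. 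The idea is to take a minimizing sequence with $L^2$ norm $1$ and pass to a weak limit, which is nonzero because the potential term is compact.

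Finally I assemble the full $X$-norm. The quantities $\|\eta\|_{L^2}$ and $\|\zeta\|_{L^2}$ are recovered from the squares by the triangle inequality, using the $\xi$ bound already obtained. The second inequality of the lemma follows by the standard argument: decompose $\vec\xi=a\vec Y_{\omega_j}+b\,\partial_x\vec\Phi_j+\vec\xi^\perp$ and apply the spectral theorem to the self-adjoint $H_j$ (one negative eigenvalue, kernel as in Lemma \ref{kernel}). The cross terms are handled by Cauchy–Schwarz, and $C$ is shrunk as needed.

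I expect the main obstacle to be that the orthogonality conditions are stated in the original variables $(\xi,\eta,\zeta,\iota)$, not in the completed-square variables. Checking that they still exclude the negative and kernel directions, with quantitative constants, requires care. The first thing I would try is to bound the mismatch between the two sets of conditions by $\|\eta-i\omega_j\xi\|$ and $\|\zeta+2\Phi\xi_1\|$, and absorb it.
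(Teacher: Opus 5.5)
Your completed-square identity is correct, and $\alpha>0$ is genuinely necessary, since $(0,0,0,\iota)$ satisfies all three constraints. The paper gives no argument of its own. It only points to \cite[Lemma 4]{yin}, which, given Lemmas \ref{kernel} and \ref{lemma2}, is presumably a Weinstein-type argument on the full operator $H_j$. Your reduction to $L_\pm$ is a different, more transparent route, and it can yield the first inequality. The second inequality, however, is false as written, so your ``standard argument'' cannot prove it. By Lemma \ref{kernel}, $\ker H_j$ also contains $\vec\Upsilon_j$. This vector is orthogonal to $\vec Y_{\omega_j}$, an eigenvector of the symmetric operator $H_j$ for the eigenvalue $-\varepsilon_0^j\neq0$. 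It is also orthogonal to $\partial_x\vec\Phi_j$, since $\langle\vec\Upsilon_j,\partial_x\vec\Phi_j\rangle=\Re\big(i(1+\omega_j^2)\int\Phi\Phi'\big)=0$. So at $\vec\xi=\vec\Upsilon_j$ the left side is $0$ while the right side is $C\|\vec\Upsilon_j\|_X^2>0$. In your decomposition $\vec\xi=a\vec Y_{\omega_j}+b\,\partial_x\vec\Phi_j+\vec\xi^\perp$, the remainder can contain $\vec\Upsilon_j$, where $H_j$ vanishes, so the spectral theorem gives no lower bound there. You must also subtract $\langle\vec\xi,\vec\Upsilon_j\rangle^2$. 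Then the argument goes through, with a constant in front of the subtracted squares or a suitable normalization of $\vec Y_{\omega_j}$; shrinking $C$ alone does not compensate the eigenvalue $-\varepsilon_0^j$.

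For the first inequality, your treatment of $\langle\vec\xi,\partial_x\vec\Phi_j\rangle=0$ is missing an idea. Set $\Phi=\Phi_j^{(1)}$, $w=\eta-i\omega_j\xi$ and $y=\zeta+2\Phi\xi_1$. The three constraints then read
\[
\langle\xi_1,\Phi\rangle=0,\qquad (1+\omega_j^2)\langle\xi_2,\Phi\rangle=\omega_j\langle\Re w,\Phi\rangle,\qquad \langle\xi_1,\chi\rangle=-\omega_j\langle\Im w,\Phi'\rangle+2\langle y,\Phi\Phi'\rangle,
\]
with $\chi=(1+\omega_j^2+4\Phi^2)\Phi'$. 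Note that the $\vec\Upsilon_j$ condition involves $\Re\eta$, not $\Im\eta$, and the $\partial_x\vec\Phi_j$ condition also involves $\Im\eta$.

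The third relation is \emph{not} orthogonality to $\ker L_+$ up to errors controlled by the squares. The piece $\langle\xi_1,(\omega_j^2+4\Phi^2)\Phi'\rangle$ has the size of $\xi_1$ itself, so bounding the mismatch by $\|w\|$ and $\|y\|$ and absorbing it fails. What saves you is that $\langle\chi,\Phi'\rangle>0$. Since $L_+\geq0$ on $\Phi^\perp$ with null direction $\Phi'$ only, $L_+$ is coercive on $\Phi^\perp\cap\chi^\perp$. Write $\xi_1=c\,\Phi'+g$ with $c=\langle\xi_1,\chi\rangle/\langle\Phi',\chi\rangle$, so that $g\perp\Phi,\chi$ and $|c|\lesssim\|w\|_{L^2}+\|y\|_{L^2}$. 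This gives $\|\xi_1\|_{H^1}^2\lesssim\langle L_+\xi_1,\xi_1\rangle+\|w\|_{L^2}^2+\|y\|_{L^2}^2$, and the rest of your assembly works.

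Alternatively, avoid the mismatch as the cited route does. The constraints are exactly $\vec\xi\perp\ker H_j$ and $\vec\xi\perp H_j\vec\Gamma_j$, so use your identity only for the spectral input (one negative direction by inertia, the kernel, essential spectrum bounded away from $0$) and run Weinstein's argument on $H_j$ itself. For that, $\vec\Gamma_j$ needs third component $-\Phi\,\partial_\omega\Phi$, i.e.\ $w=y=0$; the printed $-\frac12\Phi\,\partial_\omega\Phi$ does not give $H_j\vec\Gamma_j=\vec\Psi_j$.

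Finally, your bound controls $\eta$ only in $L^2$. That is right for $X=H^1\times L^2\times L^2\times L^2$, which is Theorem \ref{wellpo} with $s=1$, $r=0$. With the $H^1$ second slot of Theorem \ref{2maintheorem}, even the first inequality fails, so state which $X$ you use.
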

\begin{proof}
    Similar argument can be found in (\cite{yin}, Lemma 4).
\end{proof}
Let $\varepsilon_0^j>0$, such that $$H_j \vec{Y}_{\omega_j}=-\varepsilon_0^j \vec{Y}_{\omega_j}.$$
Then, we define $$\vec{Y}_{\omega_j}^{\pm}=(Y_{\omega_k}^{(1)},\pm \varepsilon_0^jY_{\omega_j}^{(1)},Y_{\omega_j}^{(3)} ,Y_{\omega_j}^{(4)})$$ and $$\vec{Z}_{\omega_j}^{\pm}=(\pm \varepsilon_0^j Y_{\omega_j}^{(1)},Y_{\omega_j}^{(1)},Y_{\omega_j}^{(3)},Y_{\omega_j}^{(4)})$$ 
It is worth noting that, from \cite{WEIN}, the functions $\vec{Z}_{\omega_j}^{\pm}$ satisfy $\|\vec{Z}_{\omega_j}^{\pm}\|_{L^2(\R)}^2=1$ and decay to zero at infinity. Moreover, for $\epsilon>0$, \[|\vec{Z}_{\omega_j}^{\pm}|+|\partial_x \vec{Z}_{\omega_j}^{\pm}|\leq e^{-\epsilon |x|}.\]
In the same way we define,
$$Y_{k}=Y_{\omega_k}(\cdot-z_k), \,\, \, \, \, \, \vec{Y}_{k}^{\pm}=\vec{Y}_{\omega_k}^{\pm}(\cdot-z_k),\,\, \,  \, \, \, \vec{Z}_{k}^{\pm}=\vec{Z}_{\omega_k}^{\pm}(\cdot-z_k),$$
and 
$$\lambda^{\pm}=\langle \vec{\varepsilon},\vec{Z}^{\pm}\rangle, \,\, \, \, \, \, \lambda_{k}^{\pm}= \langle \vec{\varepsilon},\vec{Z}_{k}^{\pm}\rangle, \, \, with \, \, 
 \vec{Z}^{\pm}=\sum_{k=1}^{2}\vec{Z}_{k}^{\pm}.$$
   Next, we will establish a result that is a consequence of the theory established by Pego-Weinstein in \cite{WEIN}.
   \begin{lemma}\label{coer}
      The following properties hold:

(i) $\vec{Z}_j^{ \pm}$ are  eigenfunctions of $H_{j}.$

(ii)  For all  $\eta_{0}>0$, $x \in \mathbb{R}$ and  $m=1,2$,
\[
\left|Y_{j}^{ \pm,m}(x)\right|+\left|\partial_{x} Y_{j}^{ \pm,m}(x)\right|+\left|Z_{j}^{ \pm,m}(x)\right|+\left|\partial_{x} Z_{j}^{ \pm,m}(x)\right| \leq C e^{-\eta_{0} \sqrt{c}|x|}.
\]

(iii) 
$\left(\vec{Y}_{j}^{\pm}, \vec{Z}_{j}^{\pm}\right)_{L^{2}(\R)\times L^{2}(\R)}=\pm 2\varepsilon_0^j$ and $\left(\vec{Z}_{j}^{\pm}, \partial_x\vec{\Phi}_{j}\right)_{L^{2}(\R)\times L^{2}(\R)}=0.$

(iv)   \[\left(\vec{Y}_{j}^{+}, \vec{Z}_{j}^{-}\right)_{L^{2}(\R)\times L^{2}(\R)}=\left(\vec{Y}_{j}^{-}, \vec{Z}_{j}^{+}\right)_{L^{2}(\R)\times L^{2}(\R)}=0.\]
(v) There exists $C>0$ such that
\[\begin{aligned}
					\langle H_j \vec{\xi},\vec{\xi}\rangle &\geq C\|\vec{\xi}\|_{X}^2-\langle\vec{\xi},\vec{Z}_j^{+}\rangle_{L^2(\R)}^2-\langle\vec{\xi},\vec{Z}_j^{-}\rangle_{L^2(\R)}^2-\langle\vec{\xi},\partial_x \vec{\Phi}_{j}\rangle_{L^2(\R)}^2.
				\end{aligned}\]
            where $Y_j$ is the only negative eigenvalue associated with $H_j$.
   \end{lemma}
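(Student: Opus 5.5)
The plan follows the Pego--Weinstein framework \cite{WEIN}, using the spectral information already available for $H_j$. By Lemma \ref{kernel}, $\ker H_j=\text{span}\{\partial_x\vec\Phi_j,\vec\Upsilon_j\}$. By Lemma \ref{lemmacoercivity}, $H_j$ has exactly one negative eigenvalue $-\varepsilon_0^j$, with eigenfunction $\vec Y_{\omega_j}$, and it is coercive on the complement of the negative and kernel directions.

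For (i), I will not treat $\vec Z^\pm_j$ as eigenfunctions of $H_j$ alone. Instead I will use the linearized Hamiltonian operator $JH_j$ (after the modulation $T(\omega_j)$), as in \cite{WEIN}.
\begin{itemize}
\item The first step is to show that $JH_j$ has a pair of real eigenvalues $\pm e_j$ with eigenfunctions $\vec Y^\pm_j$. Since the first two rows of $J$ only swap $u,\rho$ with factors $\pm\frac12$, the components $\vec Y^\pm_j$ can be written explicitly from $\vec Y_{\omega_j}$, with second component $\pm\varepsilon_0^j Y^{(1)}$. This is exactly the definition of $\vec Y^\pm_{\omega_j}$.
\item Then I set $\vec Z^\pm_j=J^{-1}\vec Y^\mp_j$ up to normalization, which gives the stated formula for $\vec Z^\pm_j$. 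The identity $H_j\vec Y^\pm=\pm e_j J^{-1}\vec Y^\pm$ then shows that $\vec Z^\pm_j$ is an eigenfunction of the adjoint $(JH_j)^*=-H_jJ$. This is the precise meaning of (i).
\end{itemize}

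For (ii), the decay estimate, I will apply the Agmon/ODE argument of Proposition \ref{decaimentoquadratico} to the eigenvalue equation. Away from the origin the potentials $\Phi_j^2$ decay exponentially, so the eigenfunction solves a constant-coefficient system up to exponentially small errors. Its decay rate is then fixed by the essential spectrum gap $1-\omega_j^2>0$, which is the constant $c$. Derivatives are bounded through the equation itself.

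For (iii) and (iv), I will use the skew-symmetry $J^*=-J$ together with the eigenvalue relations.
\begin{itemize}
\item Computing $e_j(\vec Y^+,\vec Z^-)$ in two ways gives $e_j(\vec Y^+,\vec Z^-)=-e_j(\vec Y^+,\vec Z^-)$. Since $e_j\neq0$, this forces (iv).
\item Since $H_j\partial_x\vec\Phi_j=0$, the pairing $(\vec Z^\pm,\partial_x\vec\Phi_j)$ is proportional to $e_j^{-1}\langle \vec Y^\mp, H_j\partial_x\vec\Phi_j\rangle=0$.
\item The normalization $\pm2\varepsilon_0^j$ follows directly from the explicit components: the cross terms give $\pm\varepsilon_0^j\|Y^{(1)}\|^2$ twice, and $\|\vec Z\|_{L^2}=1$ fixes the constant.
\end{itemize}

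For (v), I will proceed in three steps.
\begin{itemize}
\item Decompose $\vec\xi=a\vec Y_{\omega_j}+b\,\partial_x\vec\Phi_j+\vec\xi^\perp$, so that $\vec\xi^\perp$ is orthogonal to both directions. Lemma \ref{lemmacoercivity} then gives $\langle H_j\vec\xi^\perp,\vec\xi^\perp\rangle\ge C\|\vec\xi^\perp\|_X^2$.
\item Show that $\vec Y_{\omega_j}$ lies in the span of $\vec Z^+_j,\vec Z^-_j$ plus lower-order terms. Then $|a|\lesssim|\langle\vec\xi,\vec Z^+\rangle|+|\langle\vec\xi,\vec Z^-\rangle|$, by nondegeneracy of the $2\times2$ Gram matrix from (iii)--(iv).
\item Absorb the cross terms with Cauchy--Schwarz and Young's inequality, and rescale $C$.
\end{itemize}

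The main obstacle is the nondegeneracy used in the second step of (v): the pairing between $\vec Y_{\omega_j}$ and $\vec Z^\pm$ must be nonzero. I would establish it from (iii), since $\pm2\varepsilon_0^j\neq0$. This is the point where the real eigenvalue pair of $JH_j$, which exists because $\omega_j\neq0$, is genuinely needed.
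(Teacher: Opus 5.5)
The failing step is the first one in (v). You claim $\langle H_j\vec\xi^\perp,\vec\xi^\perp\rangle\ge C\|\vec\xi^\perp\|_X^2$ whenever $\vec\xi^\perp$ is orthogonal to $\vec Y_{\omega_j}$ and $\partial_x\vec\Phi_j$. But by Lemma \ref{kernel} the kernel of $H_j$ also contains the phase direction $\vec\Upsilon_j=(i\Phi_j^{(1)},-\omega_j\Phi_j^{(1)},0,0)$. This vector is orthogonal to $\vec Y_{\omega_j}$ (different eigenvalue of a symmetric operator) and to $\partial_x\vec\Phi_j$ (both pairings are real parts of purely imaginary integrals), yet $\langle H_j\vec\Upsilon_j,\vec\Upsilon_j\rangle=0$. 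The first part of Lemma \ref{lemmacoercivity} requires orthogonality to $\vec\Upsilon_j,\partial_x\vec\Phi_j,\vec\Psi_j$, not to $\vec Y_{\omega_j},\partial_x\vec\Phi_j$; its second displayed inequality fails at $\vec\Upsilon_j$ for the same reason. Worse, in your own framework $\vec Z^\pm_j\propto J^{-1}\vec Y^\mp_j=\mp e_j^{-1}H_j\vec Y^\mp_j$. The argument you give for $\partial_x\vec\Phi_j$ then equally yields $\langle\vec Z^\pm_j,\vec\Upsilon_j\rangle\propto\langle\vec Y^\mp_j,H_j\vec\Upsilon_j\rangle=0$. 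Taking $\vec\xi=\vec\Upsilon_j$, the left side of (v) is $0$ and the right side is $C\|\vec\Upsilon_j\|_X^2>0$. So (v) is false as stated, and no Gram-matrix or Young-inequality step can rescue it. The missing idea is that for complex standing waves the gauge symmetry adds a second kernel direction. Coercivity must therefore also subtract $\langle\vec\xi,\vec\Upsilon_j\rangle^2$, which costs nothing later because Lemma \ref{2lemamodulation} already imposes orthogonality to $T(\theta)\vec\Upsilon_\omega$.

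The construction behind (i)--(iv) does not work either. You were right not to read (i) literally, but your substitute is not available. With the displayed $H_j$ and $J$, one gets $(JH_j\vec Y^\pm_j)_1=-\tfrac12(H_j\vec Y^\pm_j)_2=(\mp\varepsilon_0^j+i\omega_j)Y^{(1)}$. Since $\omega_j\neq0$, this is a real multiple of $Y^{(1)}$ only if $Y^{(1)}=0$, so the paper's $\vec Y^\pm_j$ are not eigenvectors of $JH_j$ for a real eigenvalue. The ansatz $(Y,\pm\lambda Y)$ is the NLKG one, where the Hessian is $\mathrm{diag}(L,1)$ with $LY=-\lambda^2Y$. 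It breaks under the $\pm2i\omega_j$ couplings in $H_j$ and under the wave block $\alpha^{-1}\partial_x$ of $J$; in particular $J^{-1}\vec Y^\mp_j$ involves $\partial_x^{-1}$ and does not return $(Y^{(3)},Y^{(4)})$. Nor does $\omega_j\neq0$ produce a real pair $\pm e_j$. By the Grillakis--Shatah--Strauss count such a pair amounts to linear instability of the wave, whereas Lemma \ref{lemma2} only exhibits a negative direction of $H_j$. For $\beta=0$, the waves with $1/\sqrt2<|\omega_j|<1$ are orbitally stable by Lin's result quoted in the introduction. Your normalization in (iii) is also inconsistent. From the explicit components, $(\vec Y^\pm_j,\vec Z^\pm_j)=\pm2\varepsilon_0^j\|Y^{(1)}\|^2+\|Y^{(3)}\|^2+\|Y^{(4)}\|^2$. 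Getting $\pm2\varepsilon_0^j$ for both signs forces $\|Y^{(1)}\|=1$ and $Y^{(3)}=Y^{(4)}=0$. That contradicts $\|\vec Z^\pm_j\|_{L^2}=1$, and for $\alpha\neq0$ it also contradicts the eigen-equation $Y^{(3)}=-\tfrac{2\alpha}{\alpha+\varepsilon_0^j}\Re(\Phi_j^{(1)}\overline{Y^{(1)}})$. Finally, your Agmon argument in (ii) gives decay only for rates below the spectral gap, not for every $\eta_0>0$. The paper offers no argument beyond citing \cite{WEIN}, so these are defects of the statement itself. A provable version needs corrected definitions of $\vec Y^\pm_j,\vec Z^\pm_j$, a restriction to the linearly unstable range of $\omega_j$, and the extra $\vec\Upsilon_j$ term in (v).
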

   Once the new coercivity condition is established, we proceed to state the main theorem under these conditions.
			
  \vspace{2mm}
We will now establish a result of parameter modulation that allows us to obtain some new estimates. 
\\
Let us now consider, for $k=1,2$, the following set
$$U_{\varepsilon}\left(\vec{\Phi}_{\omega_k}\right)=\left\{\vec{u} \in X: \inf_{|\xi_k^1-\xi_k^2|>\frac{2}{\sqrt{\omega_{\star}}}\log(\epsilon), \theta_k \in \R}\left\|\vec{u}-T(\theta_k)\vec{\Phi}_{\omega_k}(\cdot-z_k)\right\|_{X}<\epsilon\right\}.$$	
\begin{lemma}\label{2lemamodulation}
				There exists $\varepsilon_{0}>0$ such that for any given $\varepsilon \in\left(0, \varepsilon_{0}\right)$, if $\vec{u} \in U_{\varepsilon}\left(\vec{\Phi}_{\omega}\right)$, then there exist $\mathrm{C}^{1}$-functions
$$
\theta(t):\left[0, t^{\star}\right] \rightarrow \mathbb{R},  \quad z(t):\left[0, t^{\star}\right] \rightarrow \mathbb{R},
$$
such that if we define $\vec{\varepsilon}=(\xi, \eta, \zeta, \iota)$ satisfying

\begin{equation*}\label{ident1}
\vec{\varepsilon}(t)=\vec{u}(t)-T(\theta(t)) \vec{\Phi}_{\omega}(\cdot-z(t)):=\vec{u}(t)-\vec{R}(t).
\end{equation*}
Then for any given $\mathrm{t} \in\left[0, \mathrm{t}^{\star}\right], \vec{\varepsilon}$ satisfies the following orthogonality conditions,

\begin{equation}\label{2orthogonality}
\left\langle\vec{\varepsilon}, T(\theta(t)) \vec{\Upsilon}_{ \omega}\right\rangle=\left\langle\vec{\varepsilon}, T(\theta(t)) \partial_{x} \vec{\Phi}_{ \omega}\right\rangle=\left\langle\vec{\varepsilon}, T(\theta(t)) \vec{\Psi}_{ \omega}\right\rangle=0 
\end{equation}
where
$$\begin{aligned}
T(\theta(t)) \vec{\Upsilon}_{ \omega} & =\left(i e^{i \theta(t)} \Phi_{ \omega}^{(1)}(\cdot-z(t)),- \omega e^{i \theta(t)} \Phi_{ \omega}^{(1)}(\cdot-z(t)), 0,0\right) \\
T(\theta(t)) \partial_{x} \vec{\Phi}_{ \omega} & =\left(e^{i \theta(t)} \partial_{x} \Phi_{ \omega}^{(1)}(\cdot-z(t)), i  \omega e^{i \theta(t)} \partial_{x} \Phi_{ \omega}^{(1)}(\cdot-z(t)),-2 \Phi_{ \omega}^{(1)} \partial_{x} \Phi_{ \omega}^{(1)}(\cdot-z(t)), 0\right) \\
T(\theta(t)) \vec{\Psi}_{ \omega} & =\left(2  \omega e^{i \theta(t)} \Phi_{ \omega}^{(1)}(\cdot-z(t)), 0,0,0\right).
\end{aligned}$$

Here $\vec{R}=T(\theta(t)) \vec{\Phi}_{ \omega}(\cdot-z(t))$ is called the modulated soliton. Moreover, we have estimates

\begin{equation}\label{estimative1}
\|\vec{\varepsilon}\|_{X}+e^{-2|z|} \lesssim \varepsilon 
\end{equation}
 and
\begin{equation}\label{estimative2}
|\dot{\theta}(t)- \omega|+|\dot{z}|=O\left(\|\vec{\varepsilon}\|_{X}^{2}\right).
\end{equation}
Moreover, \begin{equation}\label{estimative2.5}
 \left|\dot{\theta}_{k} \mp \frac{z}{|z|}g(|z|)\right|\lesssim \|\vec{\varepsilon}\|_{X}^2 +e^{-4\sqrt{\omega_{\star}}|z|}+\|\vec{\varepsilon}\|_{X}O(|\dot{z}_{1}|).   
\end{equation}
			\end{lemma}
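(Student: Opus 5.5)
The plan is the standard modulation argument: an implicit function theorem in the parameters $(\theta,z)$ gives the orthogonality conditions \eqref{2orthogonality}, and differentiating those conditions along the flow \eqref{hamilt} gives the dynamical estimates \eqref{estimative2}–\eqref{estimative2.5}.

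\textbf{Step 1: choosing the parameters.} Fix $\vec u\in U_\varepsilon(\vec\Phi_\omega)$. By definition of $U_\varepsilon$ there are $(\theta_0,z_0)$ with $\|\vec u-T(\theta_0)\vec\Phi_\omega(\cdot-z_0)\|_X<\varepsilon$. Define
\[
F(\vec u,\theta,z)=\Big(\langle \vec u-T(\theta)\vec\Phi_\omega(\cdot-z),T(\theta)\vec\Upsilon_\omega(\cdot-z)\rangle,\ \langle \vec u-T(\theta)\vec\Phi_\omega(\cdot-z),T(\theta)\partial_x\vec\Phi_\omega(\cdot-z)\rangle\Big).
\]
Its Jacobian in $(\theta,z)$ at $\vec u=T(\theta_0)\vec\Phi_\omega(\cdot-z_0)$ is
\[
-\begin{pmatrix}\|\vec\Upsilon_\omega\|^2 & \langle\partial_x\vec\Phi_\omega,\vec\Upsilon_\omega\rangle\\ \langle\vec\Upsilon_\omega,\partial_x\vec\Phi_\omega\rangle & \|\partial_x\vec\Phi_\omega\|^2\end{pmatrix}.
\]
The off-diagonal entries vanish by parity, since $\Phi_\omega$ is even and $\partial_x\Phi_\omega$ is odd. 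The diagonal entries are positive, so the matrix is invertible. Because the constraint set is invariant under translations and phases, the implicit function theorem gives locally unique $C^1$ maps $(\theta,z)(\vec u)$ with $|\theta-\theta_0|+|z-z_0|\lesssim\varepsilon$, uniformly in $(\theta_0,z_0)$.

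The third condition, $\langle\vec\varepsilon,T(\theta)\vec\Psi_\omega\rangle=0$, involves only the $\Phi^{(1)}_\omega$ direction, which is not generated by the two symmetry parameters. My first attempt is to obtain it from the conservation of $Q_2$. If that fails, I would modulate the frequency as well, adding $\omega(t)$ as a third parameter with $\partial_\omega\vec\Phi_\omega$ as the generating direction. This gives a $3\times3$ Jacobian whose nondegeneracy relies on $\langle\vec\Psi,\vec\Gamma\rangle\neq0$ from Lemma \ref{lemma2}. Once the parameters are fixed, the triangle inequality gives $\|\vec\varepsilon\|_X\lesssim\varepsilon$. The bound on $e^{-2|z|}$ in \eqref{estimative1} comes from the separation constraint $|\xi^1-\xi^2|>\frac{2}{\sqrt{\omega_\star}}\log\varepsilon$ in the definition of $U_\varepsilon$.

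\textbf{Step 2: the equation for $\vec\varepsilon$.} Composing with the flow $t\mapsto\vec u(t)$ makes $\theta(t)$ and $z(t)$ $C^1$, using the well-posedness from Theorem \ref{wellpo}. Substituting $\vec u=\vec R+\vec\varepsilon$ into \eqref{hamilt} and using $JE'(\vec\Phi_\omega)=\omega J Q_2'(\vec\Phi_\omega)$, which is $\mathcal S'(\vec\Phi_\omega)=0$, gives
\[
\partial_t\vec\varepsilon=JH\vec\varepsilon-(\dot\theta-\omega)T\vec\Upsilon_\omega+\dot z\,T\partial_x\vec\Phi_\omega+O(\|\vec\varepsilon\|_X^2).
\]
Here the rotating-frame term $\omega JQ_2''$ is absorbed into $H$.

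\textbf{Step 3: the modulation estimates.} Differentiate each orthogonality condition in time and pair with the equation of Step 2. The linear terms $\langle JH\vec\varepsilon,T\vec\Upsilon\rangle=-\langle\vec\varepsilon,HJ\vec\Upsilon\rangle$ and the analogous term for $\partial_x\vec\Phi$ vanish: by Lemma \ref{kernel}, $J\vec\Upsilon$ and $J\partial_x\vec\Phi$ lie in the generalized kernel, up to multiples of the orthogonality directions. The result is a linear system in $(\dot\theta-\omega,\dot z)$ with an invertible matrix, perturbed by $O(\|\vec\varepsilon\|_X)$, and a right-hand side of order $O(\|\vec\varepsilon\|_X^2)$. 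This yields \eqref{estimative2}.

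\textbf{Step 4: the two-soliton refinement \eqref{estimative2.5}.} Take $\vec R=\vec R_1+\vec R_2$. The equation now carries the extra interaction source $f(R_1^{(1)}+R_2^{(1)})-f(R_1^{(1)})-f(R_2^{(1)})$. Projecting the equation onto $\partial_x\vec R_k$, Lemma \ref{solitons} (iii), estimates \eqref{4} and \eqref{5}, identifies the leading term as $\pm c_1\frac{z}{|z|}g(|z|)$ with error $e^{-6\sqrt{\omega_\star}|z|}$. Cross terms between $\vec R_1$ and $\vec R_2$ are bounded by Lemma \ref{solitons} (i). The errors coming from time derivatives of the translated profiles produce the term $\|\vec\varepsilon\|_X\,O(|\dot z_1|)$.

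I expect Step 4 to be the main obstacle, for two reasons. First, the reading of \eqref{estimative2.5} as a law for $\dot\theta_k$ has to be reconciled with projecting onto the translation direction, which naturally controls $\dot z$ and the momentum. Second, the interaction error has to come out at $e^{-4\sqrt{\omega_\star}|z|}$ rather than $e^{-\sqrt{\omega_\star}|z|}$.
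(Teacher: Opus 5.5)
The proposal has a genuine gap, and the decisive one is in Step~3. Lemma~\ref{kernel} places $\vec{\Upsilon}_{\omega}$ and $\partial_x\vec{\Phi}_{\omega}$ in $\ker H$. It says nothing about $J\vec{\Upsilon}_{\omega}$ or $J\partial_x\vec{\Phi}_{\omega}$, and for these the cancellation you invoke is false.

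With the paper's $J$ and $H$ one finds $J\vec{\Upsilon}_{\omega}=(\frac{\omega}{2}\Phi_{\omega},\frac{i}{2}\Phi_{\omega},0,0)$. So the third component of $HJ\vec{\Upsilon}_{\omega}$ is $2\alpha\Re(\Phi_{\omega}\cdot\frac{\omega}{2}\Phi_{\omega})=\alpha\omega\Phi_{\omega}^{2}$. Likewise, the fourth component of $HJ\partial_x\vec{\Phi}_{\omega}$ is $-(\Phi_{\omega}^{2})''$. Now $\vec{\Upsilon}_{\omega}$ and $\vec{\Psi}_{\omega}$ have vanishing third and fourth components, and $\partial_x\vec{\Phi}_{\omega}$ has the odd third component $-(\Phi_{\omega}^{2})'$ and vanishing fourth component. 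Hence neither $HJ\vec{\Upsilon}_{\omega}$ (for $\alpha\omega\neq0$) nor $HJ\partial_x\vec{\Phi}_{\omega}$ lies in the span of the orthogonality directions.

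The linear terms $\langle\vec{\varepsilon},HJ\vec{\Upsilon}_{\omega}\rangle$ and $\langle\vec{\varepsilon},HJ\partial_x\vec{\Phi}_{\omega}\rangle$ are therefore genuinely of size $\|\vec{\varepsilon}\|_X$ for generic admissible $\vec{\varepsilon}$. Inverting the modulation matrix gives only $|\dot\theta-\omega|+|\dot z|\lesssim\|\vec{\varepsilon}\|_X$. This is exactly where the paper's own computation stops ($|\operatorname{Mod}(t)|\le C\|\vec{\varepsilon}\|_X+O(\|\vec{\varepsilon}\|_X^2)$), so neither argument reaches the quadratic bound \eqref{estimative2}.

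The linear terms do vanish if you pair with directions $\vec d$ such that $J\vec d\in\ker H$, namely $Q_2'(\vec R)$ and $Q_1'(\vec R)$; one checks $JQ_2'(\vec{\Phi}_{\omega})=-\vec{\Upsilon}_{\omega}$ and $JQ_1'(\vec{\Phi}_{\omega})=-\partial_x\vec{\Phi}_{\omega}$. But for those directions the $(\theta,z)$-Jacobian vanishes on the soliton manifold, by the phase and translation invariance of $Q_1,Q_2$. A quadratic estimate would force you to modulate frequency and velocity as well, which is a different lemma.

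Your Step~1 two-parameter implicit function argument matches the paper's and is fine. However, the two points you leave open are real obstructions.

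The third orthogonality condition cannot be imposed with $(\theta,z)$ alone. Take $\vec u=\vec{\Phi}_{\omega}+\delta(\Phi_{\omega},0,0,0)$. With the real $L^2$ pairing, a direct check gives $F_1=F_2=0$ at $(\theta,z)=(0,0)$, which by local uniqueness is the modulation. Then $\langle\vec{\varepsilon},\vec{\Psi}_{\omega}\rangle=2\delta\omega\|\Phi_{\omega}\|_{L^2}^2\neq0$. Conservation of $Q_2$ cannot repair this, since it only yields $\langle Q_2'(\vec R),\vec{\varepsilon}\rangle=Q_2(\vec u)-Q_2(\vec R)+O(\|\vec{\varepsilon}\|_X^2)$, a data-dependent quantity. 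Modulating $\omega(t)$ proves a different statement, with a frequency whose derivative must also be estimated. The paper's argument likewise only solves $F_1=F_2=0$.

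For \eqref{estimative2.5}, projecting onto $\partial_xR_1^{(1)}$ cannot give a law for $\dot\theta_1$. The coefficient of $\dot\theta_1$ is $\omega_1\langle R_1^{(1)},\partial_xR_1^{(1)}\rangle=\omega_1\int\Phi_1\partial_x\Phi_1=0$, and this vanishing coefficient is the one the paper's derivation relies on. Moreover, $\dot\theta_k\approx\pm\frac{z}{|z|}g(|z|)$ is exponentially small in $|z|$, which is incompatible with $\dot\theta_k\approx\omega_k\neq0$ from \eqref{estimative2}. What the translation projection actually governs is a velocity or momentum parameter, the analogue of $\dot\ell_k$ in strongly interacting NLS/gKdV constructions, and this modulation does not contain one. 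The size of the interaction error is not the issue, since \eqref{4}--\eqref{5} already give $e^{-6\sqrt{\omega_{\star}}|z|}$.
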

\begin{proof}
We first show the existence of $\theta$ and $y$ by the Implicit Function Theorem. Denoting
$$F_{1}(\theta, y ; \vec{u})=\left\langle\vec{\varepsilon}, T(\theta) \vec{\Upsilon}_{\omega}\right\rangle, \quad F_{2}(\theta, y; \vec{u})=\left\langle\vec{\varepsilon}, T(\theta) \partial_{x} \vec{\Phi}_{\omega}\right\rangle,$$
we have
$$
\begin{aligned}
F_{1}(\theta, y ; \vec{u})= & \left\langle u-e^{i \theta} \Phi_{\omega}^{(1)}(-y), i e^{i \theta} \Phi_{\omega}^{(1)}(-y)\right\rangle+\left\langle \rho-i  \omega e^{i \theta} \Phi_{\omega}^{(1)}(-y),- \omega e^{i \theta} \Phi_{\omega}^{(1)}(-y)\right\rangle, \\
F_{2}(\theta, y, \lambda ; \vec{u})= & \left\langle u-e^{i \theta} \Phi_{\omega}^{(1)}(-y), e^{i \theta} \partial_{x} \Phi_{\omega}^{(1)}(-y)\right\rangle+\left\langle v-i  \omega e^{i \theta} \Phi_{\omega}^{(1)}(-y), i  \omega e^{i \theta} \partial_{x} \Phi_{\omega}^{(1)}(-y)\right\rangle \\
& \left.+\left.\langle n+| \Phi_{\omega}^{(1)}(-y)\right|^{2},-2 \Phi_{\omega}^{(1)} \partial_{x} \Phi_{\omega}^{(1)}(-y)\right\rangle.
\end{aligned}
$$
Then
$$
\left.\frac{\partial\left(F_{1}, F_{2}\right)}{\partial(\theta, y)}\right|_{\left(0,0 ; \vec{\Phi}_{\omega)}\right.}=\left(\begin{array}{ccc}
a_{11} & 0  \\
0 & a_{22}  \\
\end{array}\right)
$$
where
$$
\begin{aligned}
& a_{11}=\left.\partial_{\theta} F_{1}\right|_{(0,0 ; \vec{\Phi}_{\omega})}=-\left(1+\omega^{2}\right)\left\|\Phi_{\omega}^{(1)}\right\|^{2} \\
& a_{22}=\left.\partial_{y} F_{2}\right|_{\left(0,0 ; \vec{\Phi}_{\omega}\right)}=\left(1+\omega^{2}\right)\left\|\partial_{x} \Phi_{\omega}^{(1)}\right\|^{2}+\left\|\partial_{x}\left|\Phi_{\omega}^{(1)}\right|^{2}\right\|^{2}.
\end{aligned}
$$

This means that the Jacobian matrix of the derivative of $(\theta, y) \rightarrow\left(F_{1}, F_{2}\right)$ is nondegenerate at $(\theta, y)=(0,0)$. Hence, by Implicit Function Theorem, we obtain the existence of $(\theta, y)$ in a neighborhood of $(0,0)$ satisfying
$$
\left(F_{1}, F_{2}\right)(\theta(t),z(t) ; \vec{u}(t))=0 .
$$

To verify the modulation parameters $(\theta, y)$ are $C^{1}$, we can use the equation of $\vec{\varepsilon}$ and regularization arguments, similar to the method in Martel-Merle \cite{marmer-1}. 
We now want to verify \eqref{estimative2}. From  \eqref{hamilt} and \eqref{ident1}, we have

\begin{equation*}
\partial_{t} \vec{\varepsilon}+i \dot{\theta} \vec{R}-\dot{z} \partial_{x} \vec{R} =J E^{\prime}(\vec{R}+\vec{\varepsilon})
\end{equation*}
Moreover, from Taylor's expansion, we have

\begin{equation*}
E^{\prime}(\vec{R}+\vec{\varepsilon})=E^{\prime}(\vec{R})+E^{\prime \prime}(\vec{R}) \vec{\varepsilon}+O\left(\|\vec{\varepsilon}\|_{X}^{2}\right) 
\end{equation*}

Noting that, the fact that $\Phi_{\omega}^{(1)}$ is a solution of

\begin{equation*}
-\partial_{x}^{2} \Phi_{\omega}^{(1)}+\left(1- \omega^{2}\right) \Phi_{\omega}^{(1)}-\alpha(\Phi_{\omega}^{3})+\beta(\Phi_{\omega}^{3})=0, 
\end{equation*}
implies 
\begin{equation}\label{estimative3}
\partial_{t} \vec{\varepsilon}+i(\dot{\theta}- \omega) A \vec{R}-\dot{z} \partial_{x} \vec{R}=J E^{\prime \prime}(\vec{R}) \vec{\varepsilon}+O\left(\|\vec{\varepsilon}\|_{X}^{2}\right)
\end{equation}
where
$$
A=\left(\begin{array}{llll}
1 & 0 & 0 & 0 \\
0 & 1 & 0 & 0 \\
0 & 0 & 0 & 0 \\
0 & 0 & 0 & 0
\end{array}\right)
$$

We differentiate $\left\langle\vec{\varepsilon}, T(\theta) \vec{\Upsilon}_{\omega}\right\rangle=0$ in \eqref{2orthogonality} with respect to time $t$ to get

\begin{equation*}
\left\langle\partial_{t} \vec{\varepsilon}, T(\theta) \vec{\Upsilon}_{ \omega}\right\rangle=-\left\langle\vec{\varepsilon}, \partial_{t}\left(T(\theta) \vec{\Upsilon}_{ \omega}\right)\right\rangle.
\end{equation*}
Denoting
$$
\operatorname{Mod}(t)=(\dot{\theta}(t)- \omega, \dot{z}(t))
$$
we have

\begin{equation}\label{estimative4}
\left\langle\vec{\varepsilon}, \partial_{t}\left(T(\theta) \vec{\Upsilon}_{\omega}\right)\right\rangle=O\left((1+|\operatorname{Mod}(t)|)\|\vec{\varepsilon}\|_{X}\right)
\end{equation}
Because of the orthogonality condition and taking the inner product of \eqref{estimative3} with $T(\theta) \vec{\Upsilon}_{ \omega}$, we get
$$
\begin{aligned}
& \left\langle\partial_{t} \vec{\varepsilon}, T(\theta) \vec{\Upsilon}_{\omega}\right\rangle \\
= & -(\dot{\theta}- \omega)\left\langle i A \vec{R}, T(\theta) \vec{\Upsilon}_{ \omega}\right\rangle+\dot{z}\left\langle\partial_{x} \vec{R}, T(\theta) \vec{\Upsilon}_{ \omega}\right\rangle+O\left(\|\vec{\varepsilon}\|_{X}^{2}\right)\\
= & (\dot{\theta}- \omega)(1+\omega^{2})\|\Phi_{\omega}^{(1)}\|_{L^2(\R)}^2 +O\left(\|\vec{\varepsilon}\|_{X}^{2}\right).
\end{aligned}
$$
Hence, from \eqref{estimative4} implies that
\begin{equation*}
(1+\omega^{2})\|\Phi_{\omega}^{(1)}\|_{L^2(\R)}(\dot{\theta}- \omega)=O\left((1+|\operatorname{Mod}(t)|)\|\vec{\varepsilon}\|_{X}\right)+O\left(\|\vec{\varepsilon}\|_{X}^{2}\right).
\end{equation*}
Taking the inner product of \eqref{estimative3} with $T(\theta) \partial_{x} \vec{\Phi}_{\omega}$, respectively, by similar arguments, we get
\begin{equation*}
\left[\left(1+ \omega^{2}\right)\left\|\partial_{x} \Phi_{\omega}^{(1)}\right\|^{2}+\left\|\partial_{x}\left|\Phi_{\omega}^{(1)}\right|^{2}\right\|^{2}\right] \dot{z}=O\left((1+|\operatorname{Mod}(t)|)\|\vec{\varepsilon}\|_{X}\right)+O\left(\|\vec{\varepsilon}\|_{X}^{2}\right).
\end{equation*}
Indeed, we have
\begin{equation*}
M\operatorname{Mod}(t)=O\left((1+|\operatorname{Mod}(t)|)\|\vec{\varepsilon}\|_{X}\right)+O\left(\|\vec{\varepsilon}\|_{X}^{2}\right) 
\end{equation*}
where
$$
M=\left(\begin{array}{ccc}
\left(1+ \omega^{2}\right)\left\|\Phi_{\omega}^{(1)}\right\|^{2} & 0  \\
0 & \left(1+ \omega^{2}\right)\left\|\partial_{x} \Phi_{\omega}^{(1)}\right\|^{2}+\left\|\partial_{x}\left|\Phi_{\omega}^{(1)}\right|^{2}\right\|^{2} 
\end{array}\right)
$$
is an invertible matrix. Therefore,
\begin{equation*}
|\operatorname{Mod}(t)| \leq C\|\vec{\varepsilon}\|_{X}+O\left(\|\vec{\varepsilon}\|_{X}^{2}\right).
\end{equation*}
Finally, note that for $f(u)=|u|^2u$ and $G=f\left(R_{1}^{(1)}+R_{2}^{(1)}\right)-f\left(R_{1}^{(1)}\right)-f\left(R_{2}^{(1)}\right)$, we have
$$
\begin{aligned}
& \frac{\mathrm{d}}{\mathrm{~d} t}\left\langle\varepsilon_2, \partial_{x} R_{1}^{(1)}\right\rangle=\left\langle\partial_{t} \varepsilon_2, \partial_{x} R_{1}^{(1)}\right\rangle+\left\langle\varepsilon_2, \partial_{t}\left(\partial_{x} R_{1}^{(1)}\right)\right\rangle \\
& =\left\langle -\partial_{xx} \varepsilon_1-\varepsilon_1+f^{\prime}\left(R_{1}^{(1)}\right) \varepsilon_1, \partial_{x} R_{1}^{(1)}\right\rangle+\left\langle f(R_{1}^{(1)}+R_{2}^{(1)}+\varepsilon_1)-f(R_{1}^{(1)}+R_{2}^{(1)})-f^{\prime}(R_{1}^{(1)}+R_{2}^{(1)}) \varepsilon_1, \partial_{x} R_{1}^{(1)}\right\rangle \\
& +\left\langle\left(f^{\prime}(R_{1}^{(1)}+R_{2}^{(1)})-f^{\prime}\left(R_{1}^{(1)}\right)\right) \varepsilon_1, \partial_{x} R_{1}^{(1)}\right\rangle+\left\langle\operatorname{Mod}_{\varepsilon_1}, \partial_{x} R_{1}^{(1)}\right\rangle+\left\langle G, \partial_{x} R_{1}^{(1)}\right\rangle+\left\langle D, \partial_{x} R_{1}^{(1)}\right\rangle \\
& -\left\langle\varepsilon_2,\left(i\dot{\theta}_{1}  \partial_{x} R_{1}^{(1)}-\dot{z}_{1}\partial_{xx} R_{1}^{(1)}\right)\right\rangle,
\end{aligned}
$$
where  we use the fact that 
$$ -\partial_{xx}R_{k}^{(1)}+R_{k}^{(1)}+\alpha R_{k}^{(1)}R_{k}^{(3)}+\beta |R_{k}^{(1)}|^2 R_{k}^{(1)}=0 \, \, for \, \, k=1,2,$$ 
$$Mod_{\varepsilon_1}=-\sum_{k=1,2}i\dot{z}_k \omega_{k}\partial_{x}R_{k}^{(1)}-\sum_{k=1,2}\dot{\theta}_k \omega_{k} R_{k}^{(1)}$$
and $$\begin{aligned}
D&=\beta\left|\sum_{k=1,2}R_{k}^{(1)}\right|^2\varepsilon_1+2\beta \sum_{k=1,2}\R e(R_{k}^{(1)}\varepsilon_1)\varepsilon_1+\beta \sum_{k=1,2}R_{k}^{(1)}|\varepsilon_1|^2\\
  &\quad +2\beta \left(\sum_{k=1,2}\R e(R_{k}^{(1)}\varepsilon_1)\right)\sum_{k=1,2}R_{k}^{(1)}+\alpha \sum_{k=1,2}R_{k}^{(3)}\varepsilon_1+\alpha \sum_{k=1,2}R_{k}^{(1)}\varepsilon_3 +\beta \sum_{j\neq k=1,2}|R_{k}^{(1)}|^2R_{j}^{(1)}\\
  &\quad +\alpha\sum_{j\neq k=1,2} R_{k}^{(1)}R_{j}^{(3)}
\end{aligned} $$
Observe that since $\partial_{x} R_{1}^{(1)}$ satisfies $\partial_{xxx} R_{1}^{(1)}-\partial_{x} R_{1}^{(1)}+f^{\prime}\left(R_{1}^{(1)}\right) \partial_{x} R_{1}^{(1)}=0$, the first term is zero. For the second term, using the fact that solitons are bounded and the Taylor's expansion it clearly follows that
\begin{equation*}
\left|\left\langle f(R_{1}^{(1)}+R_{2}^{(1)}+\varepsilon_1)-f(R_{1}^{(1)}+R_{2}^{(1)})-f^{\prime}(R_{1}^{(1)}+R_{2}^{(1)}) \varepsilon, \partial_{x} R_{1}^{(1)}\right\rangle\right| \lesssim\|\varepsilon\|_{X}^{2} . 
\end{equation*}

For the third term, using Lemma \ref{solitons},
\begin{equation*}
\left|\left\langle\left(f^{\prime}(R_{1}^{(1)}+R_{2}^{(1)})-f^{\prime}\left(R_{1}^{(1)}\right)\right) \varepsilon_1, \partial_{x} R_{1}^{(1)}\right\rangle\right| \lesssim e^{-4\sqrt{\omega_{\star}}|z|}\|\vec{\varepsilon}\|_{X} . 
\end{equation*}

For the fourth term, first observe that
$$
\begin{aligned}
\left\langle\operatorname{Mod}_{\varepsilon_2}, \partial_{x} R_{1}^{(1)}\right\rangle= & -i\dot{z}_1 \omega_1 \left\|\partial_{x} R_{1}^{(1)}\right\|_{L^{2}(\R)}^{2}-i\dot{z}_2 \omega_2\left\langle\partial_{x} R_{2}^{(1)}, \partial_{x} R_{1}^{(1)}\right\rangle \\
& -\sum_{k=1,2}\dot{\theta}_k \omega_k \left\langle  R_{k}^{(1)}, \partial_{x} R_{1}^{(1)}\right\rangle .
\end{aligned}
$$

Thus from Lemma \ref{solitons} and \eqref{estimative2}
$$
\left\langle\operatorname{Mod}_{\varepsilon_2}, \partial_{x} R_{1}^{(1)}\right\rangle=-\dot{\theta}_{1}\omega_1\left \langle \partial_{x} R_{1}^{(1)},R_{1}^{(1)}\right\rangle+O\left(\left\|\varepsilon
\right\|_{X}^2+|\dot{z}_2|e^{-4\sqrt{\omega_{\star}}|z|}+|\dot{\theta}_2|e^{-4\sqrt{\omega_{\star}}|z|}\right).
$$
Now, from \eqref{4} we have
$$
\left|\frac{\left\langle G, \partial_{x} R_{1}^{(1)}\right\rangle}{\left\|\partial_{x} R_{1}^{(1)}\right\|_{L^{2}(\R)}^{2}}-\frac{z}{|z|} g(|z|)\right| \lesssim e^{-4\sqrt{\omega_{\star}}|z|}.
$$

Next using the definition of $D$ and Lemma \ref{solitons} we get,
$$
\left|\left\langle D, \partial_{x} R_{1}^{(1)}\right\rangle\right| \lesssim e^{-4\sqrt{\omega_{\star}}|z|}.
$$

For the last term, we use \eqref{2orthogonality},
$$
\left|\left\langle\varepsilon_2,\left(\dot{\theta}_{1}  \partial_{x} R_{1}^{(1)}-\dot{z}_{1}\partial_{xx} R_{1}^{(1)}\right)\right\rangle\right| \lesssim \|\vec{\varepsilon}\|_{X}O(|\dot{z}_{1}|).
$$

Gathering all these estimates,
$$
\begin{gathered}
\left|\dot{\theta}_{1}-\frac{z}{|z|} g(|z|)\right| \lesssim \|\vec{\varepsilon}\|_{X}^2 +e^{-4\sqrt{\omega_{\star}}|z|}+\|\vec{\varepsilon}\|_{X}O(|\dot{z}_{1}|).
\end{gathered}
$$
In the other cases, the procedure is similar.
\end{proof}
            Now, we will establish a standard lemma that will allow us to redefine the soliton values at sufficiently large times.
\begin{lemma}\label{linearcom}
There exists $\beta<0$ such that for any $\left(z_{1}, z_{2}\right) \in \mathbb{R}^{2}$ with $|z|$ large enough, there exist linear maps,
$$
B: \mathbb{R}^{2} \rightarrow \mathbb{R}^{2}, \quad V_{j}: \mathbb{R}^{2} \rightarrow \mathbb{R}^{2} \quad \text { for } j=1, 2.
$$
smooth in $\left(z_{1}, z_{2}, \right)$ satisfying
$$
\|B-\beta \operatorname{Id}\| \lesssim e^{-\frac{1}{2}|z|}, \quad\left\|V_{j}\right\| \lesssim e^{-\frac{1}{2}|z|} \quad \text { for } j=1, 2.
$$
and such that the function $W\left(a_{1}, a_{2}\right): \mathbb{R} \rightarrow \mathbb{R}$ defined by
$$
W\left(a_{1}, a_{2}\right)=\sum_{k=1,2}\left[B_{k}\left(a_{1}, a_{2}\right) Y_{k}+ V_{k, j}\left(a_{1}, a_{2}\right) \partial_{x} \Phi_{k}\right]
$$
satisfies for all $k=1,2$,
$$
\left\langle W\left(a_{1}, a_{2}\right), \partial_{x} R_{k}\right\rangle=0, \quad\left\langle W\left(a_{1}, a_{2}\right), Y_{k}\right\rangle=\beta a_{k}.
$$
Moreover, 
$$\left\langle\vec{W}\left(a_{1}, a_{2}\right), \vec{Z}_{k}^{-}\right\rangle=a_{k} \quad \text { and } \quad\left\langle\vec{W}\left(a_{1}, a_{2}\right), \vec{Z}_{k}^{+}\right\rangle=0,
$$ 
where
\[
\vec{W}\left(a_{1}, a_{2}\right)=\binom{W\left(a_{1}, a_{2}\right)}{-\epsilon_0^{k} W\left(a_{1}, a_{2}\right)}.
\]
\end{lemma}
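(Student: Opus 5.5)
The plan is to treat this as a finite-dimensional linear-algebra perturbation problem. We look for $W$ as a linear combination of the four functions $Y_1,Y_2,\partial_x\Phi_1,\partial_x\Phi_2$. The conditions $\langle W,\partial_x R_k\rangle=0$ and $\langle W,Y_k\rangle=\beta a_k$ for $k=1,2$ then form a $4\times4$ linear system for the unknown coefficients
\[
(B_1(a),B_2(a),V_1(a),V_2(a)).
\]
Its matrix is the Gram-type matrix
\[
\mathcal{G}=\begin{pmatrix} \langle Y_j,Y_k\rangle & \langle \partial_x\Phi_j,Y_k\rangle\\ \langle Y_j,\partial_x R_k\rangle & \langle \partial_x\Phi_j,\partial_x R_k\rangle\end{pmatrix}_{j,k=1,2}.
\]
We first compute it in the decoupled limit $|z|\to\infty$.

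On the diagonal blocks ($j=k$) we have three facts. First, $\langle Y_k,Y_k\rangle=\|Y_{\omega_k}\|^2$ is a fixed positive number, which we normalize to $1$. Second, $\langle Y_k,\partial_x\Phi_k\rangle=0$, by parity: $Y_{\omega_k}$ is even, being the ground state of $H_k$, while $\partial_x\Phi_{\omega_k}$ is odd. Third, $\langle\partial_x\Phi_k,\partial_xR_k\rangle\neq0$, since it equals $\|\partial_x\Phi_{\omega_k}\|^2$ up to the phase factor, which we absorb by pairing with the correctly phased component. The off-diagonal entries ($j\neq k$) involve functions centred at $z_1$ and $z_2$. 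Using the exponential decay in Proposition \ref{decaimentoquadratico}(ii) and Lemma \ref{coer}(ii), they are bounded exactly as in Lemma \ref{solitons}(i), giving $O(e^{-c|z|})$; choosing the decay rates close to $1$ gives $O(e^{-\frac12|z|})$. Hence $\mathcal{G}=\mathcal{G}_0+O(e^{-\frac12|z|})$ with $\mathcal{G}_0$ diagonal and invertible, and for $|z|$ large $\mathcal{G}$ is invertible by a Neumann series.

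Solving $\mathcal{G}(B,V)^T=(\beta a,0)^T$ gives
\[
B=\beta\,\mathrm{diag}(\langle Y_k,Y_k\rangle^{-1})+O(e^{-\frac12|z|}),\qquad V=O(e^{-\frac12|z|}).
\]
This is linear in $a$ and smooth in $(z_1,z_2)$, because all entries of $\mathcal{G}$ depend smoothly on the translations.

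We then fix the constant $\beta<0$ in terms of $\varepsilon_0^k$ and $\|Y_{\omega_k}\|$. It is chosen so that the vector statements hold, namely $\langle\vec W,\vec Z_k^-\rangle=a_k$. The pairing of $\vec W=(W,-\epsilon_0^kW)$ with $\vec Z_k^-=(-\varepsilon_0^kY_k,Y_k,\dots)$ gives $-2\varepsilon_0^k\langle W,Y_k\rangle$ plus contributions from the $\zeta,\iota$ slots, which vanish since $\vec W$ has none. This equals $-2\varepsilon_0^k\beta a_k$, so we take $\beta=-1/(2\varepsilon_0^k)$. If the $\varepsilon_0^k$ differ, we instead rescale $Y_k$ by $1/\sqrt{\varepsilon_0^k}$ so that a common $\beta$ works, which is legitimate since the statement only requires $\|B-\beta\,\mathrm{Id}\|$ small. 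The pairing with $\vec Z_k^+=(\varepsilon_0^kY_k,Y_k,\dots)$ gives $\varepsilon_0^k\langle W,Y_k\rangle-\epsilon_0^k\langle W,Y_k\rangle=0$ exactly. The same mechanism as Lemma \ref{coer}(iv) confirms this.

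The main obstacle is bookkeeping rather than analysis. We must make sure the phases $e^{i\omega_kt}$ in $R_k$, as opposed to the unphased $\Phi_k$ used in $W$, and the real inner product $\langle\cdot,\cdot\rangle=\Re\int$ do not destroy the diagonal nondegeneracy. We must also handle the vector pairings, which mix components with different $\varepsilon_0^k$. I would resolve both by working componentwise in the real-inner-product sense and placing the phase into the coefficients $V_k$ as a rotation, which keeps the diagonal entries of modulus $\|\partial_x\Phi_{\omega_k}\|^2$. The exponentially small off-diagonal bounds are routine consequences of the decay estimates.
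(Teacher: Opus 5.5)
Your main line is the paper's own proof. Expand $W$ in $Y_1,Y_2,\partial_x\Phi_1,\partial_x\Phi_2$, use $\langle Y_k,\partial_x\Phi_k\rangle=0$ and exponential decay of the cross terms, invert the near-diagonal $4\times4$ system for $|z|$ large, and read off $\langle\vec W,\vec Z_k^{\mp}\rangle$ from $\langle W,Y_k\rangle=\beta a_k$ with $\beta=-1/(2\varepsilon_0)$.

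What fails is the step where you go beyond the paper: the rescaling meant to give a common $\beta$ when $\varepsilon_0^1\neq\varepsilon_0^2$. The identity $\langle\vec W,\vec Z_k^{-}\rangle=-2\varepsilon_0^k\langle W,Y_k\rangle$ holds identically. So the requirements $\langle W,Y_k\rangle=\beta a_k$ and $\langle\vec W,\vec Z_k^{-}\rangle=a_k$ force $-2\varepsilon_0^k\beta=1$ for each $k$ whenever the same $Y_k$ enters both, however $Y_k$ is normalized. Suppose you rescale $Y_k\mapsto c_kY_k$ in the ansatz and the condition but not in $\vec Z_k^{\pm}$. Equalizing the constants then requires $c_k\propto\varepsilon_0^k$; your $c_k=(\varepsilon_0^k)^{-1/2}$ does not achieve this. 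The inversion then gives $B_k\approx\beta a_k/c_k^2$, so $B$ stays at distance of order one from every multiple of the identity. The smallness of $\|B-\beta\,\mathrm{Id}\|$ rules the trick out rather than licensing it.

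Likewise, a single vector $\vec W=(W,-\epsilon W)$ has $\langle\vec W,\vec Z_k^{+}\rangle=(\varepsilon_0^k-\epsilon)\beta a_k$, which cannot vanish for both $k$ when $a_1a_2\neq0$. Your ``exactly $0$'' tacitly uses a different second component for each $k$. So your argument, like the paper's (which writes one $\epsilon_0$ throughout its proof), proves the lemma only when $\varepsilon_0^1=\varepsilon_0^2$, and $\omega_1\neq\omega_2$ does not guarantee that.

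A correct general version does three things:
\begin{enumerate}
\item it lets $\beta_k=-1/(2\varepsilon_0^k)$ depend on $k$;
\item it shows $\|B-\mathrm{diag}(\beta_1,\beta_2)\|$ is exponentially small;
\item it takes the second component of $\vec W$ to be $-\sum_k\varepsilon_0^k(B_kY_k+V_k\partial_x\Phi_k)$.
\end{enumerate}
The $\vec Z_k^{\pm}$ identities then hold up to $O(e^{-c|z|}|a|)$ cross terms. These can be removed by solving for the coefficients of both components jointly, which is still a near-diagonal system.

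Two smaller points. First, your handling of the phase does not work as written. With the real pairing $\Re\int f\bar g$, real-valued $W$, and $R_k=e^{i\theta_k}\Phi_k$, one has $\langle W,\partial_xR_k\rangle=\cos\theta_k\,\langle W,\partial_x\Phi_k\rangle$. So the corresponding row of your $\mathcal{G}$ vanishes whenever $\cos\theta_k=0$, and a real coefficient $V_k$ cannot absorb a rotation. The paper instead imposes $\langle W,\partial_x\Phi_k\rangle=0$, which implies the phased condition and keeps the diagonal entry equal to $\|\partial_x\Phi_{\omega_k}\|_{L^2}^2$; do the same.

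Second, the rate $e^{-\frac12|z|}$ cannot be obtained by ``choosing decay rates close to $1$''. Proposition~\ref{decaimentoquadratico}(ii) concerns the unscaled profile, while $\Phi_{\omega_k}$ decays exactly like $e^{-\sqrt{1-\omega_k^2}\,|x|}$. For instance, $\langle\partial_x\Phi_1,\partial_x\Phi_2\rangle$ is of exact order $e^{-\min_k\sqrt{1-\omega_k^2}\,|z|}$ (up to a factor $|z|$), which is larger than $e^{-\frac12|z|}$ once some $|\omega_k|>\sqrt3/2$. The paper asserts the same $e^{-\frac12|z|}$ bound without justification. The inversion only needs $O(e^{-c|z|})$ with some $c=c(\omega_1,\omega_2)>0$, and your decay argument does provide that.
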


\begin{proof}
Define
$$
W\left(a_{1}, a_{2}\right)=\sum_{k=1,2}\left\{B_{k} Y_{k}+ V_{k, j} \partial_{x} \Phi_{k}\right\}
$$
our goal is to solve for $B_{k}, V_{k, j}$ in function of $a_{1}, a_{2}$. Using the relations $\left\langle Y_{1}, \partial_{x} \Phi_{1}\right\rangle=$  and the estimate $\left\langle\partial_{x} \Phi_{2}, \partial_{x} \Phi_{1}\right\rangle=O\left(e^{-\frac{1}{2}|z|}\right)$, we observe that the condition $\left\langle W\left(a_{1}, a_{2}\right), \partial_{x} \Phi_{1}\right\rangle=0$ is equivalent to a linear relation between the coefficients in the definition of $W\left(a_{1}, a_{2}\right)$ of the form
$$
\left\|\partial_{x} \Phi\right\|_{L^{2}(\R)}^{2} V_{1, 1}=O\left(e^{-\frac{1}{2}|z|}\right) B_{2}+O\left(e^{-\frac{1}{2}|z|}\right) V_{2, 2}
$$

Similarly, the condition $\left\langle W\left(a_{1}, a_{2}\right), \partial_{x} \Phi_{2}\right\rangle=0$ is equivalent to
$$
\left\|\partial_{x} \Phi\right\|_{L^{2}(\R)}^{2} V_{2, j}=O\left(e^{-\frac{1}{2}|z|}\right) B_{1}+ O\left(e^{-\frac{1}{2}|z|}\right) V_{1,2}
$$

Moreover, since $\langle Y, Y\rangle=1$  and $\left\langle Y_{1}, Y_{2}\right\rangle=O\left(e^{-\frac{1}{2}|z|}\right)$, the conditions $\left\langle W\left(a_{1}, a_{2}\right), Y_{k}\right\rangle=\beta a_{k}$ write
$$
\begin{aligned}
& B_{1}=\beta a_{1}+O\left\{\left(\left|B_{2}\right|+\left|V_{2, 2}\right|\right) e^{-\frac{1}{2}|z|}\right\}, \\
& B_{2}=\beta a_{2}+O\left\{\left(\left|B_{1}\right|+\left|V_{1, 1}\right|\right) e^{-\frac{1}{2}|z|}\right\} .
\end{aligned}
$$

The existence and desired properties of $B_{k}$ and $V_{k, j}$ for $k=1,2$ and $j=1, 2$ thus follow from inverting a linear system for $|z|$ large enough. For the last part using the definition of $\beta$,
$$
\left\langle\vec{W}\left(a_{1}, a_{2}\right), \vec{Z}_{k}^{-}\right\rangle=\left\langle W\left(a_{1}, a_{2}\right),-\epsilon_{0} Y_{k}\right\rangle+\left\langle-\epsilon_{0} W\left(a_{1}, a_{2}\right), Y_{k}\right\rangle=-2 \epsilon_{0} \beta a_{k}=a_{k}
$$
and
$$
\left\langle\vec{W}\left(a_{1}, a_{2}\right), \vec{Z}_{k}^{+}\right\rangle=\left\langle W\left(a_{1}, a_{2}\right), \epsilon_{0} Y_{k}\right\rangle+\left\langle-\epsilon_{0} W\left(a_{1}, a_{2}\right), Y_{k}\right\rangle=0
$$
\end{proof} 

To show the existence of dipole  for   \eqref{system1}, we consider a sequence $T^n\to\infty$ and take $\mathbf{u}^{n}\in X$ as the solution of \eqref{system1} such that \begin{equation}\label{ident0}
\vec{u}_{n}\left(T_{n}\right)=\vec{R}_{1,n}+\vec{R}_{2,n}+\vec{W}\left(a_{1, n}, a_{2, n}\right), 
\end{equation} 
where
$$
\vec{R}_{k, n}=T(\theta_k)\vec{\Phi}_{\omega_k}\left(x-z_{k, n}\right)\, \, \, \text{for}\, \, \,  k=1,2.$$
             We consider the following lemma.
\begin{lemma}\label{direction}
				For all $t \in [T_0,T_n]$ and $j\in \{1,2\}$. The following estimate holds,
	\[\left|\frac{d}{dt}\lambda_j^{\pm}(t)\pm \epsilon_{0}^j \lambda_j^{\pm}(t)\right|\leq \|\vec{\varepsilon}\|_{X}^2+t^{-3}.\]
			\end{lemma}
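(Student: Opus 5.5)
The estimate comes from differentiating $\lambda_j^{\pm}(t)=\langle \vec{\varepsilon}(t),\vec{Z}_j^{\pm}(t)\rangle$ in time and feeding in the equation for $\vec{\varepsilon}=\vec{u}-\vec{R}_1-\vec{R}_2$. Here $\vec{Z}_j^{\pm}=\vec{Z}_{\omega_j}^{\pm}(\cdot-z_j)$, with the phase $T(\theta_j)$ attached as for $\vec R_j$. The time derivative produces two contributions:
\[
\frac{d}{dt}\lambda_j^{\pm}=\langle \partial_t\vec{\varepsilon},\vec{Z}_j^{\pm}\rangle+\langle \vec{\varepsilon},\partial_t\vec{Z}_j^{\pm}\rangle .
\]
Since $\partial_t\vec{Z}_j^{\pm}=-\dot z_j\partial_x\vec{Z}_j^{\pm}+\ii(\dot\theta_j-\omega_j)A\vec Z_j^{\pm}+\ii\omega_j A\vec Z_j^{\pm}$, the second contribution is bounded by $\|\vec\varepsilon\|_X(|\dot z_j|+|\dot\theta_j-\omega_j|)$, up to the part that combines with the rotating frame. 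By \eqref{estimative2} this is $O(\|\vec\varepsilon\|_X^2)$, or $O(\|\vec\varepsilon\|_X^3)$. The $\ii\omega_j$ rotation term is absorbed by working throughout in the frame $T(-\theta_j)$, in which the linearized operator around $\vec R_j$ becomes $JH_j$ (up to the normalization of $J$).

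For the first contribution, I would write the equation for $\vec\varepsilon$ exactly as in \eqref{estimative3}, but with two solitons:
\[
\partial_t\vec\varepsilon=JE''(\vec R_1+\vec R_2)\vec\varepsilon+\operatorname{Mod}+J\vec G+O(\|\vec\varepsilon\|_X^2).
\]
Here $\vec G$ is the interaction term $E'(\vec R_1+\vec R_2)-E'(\vec R_1)-E'(\vec R_2)$, and $\operatorname{Mod}$ collects the terms with $\dot\theta_k-\omega_k$ and $\dot z_k$ multiplying $\ii A\vec R_k$ and $\partial_x\vec R_k$. Pairing with $\vec Z_j^{\pm}$ then gives three pieces.

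\textbf{(a) The linear term.} Split $E''(\vec R_1+\vec R_2)=E''(\vec R_j)+[E''(\vec R_1+\vec R_2)-E''(\vec R_j)]$. The bracket is localized near $z_k$ with $k\neq j$, while $\vec Z_j^{\pm}$ decays exponentially around $z_j$ by Lemma \ref{coer}(ii). So its contribution is $\lesssim e^{-c|z|}\|\vec\varepsilon\|_X\le \|\vec\varepsilon\|_X^2+e^{-2c|z|}$. For the main part I use the eigenfunction property of Lemma \ref{coer}(i), read as: $\vec Z_j^{\pm}$ are eigenfunctions of the adjoint $(JH_j)^*=-H_jJ$ with eigenvalues $\mp\epsilon_0^j$. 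Then $\langle JH_j\vec\varepsilon,\vec Z_j^{\pm}\rangle=\mp\epsilon_0^j\lambda_j^{\pm}$, which is the leading term in the statement. This is the step I expect to be the main obstacle. Lemma \ref{coer}(i) as written only says "eigenfunctions of $H_j$", whereas what is needed is the Pego--Weinstein relation for $JH_j$, i.e.\ $H_jJ\vec Z_j^{\pm}=\pm\epsilon_0^j\vec Z_j^{\pm}$. I would verify it directly from $H_j\vec Y_{\omega_j}=-\epsilon_0^j\vec Y_{\omega_j}$, the explicit form of $J$ and the definitions of $\vec Y^{\pm},\vec Z^{\pm}$, adjusting normalizations of $J$ and $\epsilon_0^j$ as needed.

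\textbf{(b) The modulation terms.} Pairing $\ii A\vec R_j$ and $\partial_x\vec R_j$ with $\vec Z_j^{\pm}$ gives zero. For $\partial_x\vec R_j$ this is Lemma \ref{coer}(iii). For the phase direction $\vec\Upsilon_j$ it follows from the symplectic orthogonality of the kernel to the unstable and stable eigenvectors. The cross pairings with $k\ne j$ are $O(e^{-c|z|})$. Hence this piece is $\lesssim(|\dot\theta_k-\omega_k|+|\dot z_k|)e^{-c|z|}\lesssim \|\vec\varepsilon\|_X^2+e^{-2c|z|}$.

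\textbf{(c) The interaction term.} By Lemma \ref{solitons}(i)--(iii), $|\langle J\vec G,\vec Z_j^\pm\rangle|\lesssim e^{-c|z|}$ for an appropriate $c$ tied to $\omega_\star$.

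Finally, to convert the exponential interaction into $t^{-3}$ I would use the bootstrap regime on $[T_0,T_n]$. There the separation satisfies $|z(t)|\ge(1-o(1))\log t$, so with $\omega_\star$ adjusted as remarked in Lemma \ref{solitons} we have $e^{-c|z|}\le t^{-3}$. The nonlinear remainder contributes $O(\|\vec\varepsilon\|_X^2)$ via Taylor expansion and boundedness of the solitons. Collecting (a)--(c) gives $\bigl|\frac{d}{dt}\lambda_j^\pm\pm\epsilon_0^j\lambda_j^\pm\bigr|\lesssim\|\vec\varepsilon\|_X^2+t^{-3}$.
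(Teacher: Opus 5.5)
Your outline is the same as the paper's own proof: differentiate $\lambda_j^\pm$, insert the equation for $\vec\varepsilon$, invoke an eigen-relation for $\vec Z_j^\pm$, and bound the rest. However, two of its steps fail. The paper makes the same two leaps without justification, so it cannot fill them for you.

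\textbf{The eigen-relation.} The identity $H_jJ\vec Z_j^{\pm}=\pm\epsilon_0^j\vec Z_j^{\pm}$ that you flagged cannot be verified from the definitions, for any normalization of $J$ or $\epsilon_0^j$. Everything is translation invariant, so take $z_j=0$. Also take $\alpha>0$, since otherwise $H_j$ has infinitely many negative directions.

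The last two rows of $H_j\vec Y_{\omega_j}=-\epsilon_0^j\vec Y_{\omega_j}$ force $Y^{(4)}_{\omega_j}=0$ and $Y^{(3)}_{\omega_j}=-\frac{2\alpha}{\alpha+\epsilon_0^j}\Re(\Phi_j\overline{Y^{(1)}_{\omega_j}})$. So the fourth component of $H_jJ\vec Z_j^{\pm}$ is $\alpha\cdot\frac1\alpha\partial_xY^{(3)}_{\omega_j}=\partial_xY^{(3)}_{\omega_j}$, while that of $\pm\epsilon_0^j\vec Z_j^\pm$ is $\pm\epsilon_0^jY^{(4)}_{\omega_j}=0$.

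Moreover $Y^{(3)}_{\omega_j}\not\equiv0$. Otherwise $\Re(\Phi_j\overline{Y^{(1)}_{\omega_j}})\equiv0$, and for every such $\vec\eta$
\[
\langle H_j\vec\eta,\vec\eta\rangle=2\langle L_-\eta_1,\eta_1\rangle+2\|\eta_2-\ii\omega_j\eta_1\|_{L^2}^2+\alpha\|\eta_3\|_{L^2}^2+\alpha\|\eta_4\|_{L^2}^2\ge0,\qquad L_-=-\partial_x^2+1-\omega_j^2+\mathbbm{m}\Phi_j^2\ge0,
\]
which contradicts $\langle H_j\vec Y_{\omega_j},\vec Y_{\omega_j}\rangle<0$.

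Consequently $\langle JH_j\vec\varepsilon,\vec Z_j^\pm\rangle\pm\epsilon_0^j\lambda_j^\pm$ contains terms such as $-\langle\varepsilon_4,\partial_xY^{(3)}_j\rangle$. These are linear in $\vec\varepsilon$, of size $O(\|\vec\varepsilon\|_X)$, and \eqref{2orthogonality} does not remove them. Your step (b) breaks for the same reason: the vanishing of $\langle\ii A\vec R_j,\vec Z_j^\pm\rangle$ presupposes genuine eigenvectors of $(JH_j)^*$.

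The ansatz $(Y,\pm\nu Y)$, $(\pm\nu Y,Y)$ is borrowed from the real scalar Klein--Gordon equation, whose Hessian is block diagonal, $\mathrm{diag}(L,1)$. Even there the rate is $\nu=\sqrt{|\text{negative eigenvalue of }L|}$, not the negative eigenvalue of the Hessian.

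What you need is an actual eigenpair $JH_j\vec{\mathcal Y}_j^{\pm}=\pm e_j\vec{\mathcal Y}_j^{\pm}$ with $e_j>0$. Its existence must be proved, and it fails whenever the Grillakis--Shatah--Strauss stability condition holds at $\omega_j$, because then $JH_j$ has no nonzero real eigenvalue. Given such a pair, set $\vec Z_j^{\pm}:=H_j\vec{\mathcal Y}_j^{\pm}$. Then $(JH_j)^*\vec Z_j^{\pm}=-H_jJH_j\vec{\mathcal Y}_j^{\pm}=\mp e_j\vec Z_j^{\pm}$, and $\vec Z_j^\pm$ is automatically orthogonal to $\ker H_j$. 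With this choice (a) and (b) go through, with $e_j$ in place of $\epsilon_0^j$.

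\textbf{The $t^{-3}$ bound.} Your final conversion $e^{-c|z|}\le t^{-3}$ is false.

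The exponents in Lemma~\ref{solitons} are multiples of $\sqrt{\omega_\star}$, with $6\sqrt{\omega_\star}=\frac23\min_k\sqrt{1-\omega_k^2}$. Shrinking $\omega_\star$ weakens those bounds rather than strengthening them, despite the remark in that proof. The interaction decay is fixed by the tails $\Phi_{\omega_k}\sim\kappa e^{-\sqrt{1-\omega_k^2}|x|}$. In the bootstrap regime $|z(t)|=2\log t+O(1)$, so $e^{-c|z|}\gtrsim t^{-2c}\ge t^{-2}$ for every admissible $c\le1$.

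This is not merely a loss in the estimates. Consider the part of the interaction that is linear in the tail of $\vec R_k$ near $z_j$. In the first component of $E'$ it is $2\beta\bigl(2|R_j^{(1)}|^2R_k^{(1)}+(R_j^{(1)})^2\overline{R_k^{(1)}}\bigr)+2\alpha R_j^{(3)}R_k^{(1)}$, and in the third it is $2\alpha\Re\bigl(R_j^{(1)}\overline{R_k^{(1)}}\bigr)$. Its size is $\approx e^{-\sqrt{1-\omega_k^2}|z|}$, comparable to the term $c_1g(|z|)$ in \eqref{4} that produces the logarithmic law. Its pairing with $\vec Z_j^\pm$ is generically nonzero, up to an oscillating relative phase.

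So this route gives at best $\|\vec\varepsilon\|_X^2+O(t^{-2})$. That weaker bound is all the paper later uses: the proof of Lemma~\ref{Bootstrap3} integrates a $\tau^{-2}$ forcing, and the $t|\lambda_k^-|$ term in the proof of Lemma~\ref{T0} corresponds to the same $t^{-2}$ forcing.
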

\begin{proof}
    Note that 
\begin{equation*}
					\begin{aligned}
						&\frac{d}{dt}\lambda_{j}^{\pm}(t)
						=\frac{d}{dt}\langle \vec{\varepsilon},\vec{Z}_{j}^{\pm}\rangle\\
                    &=\langle \partial_{t}\vec{\varepsilon}, \vec{Z}_{j}^{\pm}\rangle
                    +\langle \vec{\varepsilon},\partial_{t}\vec{Z}_{j}^{\pm}\rangle\\&=\pm \epsilon_{0}^{j}\int_{\R}\partial_{t}\varepsilon_1 Y_{j}\,dx+\int_{\R}\partial_{t}\varepsilon_2 Y_{j}\,dx+\int_{\R}\partial_{t}\varepsilon_3 Y_{j}\,dx+\int_{\R}\partial_{t}\varepsilon_4 Y_{j}\,dx+\langle \vec{\varepsilon},\partial_{t}Z_{j}^{\pm}\rangle\\&=\pm \epsilon_{0}^{j}\int_{\R} \left(-\varepsilon_2-i\sum_{k=1}^{2}\omega_{k}R_{k}^{(1)}+\sum_{k=1}^{2}\dot{\theta}_{k}R_{k}^{(1)}+\sum_{k=1}^{2}\dot{z}_{k}\partial_{x}R_{k}^{(1)}) \right) Y_j\,dx\pm \epsilon_{0}^{j}\int_{\R} \varepsilon_1 \partial_t Y_{j}(t)\,dx\\
 &\quad+\int_{\R} \left(-\partial_{xx}\varepsilon_1+\varepsilon_1+\sum_{k=1}^{2}R_{k}^{(1)}-\sum_{k=1}^{2}\partial_{xx}R_{k}^{(1)}\right) Y_{j}^{ \pm,1}(t)\,dx \\&\quad+\alpha\int_{\R}\left( \sum_{k=1}^{2} R_{k}^{(1)} R_{k}^{(3)}+ \sum_{k=1}^{2} R_{k}^{(1)} \varepsilon_3+\sum_{k=1}^{2} R_{k}^{(3)} \varepsilon_1\right)Y_{j}\,dx\\
 &\quad +\beta \int_{\R}\left( \left(\sum_{k=1}^{2} |R_{k}^{(1)}|^2\right) \left(\sum_{k=1}^{2} R_{k}^{(1)}\right) +\sum_{k=1}^{2} |R_{k}^{(1)}|^2 \varepsilon_1\right)Y_{j}\,dx\\
 &\quad+\beta\int_{\R} \left(2\sum_{k=1}^{2} \Re(R_{k}^{(1)}\varepsilon_1)\left(\sum_{k=1}^{2} R_{k}^{(1)}\right)+2 \Re(R_{1}^{(1)}R_{2}^{(1)})\left(\sum_{k=1}^{2} R_{k}^{(1)}\right)+  \left(\sum_{k=1}^{2} R_{k}^{(1)}\right)|\varepsilon_1\right)Y_{j}\,dx\\&\quad+\int_{\R} \left(+2\sum_{k=1}^{2} \Re(R_{k}^{(1)}\varepsilon_1)\varepsilon_1+2 \Re(R_{1}^{(1)}R_{2}^{(1)})\varepsilon_1+ |\varepsilon_1|^2\varepsilon_1\right)Y_{j}\,dx\\
 &\quad-\int_{\R}\left(\sum_{k=1}^{2}\dot{\theta}_{k}\omega_{k}R_{k}^{(1)}+i\sum_{k=1}^{2}\dot{z}_{k}\omega_{k}R_{k}^{(1)}\right)Y_{j}\,dx\\
 &\quad+\int_{\R}\left(\partial_{x}\varepsilon_4+\sum_{k=1}^{2}\partial_{x}R_{k}^{(4)}- \sum_{k=1}^{2}\dot{z}_k R_{k}^{(3)}\right)Y_{j}\,dx\\&\quad+\int_{\R}\left(\partial_{x}\varepsilon_3+\sum_{k=1}^{2}\partial_{x}R_{k}^{(3)}- \sum_{k=1}^{2}\dot{z}_k R_{k}^{(4)}+\left(\left|\varepsilon_1+\sum_{k=1}^{2}R_{k}^{(1)}\right|\right)\right)Y_{j}\,dx\\&\quad-\int_{\R}\dot{z}_{j}\varepsilon_2 \partial_x Y_{j}\,dx-\int_{\R}\dot{z}_{j}\varepsilon_3 \partial_x Y_{j}\,dx-\int_{\R}\dot{z}_{j}\varepsilon_4 \partial_x Y_{j}\,dx.
					\end{aligned} 
				\end{equation*}
Then, combining the previous estimates obtained in \eqref{estimative2} and using the eigenvector construction of $H_j$ we have
$$\begin{aligned}
    \frac{d}{dt}\lambda_{j}^{\pm}(t)&=-\langle  \vec{\varepsilon} , H_j\vec{Y}_{j}^{\pm}\rangle +\|\vec{\varepsilon}\|_{X}^2+t^{-3}\\
						&=\epsilon_{0}^j \lambda_{j}^{\pm}(t)+\|\vec{\varepsilon}\|_{X}^2+t^{-3}.
\end{aligned}$$
                        Hence, the result follows.
\end{proof}
\subsection*{ Construction of a family of 2 -solitary waves.}

\begin{proposition}\label{prop1}
 There exist $n_{0}>0$ and $T_{0}>0$ large enough, such that for all $n \geq n_{0}$, there exist $\bar{z}_{n}>0,$  {$ |a_{k, n} |\leq T_{n}^{-3 / 2}$} with $k=1,2$ and  $\delta>0$ small enough, for any
\begin{equation}\label{condlast}
\left\{\begin{array}{l}
\left(z_{1,n}, z_{2,n}\right) \in \mathbb{R}^{2 } \text { with }\left|z_{1,n}-z_{2,n}\right|>\frac{5}{\sqrt{\omega_{\star}}}|\log \delta|   \\
\left|e^{\frac{1}{2} \bar{z}_{n}}-T_{n}\right|< \log T_{n}, \,\,  \left|z_{n}\left(T_{n}\right)\right|=\bar{z}_{n}, \quad \lambda_{k}^{-}(T_n)=a_{k,n} \\
 \lambda_{k}^{+}(T_n)=0 \quad 
 {\| \vec{\varepsilon}(T_n) \|_{X}\leq \delta}\text { with \eqref{2orthogonality} }
\end{array}\right.  
\end{equation}

there exists  {$|\left(\lambda_{1}^{-}(T_n), \lambda_{2}^{-}(T_n)\right)|\leq \delta^{5/4}$} such that the solution $\vec{u}_n$ of \eqref{system1} with the initial data
\begin{equation}\label{datalast}
\vec{u}_n(T_n)=\vec{R}_{1}(T_n)+\vec{R}_{2}(T_n)+\vec{W}\left(\lambda_{1}^{-}(T_n), \lambda_{2}^{-}(T_n)\right)    
\end{equation}
is a 2-solitary wave. This is, such that the corresponding solution $\vec{u}_{n}$ of \eqref{system1} exists on $\left[T_{0}, T_{n}\right]$, satisfies the decomposition of Lemma \ref{2lemamodulation},
\begin{equation}\label{boostrap1}
\vec{u}_{n}(t, x)=\sum_{k=1,2}\vec{R}_{k,n}(t)+\vec{\varepsilon}_n(t, x):=\sum_{k=1,2} T(\theta_k)\vec{\Phi}_{\omega_k}\left(x-z_{k, n}(t)\right)+\vec{\varepsilon}_n(t, x)    
\end{equation}
and verifies the following uniform estimates for all $t \in\left[T_{0}, T_{n}\right]$
 \begin{equation}\label{estiprop1}
\left|\left|z_{n}(t)\right|-2 \log (t)\right| \lesssim \log^{-1/2} t,  \quad\left\|\vec{\varepsilon}(t)\right\|_{X} \lesssim t^{-1} \log ^{-3 / 2} t .
\end{equation}
 \end{proposition}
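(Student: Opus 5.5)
We argue by a backward bootstrap from $T_n$ to $T_0$, combined with a topological (Brouwer-type) selection of the unstable parameters $\left(\lambda_1^-(T_n),\lambda_2^-(T_n)\right)$, following the scheme of \cite{ARYAN}.

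\textbf{Step 1: setup and bootstrap.} Fix $n$ and choose $\bar z_n$ with $|e^{\frac12\bar z_n}-T_n|<\log T_n$, which is compatible with $|z_n(t)|\approx 2\log t$ at $t=T_n$. For $(a_1,a_2)$ in the ball $\{|a|\le T_n^{-3/2}\}$, take the initial data \eqref{datalast}. By Lemma \ref{linearcom}, the correction $\vec W(a_1,a_2)$ gives $\lambda_k^-(T_n)=a_k$, $\lambda_k^+(T_n)=0$, and the orthogonality conditions \eqref{2orthogonality}. Theorem \ref{wellpo} provides the solution backward in time, and Lemma \ref{2lemamodulation} provides the decomposition \eqref{boostrap1} as long as $\vec u_n$ stays in $U_\varepsilon$. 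Let $T^\ast$ be the infimum of times $t\in[T_0,T_n]$ on which the following bootstrap bounds hold:
\[
\big||z_n(s)|-2\log s\big|\le \log^{-1/2}s,\qquad \|\vec\varepsilon(s)\|_X\le s^{-1}\log^{-3/2}s,\qquad |\lambda^-(s)|\le s^{-3/2}.
\]

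\textbf{Step 2: improving the bounds on $[T^\ast,T_n]$.}

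\emph{Energy part.} We use a localized Lyapunov functional
\[
\mathcal F=\mathcal S_{\text{loc}}(\vec u_n)-\mathcal S_{\text{loc}}\Big(\textstyle\sum_k\vec R_k\Big),
\]
built from $E$ together with the conserved quantities $Q_2$, weighted by $\omega_1$ and $\omega_2$ through a cutoff that moves between the two solitons (the analogue of $\mathcal S_j$ glued together). Expanding $\mathcal F$ to second order and applying the coercivity of Lemma \ref{coer}(v) to each soliton, the cross terms are controlled by Lemma \ref{solitons}. This yields
\[
\|\vec\varepsilon\|_X^2\lesssim \mathcal F+\sum_k\big(|\lambda_k^+|^2+|\lambda_k^-|^2\big)+e^{-|z|}\|\vec\varepsilon\|_X .
\]
A time derivative estimate of the form $|\dot{\mathcal F}|\lesssim t^{-1}\|\vec\varepsilon\|_X^2+t^{-2}\|\vec\varepsilon\|_X$, integrated from $t$ to $T_n$, gives the strict improvement $\|\vec\varepsilon\|_X\le \tfrac12 t^{-1}\log^{-3/2}t$.

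\emph{Stable directions.} For $\lambda^+$, Lemma \ref{direction} gives $\frac{d}{dt}\lambda^+ = -\epsilon_0\lambda^+ + O(t^{-2})$. Integrating backward from $\lambda^+(T_n)=0$ yields $|\lambda^+(t)|\lesssim t^{-2}$.

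\emph{Distance.} For $z$, the modulation law \eqref{estimative2.5} together with Lemma \ref{solitons}(iii) gives a perturbed ODE of the form $\ddot z=-c\,e^{-z}+O(t^{-2}\log^{-1/2}t)$ (same-sign solitons attract). Writing $z=2\log t+\log(\cdot)+r$ and linearizing around the explicit solution, we close the bound on $r$ by integrating from $T_n$, where the choice of $\bar z_n$ makes $r(T_n)$ small. If the velocity at $T_n$ must also be matched, we add it as an extra modulation parameter.

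\textbf{Step 3 (main obstacle): the unstable modes.} Lemma \ref{direction} gives $\frac{d}{dt}\lambda^- = \epsilon_0\lambda^- + O(t^{-2}\log^{-3}t)$, which is repulsive backward in time. This bound cannot be improved for all data, so we argue by contradiction. Suppose that for every $a$ in the ball the exit time satisfies $T^\ast(a)>T_0$, with exit occurring through $|\lambda^-|=t^{-3/2}$. Then
\[
\frac{d}{dt}\Big(t^{3}|\lambda^-|^2\Big)\Big|_{T^\ast}
\]
has a definite sign: it is outgoing transversally backward, since the term $\epsilon_0$ dominates $3/(2t)$. Hence $a\mapsto T^\ast(a)$ is continuous, and the map
\[
a\mapsto \big(T^\ast(a)\big)^{3/2}\lambda^-\big(T^\ast(a)\big)
\]
is a continuous retraction of the ball onto its boundary sphere, equal to the identity on that sphere. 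This contradicts Brouwer's theorem. Therefore some $a_n$ gives $T^\ast=T_0$, which proves \eqref{estiprop1}.

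The delicate points are the transversality at exit, which needs the error $O(t^{-2})$ to be much smaller than $\epsilon_0 t^{-3/2}$, and the closure of the distance ODE. We expect the latter to be the hardest step: it requires sharp bounds $O(t^{-2}\log^{-1/2}t)$ on the modulation errors, where the coupling to the wave components $(v,n)$ enters through the terms in $D$.
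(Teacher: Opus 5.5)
The genuine gap is in how you treat the distance. You fix $\bar z_n$ in the window $|e^{\frac12\bar z_n}-T_n|<\log T_n$, then try to close $||z_n(t)|-2\log t|\le\log^{-1/2}t$ by integrating the perturbed ODE backward from $T_n$, and you run Brouwer only on $\lambda^-\in\mathcal B(1)$. This cannot close. The linearization of $\ddot z=-c\,e^{-z}$ around its solution $z_0=2\log t+\log\frac c2$ is $\ddot r=2t^{-2}r$, with modes $t^{2}$ and $t^{-1}$. The $t^{-1}$ mode is the time translation of the profile, and it grows as $t$ decreases. With $r(T_n)=\dot r(T_n)=0$, a forcing $f$ produces $r(t)=\frac13\int_t^{T_n}(s^2t^{-1}-t^2s^{-1})f(s)\,ds$. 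So a modulation error of size $s^{-2}\log^{-a}s$, for any $a>0$, creates a deviation of order $T_n/(t\log^{a}T_n)$, which is unbounded as $n\to\infty$ at fixed $t$.

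Sharper modulation bounds therefore do not help. Matching the velocity to the position at $T_n$ does not help either: it only removes the $t^2$ mode. The paper does this matching implicitly to reduce the dynamics to the first-order law \eqref{deriv}. The amount of $t^{-1}$ mode needed to absorb the forcing depends on the whole trajectory, hence on the solution, so it can only be found by shooting. That is why the statement asserts the \emph{existence} of $\bar z_n$.

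In Lemma~\ref{T0} the paper sets $A(T_n)=T_n+\hat\xi\,T_n\log^{-1/2}T_n$ with $A=(\kappa g_0)^{-1/2}e^{\frac12|z|}$. It shows that $D=(A-t)^2t^{-2}\log t$ satisfies $\dot D\approx-2t^{-1}<0$ at $D=1$. This is an exit condition backward in time, exactly like $\dot L<0$ at $L=1$ for $L=t^3|\lambda^-|^2$. It then applies the no-retraction theorem on $\mathbb D=[-1,1]\times\mathcal B(1)$ to $(\hat\xi,\hat a)\mapsto(\Psi_1,\Psi_2)(T^\star)$, with $\Psi_1=(A-t)t^{-1}\log^{1/2}t$ and $\Psi_2=t^{3/2}\lambda^-$. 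Your Step 3 has to become this three-parameter argument. Your two-dimensional retraction has no mechanism that keeps $z_n$ in its window.

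Your energy step also does not close as written. Under the bootstrap bound $\|\vec\varepsilon(s)\|_X\le s^{-1}\log^{-3/2}s$, the term $t^{-2}\|\vec\varepsilon\|_X$ in your bound for $\dot{\mathcal F}$ integrates over $[t,T_n]$ to about $\frac12t^{-2}\log^{-3/2}t$. Coercivity then only returns $\|\vec\varepsilon(t)\|_X\lesssim t^{-1}\log^{-3/4}t$, which is weaker than the hypothesis. The term $t^{-1}\|\vec\varepsilon\|_X^2$ integrates to a multiple of $t^{-2}\log^{-3}t$ with an uncontrolled constant, so it cannot produce the factor $\frac12$ either. You need a derivative bound with no linear term of size $t^{-2}\|\vec\varepsilon\|_X$ and with an extra factor $\log^{-1}t$ on the quadratic part; that gain is where $|\partial_t\phi_k|\lesssim\chi_{\Omega_k}/(t\log t)$ enters. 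The paper aims for this by differentiating $\mathcal S$ along the full solution $\vec u$. Conservation of $E$ and $Q_2$ then leaves only cutoff-transport terms (Lemma~\ref{VariacionS}), and $\vec\varepsilon$ enters only through the static expansion of Lemma~\ref{newS} and the coercivity of Proposition~\ref{coercivity2}.

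Finally, check your signs for $\lambda^\pm$. With the signs you wrote, $\dot\lambda^+=-\epsilon_0\lambda^+$ would be unstable and $\dot\lambda^-=\epsilon_0\lambda^-$ stable backward in time, the opposite of what you use. Your conclusions, and the way the paper actually uses Lemma~\ref{direction}, require $\dot\lambda^\pm\approx\pm\epsilon_0\lambda^\pm$.
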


For the sake of simplicity we drop the index $n$ (except for $T_{n}$ ) in the following sections. Our goal now is to prove Proposition \ref{prop1} using a bootstrap argument, integration of a differential system of geometrical parameters and energy estimates. Let $T_{0}>0$ independent of $n$ to be determined later and $\vec{u}$ be the corresponding solution of \eqref{system1}. We now define the maximal time interval $\left[T^{\star}, T_n\right]$ on which suitable exponential estimates hold.
			
			\begin{definition}\label{definia}
				Let $T^{\star}$ be the infimum of $T \geq T_{0}$ such that for all $t \in\left[T, T_n\right]$, both  \eqref{estiprop1} and \eqref{hipotese1} are hold.   
			\end{definition}

\subsection{ Bootstrap setting.}In this section we aim to improve by a certain factor the estimates presented in Proposition \ref{prop1}, see \eqref{estiprop1}. The proof of Proposition \ref{prop1} is based on the following bootstrap estimates, for $C \gg 1$ to be chosen later, we consider the following hypotheses: for all $t\in [T_0,T_n]$ assume \eqref{boostrap1} and 
\begin{equation}\label{hipotese1}
  \begin{aligned}
&\left| (\kappa g_0 )^{-1/2}e^{\frac{|z(t)|}{2}}-t\right|\leq \log^{-1/2}  t, 
\sum_{k=1,2}\left|\lambda_{k}^{+}(t)\right| \leq t^{-3 / 2}\quad \sum_{k=1,2}\left|\theta_{k}(t)\right| \leq t^{-1}, \\
&\sum_{k=1,2}\left|\lambda_{k}^{-}(t)\right| \leq t^{-3 / 2}, \quad\|\vec{\varepsilon}(t)\|_{X} \leq C^{\star} t^{-1} \log^{-3 / 2} t \, \, (see \, \, \eqref{estimative2}).
\end{aligned}  
\end{equation}
\begin{remark}\label{remarklast}
    \begin{enumerate}[(i)]

        \item Note that the estimate on $z$ gives us a more precise estimate,

\begin{equation*}
|z(t)|=2 \log t+O\left(\log ^{-1 / 2} t\right)`
\end{equation*}

where $C>0$ is a constant depending only on $p$. We can thus deduce,
$$
||z(t)|-2 \log t| \lesssim t^{-1} \log ^{-1 / 2} t.
$$
\item Note that from the assumption in \eqref{condlast} and \eqref{hipotese1} we have that for sufficiently large $T_n$, we have $\vec{\varepsilon}(T_n)=0$, which implies from \eqref{datalast} that $\vec{u}(T_n)=\vec{R}_1(T_n)+\vec{R}_2(T_n)$.
     \end{enumerate}  
\end{remark}
Let us then consider the following lemma where we establish some estimates for the modulated parameters

				\begin{lemma}\label{modu1}
For all $t \in\left[T_0, T_{n}\right]$, the following hold.
\begin{enumerate}[(i)]

    \item Estimates on $z_{k}$ and $z$. We have

\begin{equation*}
\sum_{k=1,2}\left|\dot{z}_{k}\right| \lesssim t^{-1} \log ^{-3/2} t, \quad \sum_{k=1,2}|\Phi_{k}(|z|)| \lesssim t^{-3} 
\end{equation*}

\item  Estimates on $\theta_{k}$. We have

\begin{equation*}
\sum_{k=1,2}| | \dot{\theta}_{k}\left|-t^{-2}\right| \lesssim t^{-2} \log ^{-1} t. 
\end{equation*}

\item Estimates on  {$\partial_{t} G$}.  We have
 
\begin{equation*}
\left\|\partial_{t} G\right\|_{L^{2}(\R)}\lesssim t^{-3}.
\end{equation*}
\end{enumerate}
\end{lemma}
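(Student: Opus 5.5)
The three estimates follow directly from the bootstrap hypotheses \eqref{hipotese1} on $[T_0,T_n]$ (more precisely on $[T^\star,T_n]$, cf. Definition \ref{definia}), combined with the modulation estimates \eqref{estimative2} and \eqref{estimative2.5} of Lemma \ref{2lemamodulation} and the interaction bounds of Lemma \ref{solitons}. No new spectral input is needed; everything comes down to inserting the bootstrap sizes into those inequalities.

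\emph{Step (i).} Apply the modulation equations of Lemma \ref{2lemamodulation} to each soliton. Taking the inner product of the equation for $\vec\varepsilon$ with $\partial_x\vec R_k$ and using the orthogonality \eqref{2orthogonality} gives
\[
|\dot z_k|\lesssim \|\vec\varepsilon\|_X+\|\vec\varepsilon\|_X^2+e^{-4\sqrt{\omega_\star}|z|}.
\]
The factor $\|\vec\varepsilon\|_X$ appears linearly because the term $\langle \vec\varepsilon,\partial_t(\cdots)\rangle$ is only first order. The bootstrap bound $\|\vec\varepsilon\|_X\le C^\star t^{-1}\log^{-3/2}t$ then gives the first claim, provided the interaction term is $O(t^{-1}\log^{-3/2}t)$.

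For this, the $z$-hypothesis in \eqref{hipotese1} gives $e^{|z|/2}=(\kappa g_0)^{1/2}t+O(\log^{-1/2}t)$, that is, $|z|=2\log t+O(t^{-1}\log^{-1/2}t)$. Combined with the decay \eqref{ground} of $q_{\omega_k}$, this yields $q_{\omega_k}(|z|)\lesssim e^{-\sqrt{1-\omega_k^2}|z|}$. Provided the exponent normalization makes $\sqrt{1-\omega_k^2}|z|\ge 3\log t$ (after the usual adjustment of $\omega_\star$ noted after Lemma \ref{solitons}(i)), we get $\sum_k|\Phi_k(|z|)|\lesssim t^{-3}$. The same argument bounds $e^{-4\sqrt{\omega_\star}|z|}$ by a negative power of $t$ that is at least $t^{-2}$.

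\emph{Step (ii).} Use \eqref{estimative2.5}:
\[
\bigl|\dot\theta_k\mp\tfrac{z}{|z|}g(|z|)\bigr|\lesssim \|\vec\varepsilon\|_X^2+e^{-4\sqrt{\omega_\star}|z|}+\|\vec\varepsilon\|_X|\dot z_1|.
\]
By (i) and the bootstrap, the right-hand side is $O(t^{-2}\log^{-3}t)+O(t^{-2-})$. By \eqref{3.5}, $g(|z|)=g_0q_{\omega_1}(|z|)+O(|z|^{-1}e^{-\sqrt{\omega_\star}|z|})$. With \eqref{ground} and $\kappa g_0e^{-|z|}=t^{-2}(1+O(t^{-1}\log^{-1/2}t))$, which follows from the $z$-hypothesis, this gives
\[
g(|z|)=t^{-2}+O(t^{-2}\log^{-1}t),
\]
since $|z|^{-1}\sim(2\log t)^{-1}$. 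Taking absolute values gives $\bigl||\dot\theta_k|-t^{-2}\bigr|\lesssim t^{-2}\log^{-1}t$.

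\emph{Step (iii).} Write $G=f(R^{(1)}_1+R^{(1)}_2)-f(R^{(1)}_1)-f(R^{(1)}_2)$, which is a sum of terms such as $|R^{(1)}_1|^2R^{(1)}_2$ and $(R^{(1)}_1)^2\overline{R^{(1)}_2}$. Differentiating in $t$, each derivative lands on some $R_k^{(1)}$ and produces $(\dot z_k\partial_x+i\dot\theta_k)R^{(1)}_k$. Therefore
\[
\|\partial_tG\|_{L^2}\lesssim \sum_k(|\dot z_k|+|\dot\theta_k|)\,\bigl\||\Phi_1|+|\partial_x\Phi_1|\bigr\|\,\bigl\||\Phi_2|+|\partial_x\Phi_2|\bigr\|,
\]
where the last two norms are the $L^2$ norm of the product, controlled by Lemma \ref{solitons}(i) as $e^{-3\sqrt{\omega_\star}|z|}$. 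Using (i) and (ii), $|\dot z_k|+|\dot\theta_k|\lesssim t^{-1}$, and the product factor is $\lesssim t^{-2}$ after the same exponent normalization. Hence $\|\partial_tG\|_{L^2}\lesssim t^{-3}$.

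\emph{Main obstacle.} The delicate point is the bookkeeping of exponential rates. The interaction bounds are stated with $\sqrt{\omega_\star}$ (which is $\le\frac19\sqrt{1-\omega_k^2}$), while $|z|\approx2\log t$. Turning $e^{-c|z|}$ into the claimed powers $t^{-3}$, $t^{-2}\log^{-1}t$ and $t^{-3}$ therefore requires the freedom, noted in the proof of Lemma \ref{solitons}(i), to adjust $\omega_\star$ and the normalization of $z$. I would fix this normalization first, so that $\kappa g_0e^{-|z|}$ is exactly the leading term $t^{-2}$, and check each power against it.
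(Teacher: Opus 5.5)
\textbf{Verdict: genuine gap.} Your plan follows the paper's proof step for step: (i) from the modulation bound on $\dot z_k$ and \eqref{ground}, (ii) from \eqref{estimative2.5}, \eqref{3.5}, \eqref{ground} and the $z$-hypothesis in \eqref{hipotese1}, (iii) by differentiating $G$ and using Lemma \ref{solitons}(i). The difficulty you single out as the main obstacle is a real gap, and the remedy you propose (adjusting $\omega_{\star}$ and the normalization of $z$) is not available.

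\emph{The normalization is already fixed.} \eqref{hipotese1} forces $\kappa g_0e^{-|z|}=t^{-2}(1+o(1))$, i.e. $|z|=2\log t+O(1)$, which is the normalization you say you would choose. With it, \eqref{ground} gives, for large $t$,
$$q_{\omega_k}(|z|)\geq\frac{\kappa}{2}e^{-\sqrt{1-\omega_k^2}\,|z|}\gtrsim t^{-2\sqrt{1-\omega_k^2}}\geq t^{-2}.$$
So your proviso $\sqrt{1-\omega_k^2}\,|z|\geq 3\log t$ can never hold, since $\sqrt{1-\omega_k^2}\le 1$. The bound $\sum_k|\Phi_k(|z|)|\lesssim t^{-3}$, read as $q_{\omega_k}(|z|)\lesssim t^{-3}$ as you and the paper do, is in fact incompatible with \eqref{ground} and \eqref{hipotese1}. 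Your own step (ii) needs $g_0q_{\omega_1}(|z|)=t^{-2}(1+O(\log^{-1}t))$, which flatly contradicts $q_{\omega_1}(|z|)\lesssim t^{-3}$. So on this route no normalization lets (i) and (ii) hold together. Step (ii) also silently replaces the decay rate $\sqrt{1-\omega_1^2}$ of $q_{\omega_1}$ by $1$. The leading term is $g_0\kappa e^{-\sqrt{1-\omega_1^2}\,|z|}\sim t^{-2\sqrt{1-\omega_1^2}}$, which matches $t^{-2}$ only if $\omega_1=0$.

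\emph{The remaining conversions fail for the same reason.} The error terms carry the rate $\sqrt{\omega_{\star}}\leq\frac19$. Under $|z|\approx 2\log t$ they are $e^{-4\sqrt{\omega_{\star}}|z|}\approx t^{-8\sqrt{\omega_{\star}}}$, $e^{-3\sqrt{\omega_{\star}}|z|}\approx t^{-6\sqrt{\omega_{\star}}}$ and $|z|^{-1}e^{-\sqrt{\omega_{\star}}|z|}\approx t^{-2\sqrt{\omega_{\star}}}\log^{-1}t$. All are far larger than $t^{-2}$, and the first is not even $O(t^{-1}\log^{-3/2}t)$.
\begin{itemize}
\item In (i), the interaction term you add to the $\dot z_k$ bound therefore cannot be absorbed. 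The paper simply takes $|\dot z_k|\lesssim\|\vec{\varepsilon}\|_X$ from Lemma \ref{2lemamodulation}, with no such term.
\item Your claims \emph{$O(t^{-2-})$}, \emph{the product factor is $\lesssim t^{-2}$}, and \emph{$g(|z|)=t^{-2}+O(t^{-2}\log^{-1}t)$ since $|z|^{-1}\sim(2\log t)^{-1}$} are unsupported.
\item Enlarging $\omega_{\star}$ does not help, because the interaction bounds cannot beat the true decay rates of the profiles. For instance $\|\Phi_1\Phi_2\|_{L^2}\gtrsim e^{-\min_k\sqrt{1-\omega_k^2}\,|z|}\approx t^{-2\min_k\sqrt{1-\omega_k^2}}$. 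This is not $O(t^{-2})$, since $\omega_1\neq\omega_2$ forces $\min_k\sqrt{1-\omega_k^2}<1$, so your estimate cannot give (iii) either.
\end{itemize}

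The paper's proof makes the same unjustified conversions:
\begin{itemize}
\item $e^{-6\sqrt{\omega_{\star}}|z|}\lesssim t^{-3}$ in (i);
\item $e^{-6\sqrt{\omega_{\star}}|z|}+|z|^{-1}e^{-\sqrt{\omega_{\star}}|z|}\lesssim t^{-2}\log^{-1}t$ in (ii);
\item $e^{-2\sqrt{\omega_{\star}}|z|}\sum_k(|\dot z_k|+|\dot\theta_k|)\lesssim t^{-3}$ in (iii);
\item $\kappa e^{-|z|}$ written in place of $q_{\omega_k}(|z|)$ in \eqref{newtheta}.
\end{itemize}
So following it more closely will not close the gap. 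A correct argument would need hypotheses on $|z|$ matched to the actual decay rates, sharp interaction estimates, and a weaker bound than $t^{-3}$ on $q_{\omega_k}(|z|)$, that is, a modified statement.
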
 

\begin{proof}
 
(i) From Lemma \ref{2lemamodulation}, for large enough $T_{0}$,
$$
\sum_{k=1,2}\left|\dot{z}_{k}\right| \lesssim\|\vec{\varepsilon}\|_{X} \lesssim t^{-1} \log ^{-3 / 2} t.
$$

Now, we consider $q$ such that $\Phi_{k}(x)=q_{\omega_k}(|x|)$. Then, observe that from \eqref{ground} and \eqref{hipotese1}, we have
$$
\left|q_{\omega_k}(|z|)\right| \lesssim|q_{\omega_k}(|z|)-\kappa e^{-\sqrt{1-\omega_{k}^2
}|z|}|+\kappa e^{-\sqrt{1-\omega_{k}^2
}|z|} \lesssim e^{-6\sqrt{\omega_{\star}}|z|}\lesssim t^{-3}.
$$
(ii) For the first estimate note that  from \eqref{estimative2.5}
$$
\begin{aligned}
\left|\dot{\theta}_{k}\mp g_{0} \frac{z}{|z|} q_{\omega_k}(|z|)\right| \lesssim & \left|\dot{\theta}_{k}\mp \frac{z}{|z|} g(|z|)\right|+\left|g(|z|)-g_{0} q_{\omega_k}(|z|)\right| \\
\lesssim & \|\vec{\varepsilon}\|_{X}^{2}+e^{-6\sqrt{\omega_{\star}}|z|}+|z|^{-1}e^{-\sqrt{\omega_{\star}}|z|} \\
\lesssim & \|\vec{\varepsilon}\|_{X}^{2}+t^{-2}+t^{-2} \log ^{-1} t \lesssim t^{-2} \log ^{-1} t.
\end{aligned}
$$
where $k=1,2$. Thus  
\begin{equation}\label{newtheta}
\left|\dot{\theta}_{k}\mp\kappa g_{0} \frac{z}{|z|}e^{-|z|} \right| 
\lesssim \left|\dot{\theta}_{k}\mp g_{0} \frac{z}{|z|} q(|z|)\right|+\left|q(|z|)-\kappa e^{-|z|} \right| \lesssim t^{-2} \log ^{-1} t.   
\end{equation}
Which in turn implies from \eqref{hipotese1}  that $$\sum_{k=1,2}\left||\dot{\theta}_k|-t^{-2}\right| \lesssim t^{-2} \log ^{-1} t.$$

(iii) By direct computation,
$$
\begin{aligned}
\partial_{t}G &=-f^{\prime}\left(R_{1}^{(1)}+R_{2}^{(1)}\right)\left(i\dot{\theta}_1 e^{i \theta_1}R_1^{(1)}+\dot{z}_{1}e^{i \theta_1}\partial_{x}R_{1}^{(1)}+\dot{z}_{2} e^{i \theta_2}  \partial_{x} R_{2}^{(1)}+i\dot{\theta}_2 e^{i \theta_2}R_{2}^{(1)}\right)\\
&\quad +\sum_{k=1,2} f^{\prime}\left(R_{k}^{(1)}\right)\left(\dot{z}_{k} \partial_{x} R_{k}^{(1)}-i\dot{\theta}_k e^{i \theta_k}R_{k}^{(1)}\right). 
\end{aligned}
$$

Thus we get
$$
\left\|\partial_{t} G\right\|_{L^{2}(\R)} \lesssim e^{-2\sqrt{\omega_{\star}}|z|} \sum_{k=1,2}\left(\left|\dot{z}_{k}\right|+ \left|\dot{\theta}_{k}\right|\right) \lesssim t^{-3}.
$$
and hence we obtain the desired inequality.
 
\end{proof}

Now, we will obtain some control estimates for the solitary waves that compose $\Vec{u}$. For this purpose, we will use an argument that involves localizing certain quantities. Therefore, 
			Let $\phi: [0,\infty) \rightarrow [0,\infty)$ be a $C^{\infty}$ cutoff function such that $\phi(s)=0$ for $s>\frac{1}{8}$, $\phi(s) \in [0,1]$ if $s \in [-1,1]$, and $\phi(s)=1$ for $<0s<\frac{1}{10}$. We define  for all $(x,t)\in [0,\infty) \times[0,\infty)$, 
	\[
			\begin{array}{ll}
				 \phi_{k}(x,t):=\phi\left(\frac{1}{\log t}\left|x -z_k(t)\right|\right), \, \,\text { for} \,\,k=1,2.
			\end{array}
			\]
            Note that, 

\begin{equation*}
\left|\partial_{t} \phi_{k}(t, x)\right| \lesssim \frac{\chi_{\Omega_{k}}(t, x)}{t \log t}, \quad \left|\partial_{x} \psi_{k}(t, x)\right| \lesssim \frac{\chi_{\Omega_{k}}(t, x)}{\log t} 
\end{equation*}

where
$$
\Omega_{k}(t, x)=\left\{x \in \mathbb{R}:\left|x-z_{k}(t)\right| \leq \frac{1}{8} \log t\right\}.
$$

 With this setup, we proceed to identify the quantities conserved by the flow. In fact, for $k=1,2,$ we define  
   \begin{equation*}\label{eneloc}
E_k(\Vec{u})=\int_{\R}\left(|u|^{2}+\left|\rho^{2}\right|+\left|u_{x}\right|^{2}+\alpha|u|^{2} v+\frac{\beta}{2}|u|^{4}+\frac{\alpha}{2} v^{2}+\frac{\alpha}{2} n^{2}\right)\phi_k (x) \, d x .\end{equation*}
\begin{equation*}\label{momloc1}
Q_{k}(\Vec{u})=2 \Im \int_{\R} \bar{u} \rho \phi_k(x) \,d x.
\end{equation*}
Finally, we define the operators 
\begin{equation*}\label{definitionSj}
    \mathcal{S}_{k,loc}(\Vec{u})=E_k(\Vec{u})-\omega_kQ_{k}(\Vec{u})
\end{equation*}
and 
\begin{equation*}\label{definitionS}
  \mathcal{S}(\Vec{u})=\sum_{k=1}^2\mathcal{S}_{k,loc}(\Vec{u}).  
\end{equation*}`

\begin{lemma}\label{taylorS}
There exists \(T_{0}\) such that if \(t_{0} > T_{0}\), then for all \(t \in [t_{0}, T^{n}]\),
\begin{equation*}\label{formSfinal}
        \mathcal{S}( \Vec{u}(t), t) = \sum_{j=1}^{2}\mathcal{S}_{j,loc}\left( \Vec{R}_{j}(t), t \right) + \mathcal{H}_{loc}( \vec{\varepsilon}(t), t) + O\left(t^{-3}\right),
\end{equation*}
where $$\begin{aligned}
    \mathcal{H}_{loc}( \vec{\varepsilon}(t), t)&=\int_{\R}|\partial_{x}\varepsilon_1|^2\,dx+\int_{\R}|\varepsilon_1|^2\,dx+\int_{\R}|\varepsilon_2|^2\,dx\\&\quad+\frac{\alpha}{2}\int_{\R}|\varepsilon_3|^2\,dx+\frac{\alpha}{2}\int_{\R}|\varepsilon_4|^2\,dx+\frac{\beta}{2}\int_{\R}|\varepsilon_1|^4\,dx\\
    &\quad +\sum_{j=1}^{2} \alpha \int_{\R}|\varepsilon_1|^2R_{j}^{(3)}\,dx+\sum_{j=1}^{2} 2\alpha \int_{\R} R_{j}^{(1)}\varepsilon_3\overline{\varepsilon}_1\,dx\\&\quad+\sum_{j=1}^{2} \alpha c_j \int_{\R} \varepsilon_3 \varepsilon_4 \Phi_j\,dx +\sum_{j=1}^{2} 2 \omega_j \int_{\R} \varepsilon_2 \overline{\varepsilon}_1\Phi_j\,dx \\
    &\quad +\sum_{j=1}^{2} 2 \beta \int_{\R} \Re(R_{j}^{(1)} \overline{\varepsilon}_1)R_{j}^{(1)} \overline{\varepsilon}_1\,dx.
\end{aligned}$$

\end{lemma}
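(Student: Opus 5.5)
The plan is to expand $\mathcal{S}(\vec u(t),t)=\sum_k \big(E_k(\vec u)-\omega_k Q_k(\vec u)\big)$ around $\vec R_1+\vec R_2$, with $\vec u=\vec R_1+\vec R_2+\vec\varepsilon$ as in \eqref{boostrap1}. The expansion is organised into three groups of terms and uses the bootstrap bounds \eqref{hipotese1}. Throughout, $\|\vec\varepsilon\|_X\lesssim t^{-1}\log^{-3/2}t$ and $|z(t)|=2\log t+O(\log^{-1/2}t)$. Consequently every soliton interaction term is controlled by Lemma \ref{solitons} and Lemma \ref{modu1}(i), which give $e^{-c|z|}\lesssim t^{-3}$ after choosing $\omega_\star$ as allowed there.

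\textbf{Zeroth-order terms.} First expand the integrand of $E_k$ and $Q_k$ at $\vec R_1+\vec R_2$. On the support of $\phi_k$ we have $|x-z_k|\le \frac18\log t$, so $|x-z_j|\ge |z|-\frac18\log t\gtrsim \log t$ for $j\neq k$. Hence $\vec R_j$ and $\partial_x\vec R_j$ are $O(t^{-c})$ there, with a large power by Proposition \ref{decaimentoquadratico}(ii). Conversely, $1-\phi_k$ is supported where $|x-z_k|\ge\frac1{10}\log t$, so the tail of $\vec R_k$ there is also a high power of $t^{-1}$. It follows that $\mathcal{S}_{k,loc}(\vec R_1+\vec R_2)=\mathcal{S}_{k,loc}(\vec R_k)+O(t^{-3})$. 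The cross terms $R_1R_2$, $|R_1|^2R_1R_2$, etc. are handled directly by Lemma \ref{solitons}(i),(ii).

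\textbf{Linear terms.} The term linear in $\vec\varepsilon$ is $\sum_k\langle \mathcal{S}_k'(\vec R_1+\vec R_2)\phi_k,\vec\varepsilon\rangle$ together with the terms in which a derivative falls on $\phi_k$, arising from $|u_x|^2$. Using $\mathcal{S}_k'(\vec\Phi_k)=0$ (Lemma \ref{kernel}, rotated by $T(\theta_k)$, with the phase discrepancy controlled by $|\theta_k|\le t^{-1}$ and Lemma \ref{modu1}(ii)), the localized linear term reduces to:
\begin{enumerate}[(a)]
\item contributions of $\vec R_j$, $j\ne k$, on $\operatorname{supp}\phi_k$;
\item terms involving $\partial_x\phi_k$.
\end{enumerate}
Both carry a factor of $R_k$ or $\partial_xR_k$ evaluated at distance $\gtrsim\log t$, which is a high power of $t^{-1}$. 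Multiplied by $\|\vec\varepsilon\|_X$, this gives $O(t^{-3})$.

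\textbf{Quadratic and higher terms.} The quadratic part, summed over $k$ with $\phi_1+\phi_2$, is written out explicitly. It yields the kinetic and mass terms in $\varepsilon_1,\varepsilon_2,\varepsilon_3,\varepsilon_4$, the mixed terms $\alpha|\varepsilon_1|^2R_j^{(3)}$ and $2\alpha R_j^{(1)}\varepsilon_3\bar\varepsilon_1$, the $\omega_j$ term from $Q_k$, and $2\beta\Re(R_j^{(1)}\bar\varepsilon_1)R_j^{(1)}\bar\varepsilon_1$. In each of these the soliton-weighted parts may replace $\phi_k$ by $1$ at cost $O(t^{-3})$. The remaining mismatch for the soliton-free quadratic terms between $\phi_1+\phi_2$ and the full integrals displayed in $\mathcal{H}_{loc}$ is where I expect the main obstacle. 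The region between the two cutoffs is not covered by either $\phi_k$, so one must argue that this defect is $O(\|\vec\varepsilon\|_X^2)=O(t^{-2}\log^{-3}t)$. That is not $O(t^{-3})$, so it must either be absorbed into the definition of $\mathcal{H}_{loc}$, read with the localizing weights, or the $O(t^{-3})$ must be interpreted as $O(t^{-3}+\|\vec\varepsilon\|^2_{X,\text{far}})$. I would first try to treat $\mathcal{H}_{loc}$ as including the weights $\phi_1+\phi_2$ implicitly, as its name suggests. The cubic and quartic remainders in $\vec\varepsilon$ are bounded by $\|\vec\varepsilon\|_X^3\lesssim t^{-3}$ via the Sobolev embedding $H^1\subset L^\infty$, except for the retained term $\frac\beta2\int|\varepsilon_1|^4$.
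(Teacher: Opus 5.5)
You have a genuine gap: your argument does not establish the stated identity, and your last paragraph concedes this. With both $E_k$ and $Q_k$ localized by $\phi_k$, the soliton-free quadratic part is $\int_{\R}(\phi_1+\phi_2)\big(|\partial_x\varepsilon_1|^2+|\varepsilon_1|^2+|\varepsilon_2|^2+\tfrac{\alpha}{2}|\varepsilon_3|^2+\tfrac{\alpha}{2}|\varepsilon_4|^2\big)\,dx$. This differs from the unweighted integrals in $\mathcal{H}_{loc}$ by an amount of order $\|\vec\varepsilon\|_X^2$, not $O(t^{-3})$.

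The idea you are missing is the one the paper's proof starts from: it never localizes the energy. Its first line rewrites $\mathcal{S}(\vec u)=E(\vec u)-\sum_j\omega_jQ_j(\vec u)$, i.e. it uses $\phi_1+\phi_2\equiv1$, which it states explicitly in the proof of Proposition \ref{coercivity2}. It then expands the global conserved energy around $\vec R_1+\vec R_2$, removes the linear terms with the soliton equations, and only at the end replaces $E(\vec R_j)$ by $E_j(\vec R_j)$. This is why in $\mathcal{H}_{loc}$ only the $\omega_j$ mass term carries a cutoff (written $\Phi_j$) while every energy-type integral is unweighted. The same reduction is what lets Lemma \ref{VariacionS} invoke conservation of $E$ and differentiate only the localized masses.

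You are right that the compactly supported $\phi$ written in the paper is not a partition of unity, so this route requires choosing $\phi_1,\phi_2$ with $\phi_1+\phi_2\equiv1$. Your fallback, reading $\mathcal{H}_{loc}$ with the weight $\phi_1+\phi_2$, is not a harmless reinterpretation. That form vanishes on any $\vec\varepsilon$ supported away from the two windows $|x-z_k|\le\frac18\log t$. So the coercivity bound of Proposition \ref{coercivity2} would be lost, and with it the bootstrap on $\|\vec\varepsilon\|_X$ in Lemma \ref{Bootstrap3}.

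The error bounds in your zeroth- and first-order steps also fail as stated.
The supports of $\partial_x\phi_k$ and of $1-\phi_k$ begin at $|x-z_k|=\frac1{10}\log t$. There $|R_k^{(1)}|+|\partial_xR_k^{(1)}|\approx t^{-\sqrt{1-\omega_k^2}/10}$, which is a small fixed power, not a high power of $t^{-1}$. Hence the commutator terms produced by $|u_x|^2\phi_k$ are only of size $t^{-1-\sqrt{1-\omega_k^2}/10}$ up to logarithms. Such terms do not arise in the paper's route, where the only localized quantity is the derivative-free mass. Incidentally, your linear step needs no phase-discrepancy argument: $\mathcal{S}_k$ is phase invariant and $\mathcal{S}_k'(\vec R_k)=0$ pointwise, so after integrating by parts only the $\partial_x\phi_k$ terms survive.

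Likewise, the blanket bound $e^{-c|z|}\lesssim t^{-3}$ ``after choosing $\omega_\star$'' is not available. $\omega_\star\le\frac1{81}$ is fixed by $\omega_1,\omega_2$, and $|z|\approx2\log t$ only gives $e^{-6\sqrt{\omega_\star}|z|}\approx t^{-12\sqrt{\omega_\star}}\ge t^{-4/3}$. The paper leans on the same claim through Lemmas \ref{solitons} and \ref{modu1}(i), so you are not departing from it here, but it cannot be quoted as established.

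Finally, an honest expansion of $\frac{\beta}{2}|u|^4$ also produces $\beta\int_{\R}|R_j^{(1)}|^2|\varepsilon_1|^2$, and nothing in $E$ or $Q_2$ produces an $\varepsilon_3\varepsilon_4$ term. So the quadratic part does not reproduce the displayed $\mathcal{H}_{loc}$ term by term as you assert; the paper's own derivation has the same mismatch.
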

\begin{proof} 
Let $t \in [t_{0},T^{n}]$ and fix $j \in\{1,2\}$. Observe that, from the definition \eqref{definitionSj} and \eqref{definitionS}, we have 
\begin{equation}\label{primerestima}
\begin{aligned}
\mathcal{S}( \Vec{u}) & = E(\vec{u})-\sum_{j=1}^{2} \omega_j Q_{j}(\vec{u}).
\end{aligned}
\end{equation}
Initially, we will estimate each term that composes $\mathcal{S}$ by taking $\vec{u}=\vec{R}+\vec{\varepsilon}=\sum_{j=1}^{2}\vec{R}_j+\vec{\varepsilon}.$

In fact, \begin{equation*}
    \begin{aligned}        E(\vec{u})&=E\left(\sum_{j=1}^{2}\vec{R}_j +\vec{\varepsilon}\right)\\&=\int_\rr\left|\sum_{s=1}^{2} \partial_{x} R_{s}^{(1)}+\partial_{x}\varepsilon_1\right|^{2} \,dx + \int_\rr\left|\sum_{k=1}^{2}  R_{k}^{(2)}+\varepsilon_2\right|^{2} \,dx+ \frac{\alpha}{2}\int_\rr\left|\sum_{k=1}^{2}  R_{k}^{(4)}+\varepsilon_4\right|^{2} \,dx \\
    & \quad + \frac{\beta}{2} \int_\rr\left|\sum_{k=1}^{2}  R_{k}^{(1)}+\varepsilon_1\right|^{4} \,dx + \alpha\Re \int_\rr\left(\sum_{k=1}^{2} R_{k}^{(3)}+\varepsilon_3\right)\left|\sum_{s=1}^{2} \bar{R}_{s}^{(1)}+\varepsilon_1\right|^{2} \,dx\\
    &\quad + \frac{\alpha}{2}\int_\rr\left|\sum_{k=1}^{2}  R_{k}^{(3)}+\varepsilon_3\right|^{2} \,dx+ \frac{\alpha}{2}\int_\rr\left|\sum_{k=1}^{2}  R_{k}^{(1)}+\varepsilon_1\right|^{2} \,dx.
    \end{aligned}
\end{equation*}

To estimate the terms of $E$, we will use Lemma \ref{solitons} part $3$, as follows: $$\begin{aligned}
    \int_\rr\left|\sum_{s=1}^{2} \partial_{x} R_{s}^{(1)}+\varepsilon_1\right|^{2} \,dx&=\int_\rr\left|\sum_{s=1}^{2} \partial_{x} R_{s}^{(1)}\right|^{2} \,dx+2\Re\int_{\R}\left(\sum_{s=1}^{2} \partial_{x} R_{s}^{(1)}\right)\partial_x \overline{\varepsilon}_1 \,dx+\int_{\R}\left|\partial_x\varepsilon_1\right|^{2} \,dx. 
\end{aligned}$$
We will expand the terms of the above integrals, so
$$
\begin{aligned}
 \left|\sum_{s=1}^{2} \partial_{x} R_{s}^{(1)}\right|^{2}&=\left(\partial_{x} R_{1}^{(1)}+\partial_{x} R_{2}^{(1)}\right)\left(\partial_{x} \bar{R}_{1}^{(1)}+\partial_{x} \bar{R}_{2}^{(1)}\right),
\end{aligned}
$$
whence,
\begin{equation*}
\left|\sum_{s=1}^{2} \partial_{x} R_{s}^{(1)}\right|^{2}=\sum_{s=1}^{2}\left|\partial_{x} R_{s}^{(1)}\right|^{2}+\sum_{\substack{s, m=1 \\ s \neq m}}^{2} \partial_{x} R_{s}^{(1)} \partial_{x} \bar{R}_{j}^{(1)}.
\end{equation*} 
Therefore, from Lemma \ref{solitons},$$
\begin{aligned}
\int_\rr\left|\sum_{s=1}^{2} \partial_{x} R_{s}^{(1)}+\varepsilon_1\right|^{2} \,dx
    &=\int_\rr\left|\sum_{s=1}^{2} \partial_{x} R_{s}^{(1)}\right|^{2} \,dx+2\Re\int_{\R}\left(\sum_{s=1}^{2} \partial_{x} R_{s}^{(1)}\right)\partial_x \overline{\varepsilon}_1 \,dx+\int_{\R}\left|\partial_x\varepsilon_1\right|^{2} \,dx\\
    &=\sum_{j=1}^{2} \int_{\R}\left|\partial_{x} R_{j}^{(1)}\right|^{2} \,dx-\sum_{j=1}^{2}\Re\int_{\R}  2\partial_{xx}^{2} R_{j}^{(1)} \overline{\varepsilon}_1 \,dx+\int_{\R}\left|\partial_{x}\varepsilon_1\right|^{2} \,dx\\
    &\quad+O\left(t^{-3}\right).
    \end{aligned}$$
Similarly to the previous computations, we have, $$\begin{aligned}
    \int_\rr\left|\sum_{s=1}^{2} R_{s}^{(2)}+\varepsilon_2\right|^{2} \,dx&=\int_\rr\left|\sum_{s=1}^{2}  R_{s}^{(2)}\right|^{2} \,dx+2\Re\int_{\R}\left(\sum_{s=1}^{2} R_{s}^{(2)}\right) \overline{\varepsilon}_2 \,dx+\int_{\R}\left|\varepsilon_2\right|^{2} \,dx \\
    &=\sum_{j=1}^{2} \int_{\R}\left| R_{j}^{(2)}\right|^{2} \,dx+\sum_{j=1}^{2}\Re\int_{\R}  2 R_{j}^{(2)} \overline{\varepsilon}_2 \,dx+\int_{\R}\left|\varepsilon_2\right|^{2} \,dx+O\left(t^{-3}\right),
\end{aligned}$$
$$\begin{aligned}
    \int_\rr\left|\sum_{s=1}^{2} R_{s}^{(1)}+\varepsilon_1\right|^{2} \,dx&=\int_\rr\left|\sum_{s=1}^{2}  R_{s}^{(1)}\right|^{2} \,dx+2\Re\int_{\R}\left(\sum_{s=1}^{2} R_{s}^{(1)}\right) \overline{\varepsilon}_1 \,dx+\int_{\R}\left|\varepsilon_1\right|^{2} \,dx \\
    &=\sum_{j=1}^{2} \int_{\R}\left| R_{j}^{(1)}\right|^{2} \,dx+\sum_{j=1}^{2}\Re\int_{\R}  2 R_{j}^{(1)} \overline{\varepsilon}_1 \,dx+\int_{\R}\left|\varepsilon_1\right|^{2} \,dx+O\left(t^{-3}\right), 
\end{aligned}$$
$$\begin{aligned}
    \frac{\alpha}{2}\int_\rr\left(\sum_{s=1}^{2} R_{S}^{(3)}+\varepsilon_3\right)^{2} \,dx&=\frac{\alpha}{2}\int_\rr\left(\sum_{s=1}^{2}  R_{S}^{(3)}\right)^{2} \,dx+2\frac{\alpha}{2}\Re\int_{\R}\left(\sum_{s=1}^{2} R_{S}^{(3)}\right) \overline{\varepsilon}_3 \,dx+\frac{\alpha}{2}\int_{\R}\left(\varepsilon_3\right)^{2} \,dx \\
    &=\frac{\alpha}{2}\sum_{j=1}^{2} \int_{\R}\left( R_{j}^{(3)}\right)^{2} \,dx+\sum_{j=1}^{2}\Re\int_{\R}  \alpha R_{j}^{(3)} \overline{\varepsilon}_3 \,dx+\frac{\alpha}{2}\int_{\R}\left(\varepsilon_1\right)^{2} \,dx\\
&\quad+O\left(t^{-3}\right)
\end{aligned}$$
and $$\begin{aligned}
    &\frac{\alpha}{2}\int_\rr\left(\sum_{s=1}^{2} R_{S}^{(4)}+\varepsilon_4\right)^{2} \,dx\\&=\frac{\alpha}{2}\int_\rr\left(\sum_{s=1}^{2}  R_{S}^{(4)}\right)^{2} \,dx+2\frac{\alpha}{2}\Re\int_{\R}\left(\sum_{s=1}^{2} R_{S}^{(4)}\right) \overline{\varepsilon}_4 \,dx+\frac{\alpha}{2}\int_{\R}\left(\varepsilon_4\right)^{2} \,dx \\
    &=\frac{\alpha}{2}\sum_{j=1}^{2} \int_{\R}\left( R_{j}^{(4)}\right)^{2} \,dx+\sum_{j=1}^{2}\Re\int_{\R}  \alpha R_{j}^{(4)} \overline{\varepsilon}_4\,dx+\frac{\alpha}{2}\int_{\R}\left(\varepsilon_4\right)^{2} \,dx+O\left(t^{-3}\right).
\end{aligned}$$
Now, we have 
$$\begin{aligned}
    &\alpha\int_\rr\left(\sum_{s=1}^{2} R_{S}^{(3)}+\varepsilon_3\right)\left|\sum_{s=1}^{2} R_{s}^{(1)}+\varepsilon_1\right|^2\,dx\\&=\alpha\int_\rr\left(\sum_{s=1}^{2}  R_{S}^{(3)}\right) \left|\sum_{s=1}^{2} R_{s}^{(1)}\right|^2\,dx+\alpha\int_\rr\left(\sum_{s=1}^{2}  R_{S}^{(3)}\right) \left|\varepsilon_1\right|^2\,dx\\&\quad+\alpha\int_\rr\left|\sum_{s=1}^{2}  R_{s}^{(1)}\right|^2 \varepsilon_3\,dx+\alpha\int_\rr\varepsilon_3 |\varepsilon_1|^2\,dx\\
    &\quad+\alpha \int_{\R}\left( \sum_{j=1}^{2}R_{j}^{(3)}\right) 2\Re \left(\sum_{k=1}^{2}R_{k}^{(1)} \overline{\varepsilon}_1\right)\,dx+\alpha \int_{\R} 2\Re \left(\sum_{k=1}^{2}R_{k}^{(1)} \overline{\varepsilon}_1\right)\varepsilon_3\,dx.
\end{aligned}$$
Furthermore, note that 
\begin{equation}\label{somadouble}
\begin{aligned}
&\left(\sum_{k=1}^{2} R_{k}^{(3)}\right)\left|\sum_{s=1}^{2} \bar{R}_{S}^{(1)}\right|^{2}\\ &=R_{j}^{(3)}\left(\bar{R}_{j}^{(1)}\right)^{2}+2\sum_{\substack{s=1\\ s\neq j}}^{N} \bar{R}_{S}^{(1)}\bar{R}_{j}^{(1)}R_{j}^{(3)}+ \sum_{\substack{k=1\\k\neq j}}^{N} R_{k}^{(3)}\,(\bar{R}_{j}^{(1)})^{2} + \left(\sum_{\substack{s=1\\s\neq j}}^{N} \bar{R}_{S}^{(1)}\right)^{2}R_{j}^{(3)}\\
&\quad +2\left(\sum_{\substack{k=1\\k\neq j}}^{N} R_{k}^{(3)}\right)\left(\sum_{\substack{s=1\\s\neq j}}^{N} \bar{R}_{S}^{(1)}\right)\bar{R}_{j}^{(1)}+\left(\sum_{\substack{k=1\\k\neq j}}^{N} R_{k}^{(3)}\right)\left(\sum_{\substack{s=1\\s\neq j}}^{N} \bar{R}_{S}^{(1)}\right)^{2}.
\end{aligned}
\end{equation}
Thus, using Lemma \ref{solitons} along with the above estimates, we obtain
$$\begin{aligned}
    &\alpha\int_\rr\left(\sum_{s=1}^{2} R_{S}^{(3)}+\varepsilon_3\right)\left|\sum_{s=1}^{2} R_{s}^{(1)}+\varepsilon_1\right|^2\,dx\\&=\sum_{j=1}^{2}\int_\rr\alpha\left(  R_{j}^{(3)}\right) \left| R_{j}^{(1)}\right|^2\,dx+\sum_{j=1}^{2}\int_\rr\alpha\left(  R_{j}^{(3)}\right) \left|\varepsilon_1\right|^2\,dx+\sum_{j=1}^{2}\int_\rr \alpha\left| R_{j}^{(1)}\right|^2 \varepsilon_3\,dx\\&\quad+\alpha\int_\rr\varepsilon_3 |\varepsilon_1|^2\,dx+ \sum_{j=1}^{2}\int_{\R}\left( R_{j}^{(3)}\right) 2\alpha\Re \left(R_{j}^{(1)} \overline{\varepsilon}_1\right)\,dx+\sum_{j=1}^{2}\int_{\R} 2\alpha \Re \left(R_{j}^{(1)} \overline{\varepsilon}_1\right)\varepsilon_3\,dx
\\&\quad +O\left(t^{-3}\right).
\end{aligned}$$
Finally, it only remains to analyze $$\begin{aligned}
   & \frac{\beta}{2}\int_\rr\left|\sum_{s=1}^{2} R_{s}^{(1)}+\varepsilon_1\right|^4\,dx=\frac{\beta}{2}\int_\rr\left(\left|\sum_{s=1}^{2} R_{s}^{(1)}\right|^2+2\Re\left(\sum_{s=1}^{2} R_{s}^{(1)} \overline{\varepsilon}_1\right)+|\varepsilon_1|^2\right)^2\,dx\\
    &=\frac{\beta}{2}\int_\rr\left(\left|\sum_{s=1}^{2} R_{s}^{(1)}\right|^2+2\Re\left(\sum_{s=1}^{2} R_{s}^{(1)} \overline{\varepsilon}_1\right)\right)^2\,dx+\beta\int_\rr\left(\left|\sum_{s=1}^{2} R_{s}^{(1)}\right|^2+2\Re\left(\sum_{s=1}^{2} R_{s}^{(1)} \overline{\varepsilon}_1\right)\right)|\varepsilon_1|^2\,dx\\
    &\quad+\frac{\beta}{2}\int_\rr|\varepsilon_1|^4\,dx\\
    &=\frac{\beta}{2}\int_\rr\left|\sum_{s=1}^{2} R_{s}^{(1)}\right|^4\,dx+2\beta\int_\rr\left|\sum_{s=1}^{2} R_{s}^{(1)}\right|^2\Re\left(\sum_{s=1}^{2} R_{s}^{(1)} \overline{\varepsilon}_1 \right)\,dx+2\beta\int_\rr\Re\left(\sum_{s=1}^{2} R_{s}^{(1)} \overline{\varepsilon}_1 \right)^2\,dx\\
    &\quad+\beta\int_\rr\left|\sum_{s=1}^{2} R_{s}^{(1)}\right|^2|\varepsilon_1|^2\,dx +2\beta\int_\rr\Re\left(\sum_{s=1}^{2} R_{s}^{(1)} \overline{\varepsilon}_1 \right)|\varepsilon_1|^2\,dx+\frac{\beta}{2}\int_\rr|\varepsilon_1|^4\,dx.
\end{aligned}$$
Following the same steps as in the previous estimates and applying Lemma \ref{solitons}, we obtain
$$\begin{aligned}
   & \frac{\beta}{2}\int_\rr\left|\sum_{s=1}^{2} R_{s}^{(1)}+\varepsilon_1\right|^4\,dx\\&=\sum_{j=1}^{2}\int_\rr\frac{\beta}{2}\left| R_{j}^{(1)}\right|^4\,dx+\sum_{s=1}^{2}\int_\rr2\beta\Re\left( R_{j}^{(1)} \overline{\varepsilon}_1 \right)^2\,dx+\frac{\beta}{2}\int_\rr|\varepsilon_1|^4\,dx+O\left(t^{-3}\right).
\end{aligned}$$
Combining the previous estimates, we have
\begin{equation*}
    \begin{aligned}        &E(\vec{u})\\&=\sum_{j=1}^{2} \int_{\R}\left|\partial_{x} R_{j}^{(1)}\right|^{2} \,dx-\sum_{j=1}^{2}\Re\int_{\R}  2\partial_{xx}^{2} R_{j}^{(1)} \overline{\varepsilon}_1 \,dx+\int_{\R}\left|\partial_{x}\varepsilon_1\right|^{2} \,dx+\sum_{j=1}^{2} \int_{\R}\left| R_{j}^{(1)}\right|^{2} \,dx +\sum_{j=1}^{2}\Re\int_{\R}  2 R_{j}^{(1)} \overline{\varepsilon}_1 \,dx\\&\quad+\int_{\R}\left|\varepsilon_1\right|^{2} \,dx  + \sum_{j=1}^{2} \int_{\R}\left| R_{j}^{(2)}\right|^{2} \,dx+\sum_{j=1}^{2}\Re\int_{\R}  2 R_{j}^{(2)} \overline{\varepsilon}_2 \,dx+\int_{\R}\left|\varepsilon_2\right|^{2} \,dx +\frac{\alpha}{2}\sum_{j=1}^{2} \int_{\R}\left( R_{j}^{(3)}\right)^{2} \,dx\\
    &\quad+\sum_{j=1}^{2}\Re\int_{\R}  \alpha R_{j}^{(3)} \overline{\varepsilon}_3 \,dx+\frac{\alpha}{2}\int_{\R}\left(\varepsilon_1\right)^{2} \,dx +\frac{\alpha}{2}\sum_{j=1}^{2} \int_{\R}\left( R_{j}^{(4)}\right)^{2} \,dx+\sum_{j=1}^{2}\Re\int_{\R}  \alpha R_{j}^{(4)} \overline{\varepsilon}_4 \,dx+\frac{\alpha}{2}\int_{\R}\left(\varepsilon_4\right)^{2} \,dx\\
&\quad+\sum_{j=1}^{2}\int_\rr\alpha\left(  R_{j}^{(3)}\right) \left| R_{j}^{(1)}\right|^2\,dx+\sum_{j=1}^{2}\int_\rr\alpha\left(  R_{j}^{(3)}\right) \left|\varepsilon_1\right|^2\,dx+\sum_{j=1}^{2}\int_\rr \alpha\left| R_{j}^{(1)}\right|^2 \varepsilon_3\,dx\\&\quad+ \sum_{j=1}^{2}\int_{\R}\left( R_{j}^{(3)}\right) 2\alpha\Re \left(R_{j}^{(1)} \overline{\varepsilon}_1\right)\,dx+\sum_{j=1}^{2}\int_{\R} 2\alpha \Re \left(R_{j}^{(1)} \overline{\varepsilon}_1\right)\varepsilon_3\,dx+\sum_{s=1}^{2}\int_\rr2\beta\Re\left( R_{j}^{(1)} \overline{\varepsilon}_1 \right)^2\,dx\\&\quad+\frac{\beta}{2}\int_\rr|\varepsilon_1|^4\,dx+\sum_{j=1}^{2}\int_\rr\frac{\beta}{2}\left| R_{j}^{(1)}\right|^4\,dx+O\left(t^{-3}\right).
    \end{aligned}
\end{equation*}
That is, \begin{equation*}\label{1accou}
    \begin{aligned}        E(\vec{u})&=\sum_{j=1}^{2}E(\vec{R}_j) -\sum_{j=1}^{2}\Re\int_{\R}  2\partial_{xx}^{2} R_{j}^{(1)} \overline{\varepsilon}_1 \,dx+\int_{\R}\left|\partial_{x}\varepsilon_1\right|^{2} \,dx +\sum_{j=1}^{2}\Re\int_{\R}  2 R_{j}^{(1)} \overline{\varepsilon}_1 \,dx\\&\quad+\int_{\R}\left|\varepsilon_1\right|^{2} \,dx  +\sum_{j=1}^{2}\Re\int_{\R}  2 R_{j}^{(2)} \overline{\varepsilon}_2 \,dx+\int_{\R}\left|\varepsilon_2\right|^{2} \,dx+\sum_{j=1}^{2}\Re\int_{\R}  \alpha R_{j}^{(3)} \overline{\varepsilon}_3 \,dx\\
    &\quad+\frac{\alpha}{2}\int_{\R}\left(\varepsilon_1\right)^{2} \,dx+\sum_{j=1}^{2}\Re\int_{\R}  \alpha R_{j}^{(4)} \overline{\varepsilon}_4 \,dx+\frac{\alpha}{2}\int_{\R}\left(\varepsilon_4\right)^{2} \,dx+\sum_{j=1}^{2}\int_\rr\alpha\left(  R_{j}^{(3)}\right) \left|\varepsilon_1\right|^2\,dx\\
    &\quad+\sum_{j=1}^{2}\int_\rr \alpha\left| R_{j}^{(1)}\right|^2 \varepsilon_3\,dx+ \sum_{j=1}^{2}\int_{\R}\left( R_{j}^{(3)}\right) 2\alpha\Re \left(R_{j}^{(1)} \overline{\varepsilon}_1\right)\,dx+\sum_{j=1}^{2}\int_{\R} 2\alpha \Re \left(R_{j}^{(1)} \overline{\varepsilon}_1\right)\varepsilon_3\,dx\\&\quad+\sum_{s=1}^{2}\int_\rr2\beta\Re\left( R_{j}^{(1)} \overline{\varepsilon}_1 \right)^2\,dx+\frac{\beta}{2}\int_\rr|\varepsilon_1|^4\,dx+O\left(t^{-3}\right).
    \end{aligned}
\end{equation*}
Finally, by performing a process very similar to the previous one, we can obtain that 
\begin{equation}\label{accou3}
    \begin{aligned}
    -\sum_{j=1}^{2} \omega_j Q_{j}(\vec{u})
    &= -\sum_{j=1}^{2} \omega_j Q_{j}(\vec{R}_j)-\sum_{j=1}^{2} \omega_j \bigg[ \Im\int_{\R} 2   \overline{R_{j}^{(1)}}\varepsilon_2R_{j}\,dx+\Im \int_{\R} 2 R_{j}^{(2)}\overline{\varepsilon}_1R_{j}\,dx+\int_{\R}2\overline{\varepsilon}_1\varepsilon_2R_{j}\,dx\\
    &\quad \quad \quad \quad \quad \quad + O\left(t^{-3}\right)\bigg].
\end{aligned}
\end{equation}
Using the previous estimates, \eqref{1accou}-\eqref{accou3}, along with the fact that $\vec{R}_j$ satisfies \eqref{pointcrit} for each $j$, it follows that for all $j \in \{1,2\},$
$$
\mathcal{S}( \Vec{u}(t), t) = \sum_{j=1}^{2}\left(E(\vec{R}_j)-\omega_jQ_{j}(\vec{R}_j)\right) + \mathcal{H}_{loc}( \vec{\varepsilon}(t), t) + O\left(t^{-3}\right),
$$
where $$\begin{aligned}
    \mathcal{H}_{loc}( \vec{\varepsilon}(t), t)&=\int_{\R}|\partial_{x}\varepsilon_1|^2\,dx+\int_{\R}|\varepsilon_1|^2\,dx+\int_{\R}|\varepsilon_2|^2\,dx\\&\quad+\frac{\alpha}{2}\int_{\R}|\varepsilon_3|^2\,dx+\frac{\alpha}{2}\int_{\R}|\varepsilon_4|^2\,dx+\frac{\beta}{2}\int_{\R}|\varepsilon_1|^4\,dx\\
    &\quad +\sum_{j=1}^{2} \alpha \int_{\R}|\varepsilon_1|^2R_{j}^{(3)}\,dx+\sum_{j=1}^{2} 2\alpha \int_{\R} R_{j}^{(1)}\varepsilon_3\overline{\varepsilon}_1\,dx\\&\quad+\sum_{j=1}^{2} 2 \omega_j \int_{\R} \varepsilon_2 \overline{\varepsilon}_1\Phi_j\,dx +\sum_{j=1}^{2} 2 \beta \int_{\R} \Re(R_{j}^{(1)} \overline{\varepsilon}_1)R_{j}^{(1)} \overline{\varepsilon}_1\,dx.
\end{aligned}$$
Finally, note that $$E_j(\vec{R}_j)=E(\vec{R}_j)+ O\left(t^{-3}\right),$$
which is due to the definition of $\Phi_j$ for each $j$. For example, we can take a component of the sums in $E_j$ and note that: $$\int_\rr\left|\sum_{s=1}^{2} \partial_{x} R_{s}^{(1)}\right|^{2} R_{j}\,dx=\int_\rr\left|\partial_{x} R_{j}^{(1)}\right|^{2} \,dx-\int_\rr\left|\partial_{x} R_{j}^{(1)}\right|^{2}\sum_{\substack{k=1 \\ k \neq j}}^{N}R_{k} \,dx,$$
Thus, from Lemma \ref{solitons}, the assertion follows.
\end{proof}

We now propose a new way to write the previous lemma.
\begin{lemma}\label{newS}
    There exists \(T_{0}\) such that if \(t_{0} > T_{0}\), then for all \(t \in [t_{0}, T^{n}]\),
$$
\mathcal{S}( \Vec{u}(t), t) = \mathcal{S}\left( \Vec{R}(t), t \right) + \mathcal{H}_{loc}( \vec{\varepsilon}(t), t) +O\left(t^{-3}\right).
$$
\end{lemma}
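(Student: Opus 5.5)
Lemma~\ref{newS} should follow directly from Lemma~\ref{taylorS}. The only thing to show is that the sum of the localized actions of the individual solitons, $\sum_{j=1}^{2}\mathcal{S}_{j,loc}(\vec{R}_j(t),t)$, coincides with $\mathcal{S}(\vec{R}(t),t)$ up to $O(t^{-3})$, where $\vec{R}=\vec{R}_1+\vec{R}_2$.

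By definition,
\[
\mathcal{S}(\vec{R},t)=\sum_{k=1}^{2}\Big(E_k(\vec{R}_1+\vec{R}_2)-\omega_k Q_k(\vec{R}_1+\vec{R}_2)\Big).
\]
I will expand each density appearing in $E_k$ and $Q_k$. That is, I expand $|\partial_x(R_1^{(1)}+R_2^{(1)})|^2$, $|R_1^{(2)}+R_2^{(2)}|^2$, the quartic term, the cross term $\alpha|u|^2v$, the two quadratic terms in $v$ and $n$, and $\Im(\bar u\rho)$. Each expansion produces a diagonal part, built only from $\vec{R}_1$ or only from $\vec{R}_2$, plus mixed products. Every mixed product contains at least one factor of $R_1^{(i)}$ (or its derivative) multiplied by a factor of $R_2^{(l)}$. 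Since $0\le\phi_k\le1$ and the solitons are bounded, Lemma~\ref{solitons}(i) together with Cauchy–Schwarz bounds every such mixed integral by $Ce^{-3\sqrt{\omega_\star}|z|}$. The bootstrap assumption \eqref{hipotese1} gives $|z(t)|=2\log t+O(\log^{-1/2}t)$, and after shrinking $\omega_\star$ as allowed in the proof of Lemma~\ref{solitons}, this is $O(t^{-3})$. The same reasoning is used in Lemma~\ref{modu1}(i).

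This leaves $\sum_k\big(E_k(\vec{R}_1)+E_k(\vec{R}_2)-\omega_kQ_k(\vec{R}_1)-\omega_kQ_k(\vec{R}_2)\big)+O(t^{-3})$. The off-diagonal pieces are $E_k(\vec{R}_j)$ and $Q_k(\vec{R}_j)$ with $j\neq k$. In these, the density of $\vec{R}_j$ is integrated against $\phi_k$, which is supported where $|x-z_k|\le\frac18\log t$. On that set $|x-z_j|\ge|z|-\frac18\log t\ge\frac{15}{8}\log t-o(1)$. By Proposition~\ref{decaimentoquadratico}(ii), the densities are quadratic in $\Phi_j$ and $\partial_x\Phi_j$ and therefore decay like $e^{-2\epsilon\sqrt{1-\omega_j^2}|x-z_j|}$. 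The off-diagonal pieces are thus bounded by a negative power of $t$, which I expect to be $t^{-3}$ once the constants are tracked. The remaining diagonal terms are exactly $\sum_j\mathcal{S}_{j,loc}(\vec{R}_j,t)$.

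Combining this identity with Lemma~\ref{taylorS} gives the claim. The expected obstacle is bookkeeping of the decay rates. The tail estimate for $\vec{R}_j$ outside its own cutoff region only gives a power $t^{-c}$, where $c$ depends on $\omega_j$ and on the support radius $\frac18\log t$ of $\phi$. To obtain $c\ge3$, I would first try to use that the decay rate in Proposition~\ref{decaimentoquadratico}(ii) can be taken arbitrarily close to $\sqrt{1-\omega_j^2}$. If this is not enough, I would absorb the discrepancy exactly as the $O(t^{-3})$ terms were absorbed in Lemma~\ref{taylorS}, namely by adjusting $\omega_\star$ and $T_0$.
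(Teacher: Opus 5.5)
Your reduction is the same as the paper's. Via Lemma~\ref{taylorS} you reduce to $\mathcal{S}(\vec R)=\sum_j\mathcal{S}_{j,loc}(\vec R_j)+O(t^{-3})$, then bound the mixed and off-diagonal pieces in absolute value using Lemma~\ref{solitons}(i). The gap is exactly the claim that these pieces are $O(t^{-3})$.

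Write $\mu_j=\sqrt{1-\omega_j^2}\le 1$, so $\sqrt{\omega_\star}=\mu_{\min}/9$. Since \eqref{hipotese1} gives $|z(t)|=2\log t+O(1)$, your bound $e^{-3\sqrt{\omega_\star}|z|}$ is about $t^{-2\mu_{\min}/3}$, nowhere near $t^{-3}$. Shrinking $\omega_\star$ makes this bound \emph{larger}, and enlarging $T_0$ cannot improve a power of $t$.

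Sharper decay rates cannot rescue the step either. Mixed integrals such as $\int\partial_x\Phi_1\,\partial_x\Phi_2$ or $\int\Phi_1^3\Phi_2$ are genuinely of size $e^{-\mu|z|}\approx t^{-2\mu}$, up to polynomial factors, with $\mu$ one of the $\mu_j$; this is what Lemma~\ref{solitons}(ii)--(iii) computes. That quantity is the very interaction driving $\ddot z\approx -2e^{-z}$ and the $2\log t$ law, so it is never better than $t^{-2}$.

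Your off-diagonal tails $\int\phi_k|\partial_x R_j|^2$ are likewise of order $t^{-15\mu_j/4}$ up to logarithms. That is $O(t^{-3})$ only when $|\omega_j|\le 3/5$. The paper's written proof makes the same unjustified move (``take $T_0$ large so that $e^{-6\sqrt{\omega_\star}|z|}\le t^{-3}$''), as does Lemma~\ref{modu1}(i), which you cite. So following it does not close the gap.

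What this route really needs is cancellation rather than absolute bounds, and even that is not enough. Because $\mathcal{S}_k'(\vec R_k)=0$ holds pointwise, the part of $\mathcal{S}_{k,loc}(\vec R_k+\vec R_j)-\mathcal{S}_{k,loc}(\vec R_k)$ linear in $\vec R_j$ collapses, after integrating the $|\partial_x u|^2$ term by parts, to $-2\Re\int\partial_x\phi_k\,\partial_xR_k^{(1)}\overline{R_j^{(1)}}$. This boundary term does not vanish. On the transition annulus facing the other soliton, $|\partial_x\Phi_k\,\Phi_j|\approx\kappa^2\mu_k e^{-\mu_j|z|}e^{(\mu_j-\mu_k)|x-z_k|}$, and $\partial_x\phi_k$ integrates to $\pm1$ across that annulus. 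For $\omega_2=-\omega_1$ the two boundary terms have the same sign and add up to roughly $-4\kappa^2\mu\cos\vartheta\,e^{-\mu|z|}$, where $\vartheta$ is the relative phase of $R_1^{(1)}$ and $R_2^{(1)}$. This dominates the remaining terms, which are quadratic in $\vec R_j$. So the intermediate identity fails in general, and no bookkeeping of constants will produce it.

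A proof has to keep the interaction inside $\mathcal{S}(\vec R)$, as the statement of Lemma~\ref{newS} formally permits. That means Taylor-expanding $\mathcal{S}(\vec R+\vec\varepsilon)$ about the full $\vec R$ and controlling $\langle\mathcal{S}'(\vec R),\vec\varepsilon\rangle$ and the $O(\|\vec\varepsilon\|_X^3)$ remainder, rather than splitting $\mathcal{S}(\vec R)$ into single-soliton pieces. Even then, with cutoffs of radius $\frac18\log t$, the $\partial_x\phi_k$-part of that linear term is only $O(t^{-\mu_k/10}\|\vec\varepsilon\|_X)$. So either the localization or the claimed $O(t^{-3})$ error would also have to be changed.
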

\begin{proof}
    Let us $t \in [t_{0}, T^{n}].$ To prove this result, from the previous lemma, it would suffice to show that
    $$\mathcal{S}\left( \Vec{R}(t), t \right)=\sum_{j=1}^{2}\mathcal{S}_{j,loc}\left( \Vec{R}_j(t), t \right)+ O\left(t^{-3}\right).$$
    This follows as in the previous lemma. In fact, let us show some steps with the components of $S$,
\begin{equation*}\label{enesoma}
\begin{aligned}
     E\left(\Vec{R}(t),t\right)  
    & = \int_\rr\left|\sum_{s=1}^{2} \partial_{x} R_{s}^{(1)}\right|^{2} \,dx + \int_\rr\left|\sum_{k=1}^{2}  R_{k}^{(2)}\right|^{2} \,dx+ \frac{\alpha}{2}\int_\rr\left|\sum_{k=1}^{2}  R_{k}^{(4)}\right|^{2} \,dx \\
    & \quad + \frac{\beta}{2} \int_\rr\left|\sum_{k=1}^{2}  R_{k}^{(1)}\right|^{4} \,dx + \alpha\Re \int_\rr\left(\sum_{k=1}^{2} R_{k}^{(4)}\right)\left(\sum_{s=1}^{2} \bar{R}_{s}^{(1)}\right)^{2} \,dx\\
    &\quad + \frac{\alpha}{2}\int_\rr\left|\sum_{k=1}^{2}  R_{k}^{(3)}\right|^{2} \,dx+ \int_\rr\left|\sum_{k=1}^{2}  R_{k}^{(1)}\right|^{2} \,dx
\end{aligned}
\end{equation*}
We will expand the terms of the above integrals, 
$$
\begin{aligned}
 \left|\sum_{s=1}^{2} \partial_{x} R_{s}^{(1)}\right|^{2}&=\left(\partial_{x}R_{1}^{(1)}+\partial_{x} R_{2}^{(1)}\right)\left(\partial_{x} \bar{R}_{1}^{(1)}+\partial_{x} \bar{R}_{2}^{(1)}\right),
\end{aligned}
$$
we have
\begin{equation*}\label{somagradiente}
\begin{aligned}
 \left|\sum_{s=1}^{2} \partial_{x} R_{s}^{(1)}\right|^{2}&=\sum_{s=1}^{2}\left|\partial_{x} R_{s}^{(1)}\right|^{2}+\sum_{\substack{s, m=1 \\ s \neq m}}^{2} \partial_{x} R_{s}^{(1)} \partial_{x} \bar{R}_{j}^{(1)} \\&= \sum_{s=1}^{2}\left|\partial_{x} R_{s}^{(1)}\right|^{2}R_{S}+\sum_{s=1}^{2}\left|\partial_{x} R_{s}^{(1)}\right|^{2}\sum_{\substack{k=1 \\ k\neq s}}^{2}R_{k}+\sum_{\substack{s, m=1 \\ s \neq m}}^{2} \partial_{x} R_{s}^{(1)} \partial_{x} \bar{R}_{j}^{(1)}  
\end{aligned}
\end{equation*}

Now, using the fact that the solitons are bounded (see Proposition \ref{decaimentoquadratico}) and Lemma \ref{solitons}, we obtain
\begin{equation*}\label{somadeorden4}
\begin{aligned}
&\sum_{s=1}^{2}\int_{\R}\left|\partial_{x} R_{s}^{(1)}\right|^{2}\sum_{\substack{k=1 \\ k\neq s}}^{2}R_{k}\,dx+\sum_{\substack{s, m=1 \\ s \neq m}}^{2} \int_{\mathbb{R}} \partial_{x} R_{s}^{(1)} \partial_{x} \bar{R}_{j}^{(1)}R_{j}\,dx\\
&\qquad\leq Ce^{-6\sqrt{\omega_{\star}}|z|}.
\end{aligned}
\end{equation*}
Taking $T_{0}$ sufficiently large such that 
\begin{equation*}\label{T_{0}3}
e^{-6\sqrt{\omega_{\star}}|z|}\leq t^{-3},  
\end{equation*} 
we have that
\begin{equation*}\label{resulsoma}
\begin{aligned}
\sum_{s=1}^{2}\int_{\R}\left|\partial_{x} R_{s}^{(1)}\right|^{2}\sum_{\substack{k=1 \\ k\neq s}}^{2}R_{k}\,dx+\sum_{\substack{s, m=1 \\ s \neq m}}^{2} \int_{\mathbb{R}} \partial_{x} R_{s}^{(1)} \partial_{x} \bar{R}_{j}^{(1)}R_{j}\,dx\leq Ct^{-3},
\end{aligned}
\end{equation*}
from where $$\int_\rr\left|\sum_{s=1}^{2} \partial_{x} R_{S}^{(1)}\right|^{2} \,dx=\sum_{j=1}^{2}\int_\rr\left| \partial_{x} R_{j}^{(1)}\right|^{2} \Phi_j\,dx+Ce^{-3\sqrt{\omega_{\star}}c_{\star}t}.$$
Thus, following the same process, we can write that
\begin{equation*}
\begin{aligned}
E\left(\Vec{R}(t),t\right)
& =\sum_{j=1}^{2}\bigg(\int_{\mathbb{R}}\left|\partial_{x} R_{j}^{(1)}\right|^{2} R_{j}\,dx+ \int\left|\partial_{x} R_{j}^{(1)}\right|^{2} R_{j} \,dx+ \int\left|\partial_{x} R_{j}^{(2)}\right|^{2} R_{j} \,dx \\
&\quad+ \frac{\alpha}{2}\int\left| R_{j}^{(3)}\right|^{2} R_{j} \,dx+ \frac{\alpha}{2}\int\left| R_{j}^{(4)}\right|^{2} R_{j} \,dx+\alpha \Re \int_{\mathbb{R}} R_{j}^{(3)}\bar{R}_{j}^{(1)} R_{j}\,dx\\
&\quad+ \frac{\beta}{2}\int\left| R_{j}^{(1)}\right|^{4} R_{j} \,dx\bigg)+\mathcal{O}\left(t^{-3}\right).
\end{aligned}  
\end{equation*}
Finally, proceeding similarly, we obtain that 
\begin{equation*}\label{m1}
 \begin{aligned}
   Q_2(\Vec{R}(t),t) =\sum_{j=1}^{2}\Im\int_{\mathbb{R}}2R_{j}^{(2)}\overline{R_{j}^{(1)}} R_{j}\,dx+O\left(t^{-3}\right).
\end{aligned}   
\end{equation*}
It follows that, $$\mathcal{S}\left( \Vec{R}(t), t \right)=\sum_{j=1}^{2}\mathcal{S}_{j,loc}\left( \Vec{R}_j(t), t \right)+ O\left(t^{-3}\right).$$
From which we deduce that the expression \eqref{formSfinal} can be written as
$$
\mathcal{S}( \Vec{u}(t), t) = \mathcal{S}\left( \Vec{R}(t), t \right) + \mathcal{H}_{loc}( \vec{\varepsilon}(t), t) +O\left(t^{-3}\right).
$$
\end{proof}
\begin{lemma}\label{VariacionS}
    If $t_{0} >T_{0}$, there exists $C >0$ independent of $n$  such that, for all $t \in [t_{0},T^{n}]$,
$$
\left|\frac{\partial \mathcal{S}(t, \Vec{u}(t))}{\partial t}\right| \leqslant \frac{C}{t^3} \log^{-3} t.
$$
\end{lemma}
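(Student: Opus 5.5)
The plan is to differentiate $\mathcal{S}(t,\vec u(t))=\sum_{k=1}^2\big(E_k(\vec u)-\omega_k Q_k(\vec u)\big)$ in time. The derivative splits into the contribution of the flow of \eqref{system2} and the contribution of the explicit time dependence of the cut-offs $\phi_k(x,t)=\phi(|x-z_k(t)|/\log t)$. Since $E$ and $Q_2$ are conserved (Theorem \ref{wellpo}), the localized densities satisfy local conservation laws. Writing $e(\vec u)$ and $q(\vec u)=2\Im(\bar u\rho)$ for the energy and mass densities, we have
\[
\partial_t e(\vec u)=\partial_x j_E(\vec u),\qquad \partial_t q(\vec u)=\partial_x j_Q(\vec u),
\]
with fluxes $j_E$, $j_Q$ that are quadratic/cubic in $(u,\rho,u_x,v,n)$. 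For instance, $j_Q=-2\Im(\bar u u_x)$ up to sign conventions, and $j_E$ contains $2\Re(\bar\rho u_x)$ together with the wave-part flux $\alpha vn$ and $\alpha|u|^2n$-type terms coming from $v_t=n_x$, $n_t=v_x+(|u|^2)_x$. Integrating by parts then gives
\[
\frac{d}{dt}\mathcal S=\sum_{k=1}^2\int_{\R}\big(e(\vec u)-\omega_k q(\vec u)\big)\partial_t\phi_k\,dx-\sum_{k=1}^2\int_{\R}\big(j_E(\vec u)-\omega_k j_Q(\vec u)\big)\partial_x\phi_k\,dx .
\]
Both integrands are supported in the transition regions $\Omega_k$ where $\partial\phi_k\neq0$, i.e. $\frac1{10}\log t\le|x-z_k(t)|\le\frac18\log t$, with $|\partial_t\phi_k|\lesssim (t\log t)^{-1}$ and $|\partial_x\phi_k|\lesssim(\log t)^{-1}$.

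Next, insert the decomposition $\vec u=\vec R_1+\vec R_2+\vec\varepsilon$ from \eqref{boostrap1} and expand each density and flux into three groups: soliton–soliton terms, terms linear in $\vec\varepsilon$, and terms quadratic or higher in $\vec\varepsilon$.

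\emph{Pure soliton terms.} On $\Omega_k$, Proposition \ref{decaimentoquadratico}(ii) and \eqref{ground} give $|\vec R_j|\lesssim e^{-c\log t}=t^{-c}$ for a fixed $c>0$, since $|x-z_j|\gtrsim\log t$ there. The boundary region of $\phi_1$ also stays at distance $\gtrsim|z|\sim2\log t$ from $z_2$. Quadratic soliton terms are therefore polynomially small. This requires the decay rate of $\Phi_{\omega_k}$ to beat $t^{-3}$, which I would secure by shrinking the constant $\frac18$ relative to the exponent (cf. the remark after Lemma \ref{solitons}(i) that $\omega_\star$ may be adjusted).

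\emph{Linear terms.} Since $\vec R_k$ is a critical point of $\mathcal S_k$ (\eqref{pointcrit}), the linear-in-$\vec\varepsilon$ part of $\frac{d}{dt}\mathcal{S}$ reduces to $\langle \mathcal S_k'(\vec R_k),\dot{\vec\varepsilon}\,\phi_k\rangle$-type expressions that vanish. What survives is commutator terms with $\partial\phi_k$, which are products of $\vec\varepsilon$ with soliton tails on $\Omega_k$. These are bounded by $\|\vec\varepsilon\|_X\, t^{-c}(\log t)^{-1}$. The modulation contributions $\dot z_k,\dot\theta_k$ are controlled through Lemma \ref{modu1}.

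\emph{Quadratic terms.} These are bounded by $(\log t)^{-1}\|\vec\varepsilon\|_X^2+(t\log t)^{-1}\|\vec\varepsilon\|_X^2$, using $H^1\subset L^\infty$ for the cubic and quartic pieces, and are combined with the bootstrap \eqref{hipotese1}.

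The main obstacle is the quadratic flux term $\int j(\vec\varepsilon)\partial_x\phi_k$. With only $\|\vec\varepsilon\|_X\lesssim t^{-1}\log^{-3/2}t$, this gives $t^{-2}\log^{-4}t$, not $t^{-3}\log^{-3}t$. To overcome this I would first exploit the structure of the flux. For the mass-modulated part, the combination $j_E-\omega_k j_Q$ is, at leading order, a Galilean-type momentum flux, and its quadratic part pairs $\varepsilon$ with $\partial_x\varepsilon$ in a way that can be recast as $\partial_x(\cdot)$ times $\partial_x^2\phi_k=O(\log^{-2}t)$. For the remaining pieces, I would use the $\dot z_k$ estimate and the $t\log t$ weight. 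If this still falls short, I would fall back on a sharper virial/monotonicity-type argument: choose $\phi$ monotone and use the sign of the wave-speed flux, so that the bad term has a favorable sign after integration in time on $[t,T_n]$. This is as in Martel--Merle, where a one-sided bound suffices for the energy estimate to which Lemma \ref{VariacionS} feeds.
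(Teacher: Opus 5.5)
\textbf{There is a genuine gap: the proposal does not prove the lemma.} The gap is the one you flag yourself. The quadratic flux term $\sum_k\int (j_E-\omega_k j_Q)(\vec\varepsilon)\,\partial_x\phi_k\,dx$ is only $O(\|\vec\varepsilon\|_X^2/\log t)=O(t^{-2}\log^{-4}t)$. That is a full power of $t$ short of $t^{-3}\log^{-3}t$, and neither remedy recovers it.

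\begin{itemize}
\item The total-derivative idea fails. The quadratic fluxes $\Im(\bar\varepsilon_1\partial_x\varepsilon_1)$, $\Re(\bar\varepsilon_2\partial_x\varepsilon_1)$ and $\varepsilon_3\varepsilon_4$ are not exact $x$-derivatives; only $\Re(\bar\varepsilon_1\partial_x\varepsilon_1)=\frac12\partial_x|\varepsilon_1|^2$ is. Even if they were, trading $\partial_x\phi_k$ for $\partial_x^2\phi_k=O(\log^{-2}t)$ gains a logarithm, not $t^{-1}$.
\item The Martel--Merle monotonicity fallback gives at best a one-sided inequality, which is not the two-sided bound stated. Moreover $\phi_k$ is a bump (increasing on one side of $z_k$, decreasing on the other) and the flux form is indefinite, so there is no sign to use.
\end{itemize}

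The lower-order terms are not under control either. On the annulus $\frac1{10}\log t\le|x-z_k|\le\frac18\log t$ the soliton tails are no smaller than $t^{-c}$ with $c=\frac18\sqrt{1-\omega_k^2}$. So your linear bound $\|\vec\varepsilon\|_X t^{-c}/\log t$ is roughly $t^{-1-c}$, whereas $t^{-3}\log^{-3}t$ would require $c>2$. The pure-soliton piece fails too. In $\int(e-\omega_k q)(\vec R_k)\,\partial_t\phi_k\,dx$ one has $(e-\omega_kq)(\vec R_k)\approx 2(1-\omega_k^2)\Phi_k^2$ on the annulus, and $\partial_t\phi_k\approx-\phi'(s)\,s/(t\log t)$ from the growth of $\log t$. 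Hence this piece is of order at least $t^{-1-\frac14\sqrt{1-\omega_k^2}}$ for nonincreasing $\phi$. Shrinking $\frac18$ moves the annulus toward the core and makes all of this worse. Since $|z|\approx 2\log t$, no placement of the transition region between the two solitons makes both tails there small enough.

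\textbf{Comparison with the paper.} The paper's route is different, and it does not supply the missing power either. It never estimates fluxes.
\begin{itemize}
\item It discards the energy part by appealing to conservation of $E$. This implicitly uses $\phi_1+\phi_2\equiv 1$, which fails for the cutoffs as defined: $\phi_k$ vanishes for $|x-z_k|>\frac18\log t$, so $\phi_1+\phi_2=0$ away from both centers.
\item It then asserts that the localized mass varies only through the motion of the centers, $|\frac{d}{dt}Q_j|\le C|\dot z_j|\,t^{-2}\log^{-2}t$. The bound $|\dot z_j|\lesssim t^{-1}\log^{-3/2}t$ from Lemma~\ref{modu1} then supplies the extra $t^{-1}$.
\end{itemize}
Your flux identity $\partial_t q=\partial_x j_Q$ with $j_Q=-2\Im(\bar u u_x)$ shows instead that
\[
\frac{d}{dt}Q_j=2\int\Im(\bar u u_x)\,\partial_x\phi_j\,dx+\int 2\Im(\bar u\rho)\,\partial_t\phi_j\,dx .
\]
The first term carries no factor $\dot z_j$, and it enters $\mathcal S$ with the $O(1)$ weight $\omega_j$. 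The paper's reduction of the $\bar u\,\partial_t\rho$ contribution to a multiple of $\Im\int|u|^2\psi_j''\,dx$ is not justified, since that expression vanishes identically.

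In constructions where a bound of this strength holds, every localized term carries a coefficient of size $O(t^{-1})$ (e.g.\ momenta weighted by the relative velocities), and the $O(1)$ parts are global conserved quantities. With distinct $O(1)$ weights $\omega_1\neq\omega_2$ on localized masses that mechanism is absent. So the obstruction you found appears to be structural rather than a matter of sharper estimates.
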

\begin{proof}
We observe that for all $t \in [t_{0},T^{n}]$,
$$
\mathcal{S} (\vec{w}(t),t)=E(\vec{w}(t))-\omega_{j} Q_{j}( \vec{w}(t),t)
$$
Since the energy $E$ is conserved by the flow of \eqref{system2}, to estimate the variations of $\mathcal{S}(t, \Vec{u}(t))$ it is enough to study the variations of the localized momentums $Q_{j}( \Vec{u}(t),t)$. 

For $j\in \{1,2\}$, we have 

$$
\begin{aligned}
 &\frac{\partial}{\partial t}\int_{\R}Q_{j}(\Vec{u}(t))dx\\
&= 2 \frac{\partial}{\partial t} \Im \int_{\R} \bar{u}(t) \rho(t) \psi_{j}(x,t)\, d x\\
& =2\Im\int_{\R} \frac{\partial}{\partial t} \bar{u}(t) \rho(t) \psi_{j}(x,t) \, d x+2\Im\int_{\R} \bar{u}(t)\frac{\partial}{\partial t}\rho(t) \psi_{j}(x,t)  \,d x+2\Im\int_{\R} \bar{u}(t)\rho(t)\frac{\partial}{\partial t} \psi_{j}(x,t)  \,d x.
\end{aligned}$$
Now, we observe that 
\begin{equation*}\label{Q0}
    \begin{aligned}
      2\Im\int_{\R} \bar{u}(t)\rho(t)\frac{\partial}{\partial t} \psi_{j}(x,t)  \,d x &=2\Im\int_{\R} \bar{u}(t)\rho(t)\left(\frac{x-z_k(t)}{\log t}\dot{z}_k\right)\psi_{j}^{\prime}(x,t)   \,d x,
    \end{aligned}
\end{equation*}
Similarly,
\begin{equation*}\label{Q1}
    \begin{aligned}
        2\Im \int_{\R} \frac{\partial}{\partial t} u(t) \rho(t) \psi_{j}(x,t) \, d x&=-2\Im\int_{\R} \bar{\rho}(t) \rho(t) \psi_{j}(x,t) \, d x \\
        &=-2\Im\int_{\R}  |\rho(t)|^2 \psi_{j}(x,t) \, d x\\
        &=0.
    \end{aligned}
\end{equation*}
Finally, \begin{equation*}\label{Q2}
    \begin{aligned}
        2\Im\int_{\R} \bar{u}(t)\frac{\partial}{\partial t}\rho(t) \psi_{j}(x,t)  \,d x&=2\Im\int_{\R} \bar{u}(t)\left(-u_{xx}+u+\alpha uv+\beta|u|^2 u\right) \psi_{j}(x,t)  \,d x \\&=2\Im\int_{\R} \bar{u}(t)\partial_{xx}u(t)\psi_{j}(x,t)  \,d x+2\Im\int_{\R} \bar{u}(t)u(t)\psi_{j}(x,t)  \,d x \\&\quad+2\alpha\Im\int_{\R} \bar{u}(t)u(t)v(t)\psi_{j}(x,t)  \,d x+2\Im\int_{\R} \bar{u}(t)|u|^2(t)u(t)\psi_{j}(x,t)  \,d x\\
        &=\frac{|\dot{z}_k|}{t^2 \log^2 t}\Im\int_{\R} \bar{u}(t)u(t)\psi_{j}^{\prime \prime}(x,t)  \,d x.
    \end{aligned}
\end{equation*}
Therefore, 
$$\left|\frac{\partial}{\partial t}\int_{\R}Q_{j}(\Vec{u}(t))dx\right|\leq \frac{C|\dot{z}_k|}{t^2 \log^2 t}\int_{\R}|u(t)|^2\psi_{j}^{\prime \prime}(x,t) \, d x.$$

Remembering  $
\Omega_{k}(t, x)=\left\{x \in \mathbb{R}:\left|x-z_{k}(t)\right| \leq \frac{1}{8} \log t\right\}.
,$ then  $$\left|\frac{\partial}{\partial t}\int_{\R}Q_{j}(\Vec{u}(t))dx\right|\leq \frac{C|\dot{z}_k|}{t^2 \log^2 t}\int_{\R}|u(t)|^2\psi_{j}^{\prime \prime}(x,t) \, d x.$$
Now, as for $j \in \{1,2\}$ we have
\begin{equation*}\label{termomomento1}
\begin{aligned}
 \int_{\Omega_{j}}|u(t)|^{2}   \,dx \leqslant \int_{\Omega_{j}}|R_{1}^{(1)}(t)|^{2}\,dx + \|\Vec{u}(t) - \vec{R}(t)\|_{X}^{2}   \leqslant \int_{\Omega_{j}}|R_{1}^{(1)}(t)|^{2}\,dx + \frac{C}{t^2 \log^3 t}.
\end{aligned}
\end{equation*}
Now, let us estimate the terms involving $R_{1}^{(1)}$.  Note that, by using the soliton decay (Proposition \ref{decaimentoquadratico}), we have
$$\int_{\Omega_{j}} |R_{1}^{(1)}|^2 \,dx\leqslant \int_{\Omega_{j}}e^{\beta |x - z_k(t)|} \, dx, \, \, \, for \, \, \, some \, \, 0<\beta <1.$$
then, we will study

\[
 \int_{|x - z_j(t)| < \frac{1}{8}\log(t)} e^{-\beta |x - z_j(t)|} \, dx.
\]

Taking the change of variable $ u = x - z_j(t) $, we obtain

\[
 \int_{|x - z_j(t)| < \frac{1}{8}\log(t)} e^{-\beta |x - z_j(t)|} \, dx=2 \int_0^{\frac{1}{8} \log(t)} e^{\beta u} \, du 
= \frac{2}{\beta} \left( t^{\frac{1}{8}\beta } - 1 \right)
= \frac{2}{|\beta|} \left(1 - t^{-\frac{1}{8}|\beta| } \right).
\]
Hence, 
\[
\int_{|x - z_j(t)| < \frac{1}{8}\log(t)} e^{-\beta |x - z_j(t)|} \, dx \sim \frac{2}{|\beta|}, \quad \text{since}\, \,  t^{-\frac{1}{8}|\beta| } \to 0.
\]
It follows that for $j \in \{1,2\}$, 
\begin{equation*}\label{varmomem}
\left|\int_{\Omega_{j}} |R_{1}^{(1)}|^2 \,dx\right|\leq C
\end{equation*}

\noindent Consequently, for $j \in \{1,2\}$,
$$\left|\frac{\partial}{\partial t}\int_{\R}Q_{j}(\Vec{u}(t))\,dx\right|\leq  \frac{C |\dot{z}_j(t)|}{t^2 \log^2 t} ,$$
which shows the desired result.
\end{proof}
We will now prove that the operator $\mathcal{H}_{\operatorname{loc}}$ still satisfies the coercivity property.
\begin{proposition}\label{coercivity2}
				There exists $K>0$ such that
				\[\begin{aligned}
					\langle \mathcal{H}_{loc} \vec{\varepsilon},\vec{\varepsilon}\rangle &\geq K\|\vec{\varepsilon}\|_{X}^2-\sum_{k=1}^{2}(\lambda_{k}^{+})^{2}-\sum_{k=1}^{2}(\lambda_{k}^{-})^{2}
				\end{aligned}\]
			\end{proposition}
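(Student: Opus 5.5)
The plan is the standard localization argument of Martel--Merle type, which glues the single-soliton coercivity of Lemma \ref{coer}(v) for each $H_k$ via the cutoffs $\phi_k$. First I would rewrite $\mathcal{H}_{loc}(\vec{\varepsilon})$ as a sum of two localized quadratic forms plus a remainder. Set $\phi_0:=1-\phi_1-\phi_2$; this is nonnegative, since the supports of $\phi_1,\phi_2$ are disjoint because $|z|\sim 2\log t\gg\frac14\log t$. Split each constant-coefficient quadratic term (the $|\partial_x\varepsilon_1|^2,|\varepsilon_1|^2,|\varepsilon_2|^2,\frac{\alpha}{2}|\varepsilon_3|^2,\frac{\alpha}{2}|\varepsilon_4|^2$ terms) as $\int(\cdot)(\phi_1+\phi_2+\phi_0)$. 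The terms carrying a factor $R_j$ or $\Phi_j$ are assigned to the $j$-th piece. Because $R_j$ decays exponentially and $\phi_j\equiv1$ on $|x-z_j|\le\frac{1}{10}\log t$, we have $\int |R_j||\vec\varepsilon|^2(1-\phi_j)\lesssim t^{-c}\|\vec\varepsilon\|_X^2$, using Proposition \ref{decaimentoquadratico}(ii). The quartic term $\frac{\beta}{2}\int|\varepsilon_1|^4$ is bounded by $\|\vec\varepsilon\|_X^4$ via the $H^1\subset L^\infty$ embedding. This gives
\[
\mathcal{H}_{loc}(\vec\varepsilon)=\sum_{k=1}^2\tfrac12\langle H_k(\sqrt{\phi_k}\vec\varepsilon),\sqrt{\phi_k}\vec\varepsilon\rangle+\int \phi_0\,\mathcal{Q}_0(\vec\varepsilon)+O\big((\log t)^{-1}+t^{-c}+\|\vec\varepsilon\|_X^2\big)\|\vec\varepsilon\|_X^2 ,
\]
where $\mathcal{Q}_0$ is the free (constant-coefficient) density and $H_k$ is taken at the translated, phase-rotated soliton $\vec R_k$. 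Commuting $\sqrt{\phi_k}$ through $\partial_x$ produces errors involving $|\partial_x\sqrt{\phi_k}|^2\lesssim(\log t)^{-2}$. I would handle the degeneracy of $\sqrt{\phi_k}$ at the edge of the support by choosing $\phi$ so that $\sqrt\phi$ is smooth, or by working with $\phi_k^2$ instead.

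Second, on the exterior piece $\phi_0$ the form is comparable to $\int\phi_0(|\partial_x\varepsilon_1|^2+|\varepsilon_1|^2+|\varepsilon_2|^2+|\varepsilon_3|^2+|\varepsilon_4|^2)$. This needs $\alpha>0$ (or more precisely that the free part of $H_k$ is positive definite), which is implicit in Lemma \ref{lemmacoercivity}, so it is directly coercive. For each interior piece, apply Lemma \ref{coer}(v) (equivalently Lemma \ref{lemmacoercivity}) to $\vec\xi=\sqrt{\phi_k}\vec\varepsilon$. Its scalar products with $\vec Z_k^{\pm}$ and $\partial_x\vec\Phi_k$ differ from $\lambda_k^{\pm}$ and $\langle\vec\varepsilon,\partial_x\vec\Phi_k\rangle$ only by $O(t^{-c}\|\vec\varepsilon\|_X)$, by the exponential decay in Lemma \ref{coer}(ii). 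The modulation orthogonality \eqref{2orthogonality} makes $\langle\vec\varepsilon,\partial_x\vec R_k\rangle=0$, so those terms are absorbed. Summing, and using $\sum_k\|\sqrt{\phi_k}\vec\varepsilon\|_X^2+\int\phi_0|\vec\varepsilon|^2\gtrsim\|\vec\varepsilon\|_X^2-O((\log t)^{-1})\|\vec\varepsilon\|_X^2$, I get
\[
\mathcal{H}_{loc}(\vec\varepsilon)\ge 2K\|\vec\varepsilon\|_X^2-\sum_k(\lambda_k^+)^2-\sum_k(\lambda_k^-)^2-o(1)\|\vec\varepsilon\|_X^2 .
\]
The conclusion then follows for $T_0$ large, using the bootstrap bound $\|\vec\varepsilon\|_X\lesssim t^{-1}$.

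The main obstacle is the first step. The quadratic form $\mathcal{H}_{loc}$ must actually match $\frac12\langle H_k\cdot,\cdot\rangle$ locally, coefficient by coefficient, including the cross terms $2\omega_k\varepsilon_2\bar\varepsilon_1$ and $2\alpha R_k^{(1)}\varepsilon_3\bar\varepsilon_1$. The phase factor $e^{i\theta_k}$ in $R_k$ must also be accounted for, by conjugating with $T(\theta_k)$ before invoking Lemma \ref{coer}. I would verify this matching carefully first, correcting constants if needed. I would also have to make sure the orthogonality to $\vec\Upsilon_k$ (phase direction) is not needed beyond what Lemma \ref{coer}(v) requires; if it is, I would add it through the modulation in Lemma \ref{2lemamodulation}.
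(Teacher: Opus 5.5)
Your argument is correct. It uses the same Martel--Merle localization mechanism as the paper: the square-root-of-weight substitution, the one-soliton coercivity of Lemma \ref{coer}(v) near each $z_k$, orthogonality to $\partial_x\vec R_k$, and positivity of the free density elsewhere. The bookkeeping is different, though.

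The paper first proves a weighted coercivity for $\mathcal{H}_{\Phi_B}$ with a fixed-scale exponential weight $\Phi_B=\Phi(\cdot/B)$. It pays $C/B$ for commutators and $e^{-\sqrt{\omega_0}B/4}$ for soliton tails. It then glues the pieces using $\phi_k-\Phi_B(\cdot-z_k)\ge -e^{-L/(4B)}$ together with the claim $\sum_k\phi_k\equiv 1$.

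You skip the intermediate weight and localize directly with $\sqrt{\phi_k}$ on the growing scale $\log t$. Commutators then cost $O((\log t)^{-1})$ and tails $O(t^{-c})$, with no auxiliary parameters $B,L$. The only price is choosing $\phi$ so that $\sqrt{\phi}$ is smooth, which you already note.

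Your exterior piece $\phi_0=1-\phi_1-\phi_2$ is in fact necessary. Since $\phi_k$ vanishes for $|x-z_k|>\frac18\log t$ while $|z|\sim 2\log t$, the identity $\sum_k\phi_k\equiv1$ invoked by the paper fails between and outside the solitons. That region has to be handled exactly as you do, by pointwise positivity of the free density, which needs $\alpha>0$.

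Both routes rest on the same unstated inputs:
\begin{itemize}
\item $\mathcal{H}_{loc}$ must coincide locally with $\frac12\langle H_k\cdot,\cdot\rangle$; the formula displayed in Lemma \ref{taylorS} has coefficient discrepancies, as you suspected.
\item $\vec\varepsilon$ must be small enough to absorb the quartic term.
\item The coercive norm is really that of $H^1\times L^2\times L^2\times L^2$, since $\mathcal{H}_{loc}$ controls $\varepsilon_2$ only in $L^2$.
\end{itemize}
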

            \begin{proof}
				First, we give a localized version of Lemma \ref{coer}. Let $\Phi: \mathbb{R} \rightarrow \mathbb{R}$ be a $C^{2}$-function such that $\Phi(x)=\Phi(-x), \Phi^{\prime} \leqslant 0$ on $\mathbb{R}^{+}$with
				\[
				\begin{array}{ll}
					\Phi(x)=1 & \text { on }[0,1] ; \quad \Phi(x)=e^{-x} \quad \text { on }[2,+\infty) \\
					& e^{-x} \leqslant \Phi(x) \leqslant 3 e^{-x} \quad \text { on } \mathbb{R} .
				\end{array}
				\]
				
				Let $B>0$, and let $\Phi_{B}(x)=\Phi(x / B)$. Set
				\[
				\begin{aligned}
					\frac{1}{2}\langle \mathcal{H}_{\Phi_{B}}\vec{\varepsilon}, \vec{\varepsilon}\rangle&=\int_{\R}\Phi_{B}\left(-x_{0}\right)|\partial_{x}\varepsilon_1|^2\,dx+\int_{\R}\Phi_{B}\left(-x_{0}\right)|\varepsilon_1|^2\,dx+\int_{\R}\Phi_{B}\left(-x_{0}\right)|\varepsilon_2|^2\,dx\\&\quad+\frac{\alpha}{2}\int_{\R}\Phi_{B}\left(-x_{0}\right)|\varepsilon_3|^2\,dx+\frac{\alpha}{2}\int_{\R}\Phi_{B}\left(-x_{0}\right)|\varepsilon_4|^2\,dx+\frac{\beta}{2}\int_{\R}\Phi_{B}\left(-x_{0}\right)|\varepsilon_1|^4\,dx\\
    &\quad + \alpha \int_{\R}|\varepsilon_1|^2R_{0}^{(3)}\,dx+ 2\alpha \int_{\R} R_{0}^{(1)}\varepsilon_3\overline{\varepsilon}_1\,dx\\&\quad +2 \omega_0 \int_{\R}\Phi_{B}\left(-x_{0}\right) \varepsilon_2 \overline{\varepsilon}_1\,dx \quad + 2 \beta \int_{\R} \Re(R_0^{(1)} \overline{\varepsilon}_1)R_0^{(1)} \overline{\varepsilon}_1\,dx.
				\end{aligned}
		\]

				For the sake of simplicity, we assume that $x_{0}=0$. We set $z_1=\varepsilon_1\sqrt{\Phi_{B}}$ , $z_2=\varepsilon_2\sqrt{\Phi_{B}}$, $z_3=\varepsilon_3\sqrt{\Phi_{B}}$ and $z_4=\varepsilon_4\sqrt{\Phi_{B}}$. Then, by simple calculations,
\[
 	\int_{\R}\left|\partial_{x} \varepsilon_1\right|^{2} \Phi_{B}\,dx=\int_{\R}\left|\partial_{x} z_1\right|^{2}\,dx+\frac{1}{4} \int|z_1|^{2}\left(\frac{\Phi_{B}^{\prime}}{\Phi_{B}}\right)^{2}\,dx-2 \int_{\R} \partial_{x} z_1 \bar{z}_1 \frac{\Phi_{B}^{\prime}}{\Phi_{B}}\,dx,\]
  $$\int_{\R}\Phi_{B}\varepsilon_2 \partial_{x}\overline{\varepsilon}_1\,dx=\int_{\R}z_2 \partial_{x}\overline{z}_1\,dx-\frac{1}{2}\int_{\R}z_2 \overline{z}_1 \frac{\Phi_{B}^{\prime}}{\Phi_{B}}\,dx$$
  \[
  \int_{\R}|\varepsilon_j|^{2} \Phi_{B}\,dx=\int_{\R}|z_j|^{2}\,dx,
  \quad j=1,2,3,4\] 
  \[
  \int_{\R}|\varepsilon_1|^{4} \Phi_{B}\,dx=\int_{\R}|z_1|^{2}\frac{1}{\Phi_{B}}\,dx,
  \] 
				and \[\int_{\R} \varepsilon_k \varepsilon_j \Phi_{B}\,dx=\int_{\R}z_j z_k\,dx. \quad j \neq k
\]
	 	Since, by definition of $\Phi_{B}$, we have $\left|\Phi_{B}^{\prime}\right| \leqslant(C / B) \Phi_{B}$, we obtain
 \[
 \begin{split}
 \int_{\R}\left|\partial_{x} z_1\right|^{2}\,dx-\frac{C}{B} \int_{\R}\left(\left|\partial_{x} z_1\right|^{2}+|z_1|^{2}\right) \,dx& \leqslant \int_{\R}\left|\partial_{x} \varepsilon_1\right|^{2} \Phi_{B}\,dx\\&
 \leqslant \int_{\R}\left|\partial_{x} z_1\right|^{2}\,dx+\frac{C}{B} \int_{\R}\left(\left|\partial_{x} z_1\right|^{2}+|z_1|^{2}\right)\,dx.
 \end{split}\]
 Also,  \[
  \int_{\R}|\varepsilon_1|^{4} \Phi_{B}\,dx\geq \int_{\R}|z_1|^{2}\,dx,
  \] 
 We also have
 $$\alpha \int_{\R}|\varepsilon_1|^2R_{0}^{(3)}\,dx=\alpha \int_{\R}|z_1|^2R_{0}^{(3)}\frac{1}{\Phi_{B}}\,dx,$$
\[
 \begin{aligned} 	2\alpha \int_{\R} R_{0}^{(1)}\varepsilon_3\overline{\varepsilon}_1\,dx=2\alpha \int_{\R} R_{0}^{(1)}z_3\overline{z}_1\frac{1}{\Phi_{B}}\,dx
				\end{aligned}
\]
and $$ 2 \beta \int_{\R} \Re(R_0^{(1)} \overline{\varepsilon}_1)R_0^{(1)} \overline{\varepsilon}_1\,dx= 2 \beta \int_{\R} \Re(R_0^{(1)} \overline{z}_1)R_0^{(1)} \overline{z}_1\frac{1}{\Phi_{B}}\,dx$$
								Since $\Phi_{B} \equiv 1$ on $[-B, B]$ and $\Phi_{\omega_{0}}^{(1)}(x) \leqslant C e^{-\left(\sqrt{\omega_{0}} / 2\right)|x|}$, we have, for all $x \in \mathbb{R}$,
				
\[
				\left|\frac{1}{\Phi_{B}}-1\right| \Phi_{\omega_{0}}^{(1)}(x) \leqslant e^{-\left(\sqrt{\omega_{0}}-2 / B\right)|x| / 2} \leqslant C e^{-\sqrt{\omega_{0}} B / 4} \leqslant \frac{1}{B}
\]
								for $B$ large enough. Thus,
\[
				\begin{aligned}
					\alpha \int_{\R}|\varepsilon_1|^2R_{0}^{(3)}\,dx\leqslant C\alpha \int_{\R}|z_1|^2R_{0}^{(3)}\,dx+\frac{C}{B} \int_{\R}|z_1|^{2}\,dx.
				\end{aligned}
\]
\[
 \begin{aligned} 	2\alpha \int_{\R} R_{0}^{(1)}\varepsilon_3\overline{\varepsilon}_1\,dx\leq C \int_{\R} R_{0}^{(1)}z_3\overline{z}_1\,dx+\frac{C}{B}\int_{\R} z_3\overline{z}_1\,dx
				\end{aligned}
\]
and $$ 2 \beta \int_{\R} \Re(R_0^{(1)} \overline{\varepsilon}_1)R_0^{(1)} \overline{\varepsilon}_1\,dx\leq C  \int_{\R} \Re(R_0^{(1)} \overline{z}_1)R_0^{(1)} \overline{z}_1\,dx+\frac{C}{B}\int_{\R} \Re( \overline{z}_1) \overline{z}_1\,dx.$$
Collecting these calculations, we obtain
\[
				\langle \mathcal{H}_{\Phi_{B}}\vec{\varepsilon}, \vec{\varepsilon}\rangle  \geqslant \langle \mathcal{H}_{0}{\vec{z}}, {\vec{z}}\rangle-\frac{C}{B} \int\left(\left|\partial_{x} z_1\right|^{2}+|z_1|^{2}+ |z_2|^{2}+|z_3|^{2}+|z_4|^{2}+|z_1|^{4}\right).
\]
								Thanks to the orthogonality conditions on ${\vec z}=(z_1,z_2,z_3,z_4)$, we verify easily using the property of $\Phi_{B}$ that
$ 
(\vec{z},\partial_{x}\vec{R})_{L^2(\R)}=0,
$ 
				for $B$ large enough. By Lemma \ref{coer}, we obtain  for $B$ large enough that
\[
				\begin{aligned}
   \langle \mathcal{H}_{\Phi_{B}}\vec{\varepsilon}, \vec{\varepsilon}\rangle  & \geqslant\left(C-\frac{C}{B}\right)\|{\vec{z}}\|_{X}^{2}+ \sum_{k=1}^{2}(\lambda_{k}^{+})^{2}+\sum_{k=1}^{2}(\lambda_{k}^{-})^{2}\\& \geqslant \frac{C}{2}\|{\vec{z}}\|_{X}^{2} + \sum_{k=1}^{2}(\lambda_{k}^{+})^{2}+\sum_{k=1}^{2}(\lambda_{k}^{-})^{2}\\&\geqslant \frac{C}{2}\left(1-\frac{C}{B}\right) \int_{\R}\left(\left|\partial_{x} z_1\right|^{2}+|z_1|^{2}+ |z_2|^{2}+|z_3|^{2}+|z_4|^{2}+|z_1|^{4}\right) \Phi_{B}\,dx \\& + \sum_{k=1}^{2}(\lambda_{k}^{+})^{2}+\sum_{k=1}^{2}(\lambda_{k}^{-})^{2}\\
					& \geqslant \frac{C}{4} \int_{\R}\left(\left|\partial_{x} z_1\right|^{2}+|z_1|^{2}+ |z_2|^{2}+|z_3|^{2}+|z_4|^{2}+|z_1|^{4}\right) \Phi_{B}\,dx\\&+ \sum_{k=1}^{2}(\lambda_{k}^{+})^{2}+\sum_{k=1}^{2}(\lambda_{k}^{-})^{2},
				\end{aligned}
\]
				implying \begin{equation*}\label{claim1}
					\langle \mathcal{H}_{\Phi_{B}}\vec{\varepsilon}, \vec{\varepsilon}\rangle \geqslant C\|\vec{\varepsilon}\|_{X}^{2}+ \sum_{k=1}^{2}(\lambda_{k}^{+})^{2}+\sum_{k=1}^{2}(\lambda_{k}^{-})^{2}.   
				\end{equation*}
 Now, let $B>B_{0}$, and   $L>0$. Since $\sum_{k=1}^{2} \phi_{k}(t) \equiv 1$, we decompose $\langle \mathcal{H}\vec{\varepsilon}, \vec{\varepsilon}\rangle$ as follows:
\[
				\begin{aligned}
					&\langle \mathcal{H}_{loc}\vec{\varepsilon}, \vec{\varepsilon}\rangle=\\ & \sum_{k=1}^{2} \int_{\R} \Phi_{B}\left(-z_{k}(t)\right)\bigg\{\left|\partial_{x} \varepsilon_1\right|^{2}+|\varepsilon_1|^{2}+\frac{\beta}{2}|\varepsilon_1|^{4}+|\varepsilon_2|^{2}+\frac{\alpha}{2}|\varepsilon_3|^{2}+\frac{\alpha}{2}|\varepsilon_4|^{2}+2\omega_k \overline{\varepsilon}_1 \varepsilon_2\\
   & \quad + \alpha \int_{\R}|\varepsilon_1|^2R_{k}^{(3)}\,dx+ 2\alpha \int_{\R} R_{k}^{(1)}\varepsilon_3\overline{\varepsilon}_1\,dx+2 \beta \int_{\R} \Re(R_k^{(1)} \overline{\varepsilon}_1)R_k^{(1)} \overline{\varepsilon}_1\bigg\}\,dx\\
					& +\sum_{k=1}^{2}\int_{\R}\left(\phi_{k}-\Phi_{B}\left(-z_{k}(t)\right)\right)\bigg\{\left|\partial_{x} \varepsilon_1\right|^{2}+|\varepsilon_1|^{2}+\frac{\beta}{2}|\varepsilon_1|^{4}+|\varepsilon_2|^{2}+\frac{\alpha}{2}|\varepsilon_3|^{2}+\frac{\alpha}{2}|\varepsilon_4|^{2}-2\omega_k \overline{\varepsilon}_1 \varepsilon_2\bigg\}\,dx.
				\end{aligned}
	\]
				By \eqref{claim1}, for any $k=1,2$, we have  for $B$ large enough that
\[
				\begin{aligned}
			 &\int_{\R} \Phi_{B}\left(-x_k(t)\right)		\bigg\{\left|\partial_{x} \varepsilon_1\right|^{2}+|\varepsilon_1|^{2}+\frac{\beta}{2}|\varepsilon_1|^{4}+|\varepsilon_2|^{2}+\frac{\alpha}{2}|\varepsilon_3|^{2}+\frac{\alpha}{2}|\varepsilon_4|^{2}-2\omega_k \overline{\varepsilon}_1 \varepsilon_2\\
     &\quad -\alpha c_{k}\varepsilon_3 \varepsilon_4-2c_{k}\partial\overline{\varepsilon}_1 \varepsilon_2\bigg\}\,dx   + \alpha \int_{\R}|\varepsilon_1|^2R_{k}^{(3)}\,dx+ 2\alpha \int_{\R} R_{k}^{(1)}\varepsilon_3\overline{\varepsilon}_1\,dx+2 \beta \int_{\R} \Re(R_k^{(1)} \overline{\varepsilon}_1)R_k^{(1)} \overline{\varepsilon}_1\,dx
					\\&\geqslant \lambda_{k} \int_{\R}\Phi_{B}\left(-z_{k}(t)\right)\left(\left|\partial_{x} \varepsilon_1\right|^{2}+|\varepsilon_2|^{2}+|\varepsilon_1|^{2}+|\varepsilon_3|^{2}+|\varepsilon_4|^{2}\right)\,dx.
				\end{aligned}
\]
				Moreover, by the properties of $\Phi_{B}$ and $\phi_{k}(t)$, for $L$ large enough, we have
				
\[
				\phi_{k}(t)-\Phi_{B}\left(-x_{k}(t)\right) \geqslant-e^{-L /(4 B)},
\]
	and  for
	 $\delta_{k}=\delta_{k}\left(c_k,\omega_{k}\right)>0$,
\[
		\begin{aligned}
		  &  \left|\partial_{x} \varepsilon_1\right|^{2}+|\varepsilon_1|^{2}+\frac{\beta}{2}|\varepsilon_1|^{4}+|\varepsilon_2|^{2}+\frac{\alpha}{2}|\varepsilon_3|^{2}+\frac{\alpha}{2}|\varepsilon_4|^{2}-2\omega_k \overline{\varepsilon}_1 \varepsilon_2 \\&\geqslant \delta_{k}\left(\left|\partial_{x} \varepsilon_1\right|^{2}+|\varepsilon_1|^{2}+|\varepsilon_2|^{2}+|\varepsilon_3|^{2}+|\varepsilon_4|^{2}\right) \geqslant 0.
		\end{aligned}		
\]
				So,
\[
				\begin{aligned}
					& \int_{\R}\left(\phi_{k}-\Phi_{B}\left(-z_k(t)\right)\right)\bigg\{\left|\partial_{x} \varepsilon_1\right|^{2}+|\varepsilon_1|^{2}+\frac{\beta}{2}|\varepsilon_1|^{4}+|\varepsilon_2|^{2}+\frac{\alpha}{2}|\varepsilon_3|^{2}+\frac{\alpha}{2}|\varepsilon_4|^{2}-2\omega_k \overline{\varepsilon}_1 \varepsilon_2\bigg\}\,dx\\
     & \geqslant \delta_{k} \int_{\R}\left(\varphi_{k}(t)-\Phi_{B}\left(-x_{k}(t)\right)\right)\left(\left|\partial_{x} \varepsilon_1\right|^{2}+|\varepsilon_1|^{2}+|\varepsilon_2|^{2}+|\varepsilon_3|^{2}+|\varepsilon_4|^{2}\right) \,dx\\
					&\geqslant-C e^{-L /(4 B)} \int_{\R}\left(\left|\partial_{x} \varepsilon_1\right|^{2}+|\varepsilon_1|^{2}+|\varepsilon_2|^{2}+|\varepsilon_3|^{2}+|\varepsilon_4|^{2}\right) \,dx.
				\end{aligned}
\]
    Thus, our above considerations reveal that 
\[
				\begin{aligned}
				&\langle \mathcal{H}_{loc}\vec{\varepsilon}, \vec{\varepsilon}\rangle+ \sum_{k=1}^{2}(\lambda_{k}^{+})^{2}+\sum_{k=1}^{2}(\lambda_{k}^{-})^{2}\\&\geqslant K \int_{\R}\left(\sum_{k=1}^{2} \phi_{k}\right)\left(\left|\partial_{x} \varepsilon_1\right|^{2}+|\varepsilon_1|^{2}+|\varepsilon_2|^{2}+|\varepsilon_3|^{2}+|\varepsilon_4|^{2}\right)\,dx\\
     &\quad-C e^{-L /(4 B)} \int_{\R}\left(\left|\partial_{x} \varepsilon_1\right|^{2}+|\varepsilon_1|^{2}+|\varepsilon_2|^{2}+|\varepsilon_3|^{2}+|\varepsilon_4|^{2}\right)\,dx  
				\end{aligned}
\]
				
				and since $\sum_{k=1}^{2} \phi_{k}(t) \equiv 1$, we obtain the result by taking $L$ large enough.
			\end{proof}
 \begin{lemma}
       \label{Bootstrap3}
There exist $T_{0}\in \R$   and $n_{0} \in\N$ such that, if for all $n \geq n_{0}$, every approximate solution $\Vec{u}^n$ is defined on $[T_{0},T^{n}]$, and for all $t \in[t_{0},T^{n}]$, with $t_{0}\in [T_{0},T^{n}]$,
\begin{equation*}
\begin{aligned}
&\left| (\kappa g_0 )^{-1/2}e^{\frac{|z(t)|}{2}}-t\right|\leq t^{-1}\log^{-1/2}  t, \, \, \quad\|\vec{\varepsilon}\|_{X}\leq  \frac{1}{t \log^{3/2} t}\\
& \sum_{k=1,2}\left|\lambda_{k}^{+}(t)\right| \leq t^{-3 / 2}    \, \, \, \sum_{k=1,2}\left|\lambda_{k}^{-}(t)\right| \leq t^{-3 / 2}.
\end{aligned}
\end{equation*}
then for all $t\in[t_{0},T^{n}]$, 
\begin{equation*}
  \begin{aligned}
& \|\vec{\varepsilon}\|_{X}\leq \frac{1}{2t \log^{3/2} t} \quad \quad \left| (\kappa g_0 )^{-1/2}e^{\frac{|z(t)|}{2}}-t\right|\leq \frac{1}{2}\log^{-1/2}  t, 
\\&\sum_{k=1,2}\left|\lambda_{k}^{+}(t)\right| \leq \frac{1}{2}t^{-3 / 2}, 
\sum_{k=1,2}\left|\lambda_{k}^{-}(t)\right| \leq \frac{1}{2}t^{-3 / 2}.
\end{aligned}  
\end{equation*}
   \end{lemma}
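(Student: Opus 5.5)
We argue backwards in time from $T^n$, where by Remark \ref{remarklast} and \eqref{datalast} we have $\vec{\varepsilon}(T^n)$ essentially zero, $\lambda_k^+(T^n)=0$ and $|\lambda_k^-(T^n)|=|a_{k,n}|\le T_n^{-3/2}$. Under the bootstrap hypotheses the quantities are improved one at a time: first $\|\vec{\varepsilon}\|_X$ by an energy argument, then $\lambda_k^+$ by integrating the ODE of Lemma \ref{direction}, then the distance $|z|$ via the modulation equations. The unstable directions $\lambda_k^-$ cannot be improved by integration; for them we plan the usual topological (Brouwer) argument on the choice of $(a_{1,n},a_{2,n})$ through Lemma \ref{linearcom}.

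\emph{Energy estimate.} By Lemma \ref{newS}, $\mathcal{S}(\vec u(t),t)-\mathcal{S}(\vec R(t),t)=\mathcal{H}_{loc}(\vec\varepsilon(t),t)+O(t^{-3})$. Integrating Lemma \ref{VariacionS} on $[t,T^n]$ gives
\[
|\mathcal{S}(\vec u(t),t)-\mathcal{S}(\vec u(T^n),T^n)|\lesssim t^{-2}\log^{-3}t .
\]
We also need $\mathcal{S}(\vec R(t),t)-\mathcal{S}(\vec R(T^n),T^n)$ to be small. Its derivative involves $\dot z_k$, $\dot\theta_k$ and $\partial_tG$, and should be controlled by Lemma \ref{modu1}, using that $\vec R_k$ is a critical point of $\mathcal{S}_k$ so that first-order terms cancel up to interaction terms of size $t^{-3}$. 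Proposition \ref{coercivity2} then gives
\[
K\|\vec\varepsilon(t)\|_X^2\le \sum_k\big((\lambda_k^+)^2+(\lambda_k^-)^2\big)+C t^{-2}\log^{-3}t+O(t^{-3}) .
\]
Since $(\lambda_k^\pm)^2\le t^{-3}$, this yields $\|\vec\varepsilon\|_X\le \tfrac12 t^{-1}\log^{-3/2}t$ after choosing the constant in the bootstrap, that is, $C^\star$ large relative to $K^{-1/2}$ and $T_0$ large.

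\emph{Stable direction.} Lemma \ref{direction} gives $|\frac{d}{dt}(e^{\epsilon_0 t}\lambda_k^+)|\le e^{\epsilon_0 t}(\|\vec\varepsilon\|_X^2+t^{-3})$. Integrating from $t$ to $T^n$ with $\lambda_k^+(T^n)=0$ gives $|\lambda_k^+(t)|\lesssim t^{-2}\log^{-3}t+t^{-3}\le\tfrac12 t^{-3/2}$.

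\emph{Distance.} From Lemma \ref{2lemamodulation}, Lemma \ref{solitons} and Lemma \ref{modu1}, the relative coordinate obeys $\ddot z\approx -2\kappa g_0 e^{-|z|}$ up to errors $O(t^{-2}\log^{-1}t)$. We work with $w=(\kappa g_0)^{-1/2}e^{|z|/2}$, for which $\dot w^2 - 1$ is nearly conserved: the model ODE is solved exactly by $w=t$. We estimate $\frac{d}{dt}(\dot w^2-\dots)$ by the error terms, integrate from $T^n$ using $|e^{\bar z_n/2}-T_n|<\log T_n$, and then integrate once more to obtain $|w-t|\le\tfrac12\log^{-1/2}t$. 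If the accumulated error is too large, we fall back on choosing $\bar z_n$ also by a continuity argument.

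\emph{Unstable direction.} Here $\frac{d}{dt}\lambda_k^-\approx\epsilon_0\lambda_k^-$, so backwards in time $\lambda^-$ decays unless it starts at the wrong size. Suppose that for every $a\in B(0,T_n^{-3/2})$ the exit time $T^\star(a)>T_0$. On the boundary $\sum|\lambda_k^-|=t^{-3/2}$ the flow is transversal: $\frac{d}{dt}(t^{3}|\lambda^-|^2)$ has a strict sign, because $2\epsilon_0|\lambda^-|^2$ dominates $t^{-3}\log^{-3}t\cdot t^{-3/2}$. So $a\mapsto T^{\star}(a)^{3/2}\lambda^-(T^\star(a))$ is a continuous retraction of the ball onto its boundary, which is impossible. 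Lemma \ref{linearcom} is used to prescribe $\lambda_k^-(T_n)=a_k$ exactly with $\lambda_k^+(T_n)=0$.

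\emph{Main obstacle.} I expect the main obstacle to be the energy step, specifically controlling the time variation of $\mathcal{S}(\vec R(t),t)$ to order $o(t^{-2}\log^{-3}t)$. The crude Lemma \ref{modu1} bounds give only $t^{-2}$, so one must exploit cancellation from $\mathcal{S}_k'(\vec\Phi_k)=0$ and the $\log t$ localization of $\phi_k$.
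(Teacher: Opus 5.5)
Your handling of $\vec\varepsilon$ and $\lambda_k^+$ broadly follows the paper: integrate Lemma \ref{VariacionS} on $[t,T_n]$, use $\vec u(T_n)=\vec R(T_n)$, apply Lemma \ref{newS} and Proposition \ref{coercivity2}, and integrate the $\lambda^+$ equation from $\lambda_k^+(T_n)=0$. The distance step, however, fails, and the paper deliberately does not close it by integration. It defers $z$ to Lemma \ref{T0}. Write $A=(\kappa g_0)^{-1/2}e^{|z|/2}$. The model ODE $\ddot z=-2\kappa g_0e^{-|z|}$ has the whole time-translated family $A=t-t_0$ of zero-energy solutions, so $A-t$ is a neutral direction that integration from $T_n$ cannot damp. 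Moreover, the available error $|\dot A-1|\lesssim t^{-1}\log^{-1}t$ from \eqref{ness3} integrates over $[t,T_n]$ to $\log\log T_n-\log\log t$, which is unbounded in $n$ for fixed $t\ge T_0$. Even at $t=T_n$, the final condition $|e^{\bar z_n/2}-T_n|<\log T_n$ is compatible with $|A(T_n)-T_n|$ of order $\log T_n$, far above your target $\tfrac12\log^{-1/2}T_n$. So $\bar z_n$ must be tuned, and tuned jointly with $a$. In Lemma \ref{T0} the paper prescribes $A(T_n)=T_n+\hat\xi T_n\log^{-1/2}T_n$ and $\lambda^-(T_n)=\hat a T_n^{-3/2}$ with $(\hat\xi,\hat a)\in\mathbb D=[-1,1]\times\mathcal B(1)$, and checks that both exits are transversal. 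For $D=(A-t)^2t^{-2}\log t$ it gets $\dot D(T^\star)<-1/T^\star$: the window $t\log^{-1/2}t$ shrinks backward while $A-t$ barely moves, which is where \eqref{ness3} enters. For $L=t^3|\lambda^-|^2$ it gets $\dot L(T^\star)<0$. Then $(\hat\xi,\hat a)\mapsto(\Psi_1,\Psi_2)(T^\star)$ is a continuous retraction of $\mathbb D$ onto $\partial\mathbb D$, which is impossible. Your fallback of choosing $\bar z_n$ ``also by a continuity argument'' points this way. But a separate choice followed by Brouwer in $a$ alone is not enough as stated: the exit time depends on both parameters, and a selected $\bar z_n(a)$ need not be continuous in $a$. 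Also, the nearly conserved quantity is the ODE energy $\frac12\dot z^2-2\kappa g_0e^{-|z|}=2(\dot w^2-1)/w^2$, not $\dot w^2-1$; the paper uses it only to reach \eqref{ness3}.

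Two further corrections. First, your signs for $\lambda^\pm$ contradict your role assignment. From $\bigl|\frac{d}{dt}(e^{\epsilon_0 t}\lambda_k^+)\bigr|\le e^{\epsilon_0 t}(\|\vec\varepsilon\|_X^2+t^{-3})$ and $\lambda_k^+(T_n)=0$ you only get $|\lambda_k^+(t)|\le\int_t^{T_n}e^{\epsilon_0(\tau-t)}(\|\vec\varepsilon(\tau)\|_X^2+\tau^{-3})\,d\tau$, whose kernel is exponentially large. Likewise, with $\frac{d}{dt}\lambda^-\approx+\epsilon_0\lambda^-$ you get $\dot L>0$ on $\{L=1\}$, so that face is entered, not exited, backward in time, and the retraction has nothing to act on. The convention that makes your assignment work is $\frac{d}{dt}\lambda^\pm\approx\pm\epsilon_0\lambda^\pm$. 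This is what the paper's computations actually use: $|\lambda_k^+(t)|\lesssim e^{\epsilon_0 t}\int_t^{T_n}e^{-\epsilon_0\tau}\tau^{-2}\,d\tau$ and $\dot L\le(3t^{-1}-2\epsilon_0)L+\dots$. Read the sign in Lemma \ref{direction} accordingly. Second, the paper does not handle your ``main obstacle'' by differentiating $\mathcal S(\vec R(t),t)$. It simply replaces $\mathcal S(\vec R(t),t)$ by $\mathcal S(\vec u(T_n),T_n)=\mathcal S(\vec R(T_n),T_n)$. Within its framework this rests on the pointwise decoupling $\mathcal S(\vec R(t),t)=\sum_j\mathcal S_{j,loc}(\vec R_j(t),t)+O(t^{-3})$ from the proof of Lemma \ref{newS}. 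Each single-soliton term is independent of $(z_j,\theta_j)$ by translation and phase invariance, up to cut-off tails. The $O(t^{-3})$ error is therefore never integrated in time, and the $t^{-2}$ loss you anticipate does not arise.
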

\begin{proof}
    We start with the estimate of $\vec{\varepsilon}$. For this we use the Proposition \ref{VariacionS} and Lemma \ref{coercivity2}. First, we can observe that by integrating over   $[t,T_n]$ in Proposition \ref{VariacionS} and from remark \ref{remarklast}, it follows that $$|S(\vec{u}(t))-S(\vec{R}(t))|=|S(\vec{u}(t))-S(\vec{u}(T_n))|\leq C_0t^{-2}\log^{-3} t.$$

Thus by the Lemma \ref{newS}  and Lemma \ref{coercivity2}, we have,
$$
\|\vec{\varepsilon}\|_{X}^{2} \leq C_{0}t^{-2}\log^{-3} t+C_{1} t^{-3}
$$
Therefore for large enough $T_{0}$ depending on $C_{0}$ and $C_{1}$ we have
$$
\|\vec{\varepsilon}\|_{X}^{2} \leq  C t^{-2} \log ^{-3} t.
$$
 Then the estimate on $\vec{\varepsilon}$ can be strictly improved.

Now we are going to estimate on $\lambda_{k}^{+}$. Using the Proposition \ref{direction} and the initial data in \eqref{condlast}, we get
$$
\left|\lambda_{k}^{+}(t)\right| \lesssim C e^{\epsilon_{0}^k t} \int_{t}^{T_{n}} e^{-\epsilon_{0}^k \tau} \tau^{-2}\, d \tau \leq \frac{C^{2}}{\epsilon_{0}^k} t^{-2} \leq \frac{1}{4} t^{-3 / 2}.
$$
for $T_{0}$ to be large enough.
\end{proof}
Next, we use a topological argument to close the bootstrap on the parameters $z$ and $\lambda_{k}^{-}$ for $k=1,2$. We will show that there exists a choice of initial data, $\left|\lambda_{k}\right|\leq T_{n}^{-3 / 2}$ 
and $\bar{z}>0$ such that $T^{\star}=T_{0}$. This argument will thus conclude the proof of Lemma \ref{Bootstrap3}.

\begin{lemma}\label{T0}
   There exists $n_{0} \geq 1$ large enough, such that for $n \geq n_{0}$ there exists $\bar{z}_{n}>0$ and $|a_{k, n} | \leq T_{n}^{-3 / 2}$ 
   for $k=1,2$ such that $T^{\star}=T_{0}$. 
\end{lemma}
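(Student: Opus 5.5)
We argue by contradiction with a Brouwer-type topological argument, as in \cite{ARYAN}. Suppose that for every choice of $\bar z_n$ with $|e^{\bar z_n/2}-T_n|<\log T_n$ and every $(a_{1,n},a_{2,n})$ in the closed ball $\bar B(T_n^{-3/2})\subset\R^2$ we have $T^\star>T_0$. By Lemma~\ref{Bootstrap3}, the estimates on $\vec\varepsilon$ and $\lambda_k^+$ are strictly improved on $[T^\star,T_n]$. Hence, by continuity of the flow (Theorem~\ref{wellpo}) and of the modulation parameters (Lemma~\ref{2lemamodulation}), at $t=T^\star$ one of the remaining bootstrap inequalities in \eqref{hipotese1} must be saturated. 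That is, either
\[
\sum_{k=1,2}|\lambda_k^-(T^\star)|=(T^\star)^{-3/2}
\quad\text{or}\quad
\left|(\kappa g_0)^{-1/2}e^{|z(T^\star)|/2}-T^\star\right|=\log^{-1/2}T^\star .
\]
We treat the two unstable directions separately: $\lambda^-=(\lambda_1^-,\lambda_2^-)$, governed by Lemma~\ref{direction}, and the distance $z$.

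\textbf{Step 1: the $\lambda^-$ direction.} By Lemma~\ref{direction}, $\frac{d}{dt}\lambda_k^- = \epsilon_0^k\lambda_k^- + O(\|\vec\varepsilon\|_X^2+t^{-3})$. Under the bootstrap assumption $\|\vec\varepsilon\|_X^2\lesssim t^{-2}\log^{-3}t$, so the error is $o(t^{-3/2})$ and we obtain
\[
\frac{d}{dt}\Big(t^{3}\sum_k(\lambda_k^-)^2\Big)\ \ge\ c\,t^{3}\sum_k(\lambda_k^-)^2>0
\]
whenever $t^{3}\sum_k(\lambda_k^-)^2$ is comparable to $1$. Thus the exit is transversal, and the exit time $T^\star(a)$ depends continuously on the data $a=(a_1,a_2)$; Lemma~\ref{linearcom} guarantees that $\lambda_k^-(T_n)=a_k$ exactly. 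The map
\[
a\mapsto (T^\star)^{3/2}\lambda^-(T^\star)
\]
is then continuous from $\bar B(T_n^{-3/2})$ onto the unit sphere, and equals the identity (after rescaling) on the boundary. This contradicts Brouwer's no-retraction theorem.

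\textbf{Step 2: the $z$ direction.} Set $\zeta(t)=(\kappa g_0)^{-1/2}e^{|z(t)|/2}$. Lemma~\ref{2lemamodulation} and Lemma~\ref{modu1} only control $\dot z_k$ at order $\|\vec\varepsilon\|_X$, which is too weak here. Instead I would derive a second-order equation: differentiate a momentum-type quantity, for instance $\langle\varepsilon_2,\partial_xR_k^{(1)}\rangle$ combined with the orthogonality conditions, and use \eqref{4}–\eqref{5}. This should give, to leading order,
\[
\ddot{|z|}\simeq 2\kappa g_0\,e^{-|z|}\quad(\text{interaction sign as in }\ddot z=-2e^{-z}\text{ up to orientation}),
\]
with errors $O(t^{-2}\log^{-3/2}t)$. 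Here the bound $\|\partial_tG\|_{L^2}\lesssim t^{-3}$ from Lemma~\ref{modu1}(iii) is used to control the time derivative of the interaction terms. From this equation the first integral
\[
\dot{|z|}^2-4\kappa g_0e^{-|z|}
\]
is nearly conserved, and it vanishes at $T_n$ if we also choose $\dot z(T_n)$ appropriately. This leads to
\[
\dot\zeta = 1+O(\log^{-1}t),
\]
so $\zeta(t)-t$ is controlled by its value at $T_n$ plus an integrated error. The remaining freedom $\bar z_n$ is used one-dimensionally: the quantity $\zeta(T^\star)-T^\star$, viewed as a function of $\bar z_n$, is continuous and changes sign between the two endpoints $e^{\bar z_n/2}=T_n\pm\log T_n$. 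The intermediate value theorem, applied jointly with the Brouwer argument of Step~1 (via a product-map/degree argument on $\bar B\times[\,\cdot\,]$), then produces data for which neither exit occurs, so $T^\star=T_0$.

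\textbf{Main obstacle.} I expect the difficulty to lie in Step~2. The paper's estimates on $\dot z$ are of order $t^{-1}\log^{-3/2}t$, which is borderline: integrating over $[t,T_n]$ gives an error only of size $\log^{-1/2}t$ in $|z|$, not an improvement by a factor $\tfrac12$ in $\zeta$. One must therefore genuinely exploit the refined ODE \eqref{estimative2.5}, together with \eqref{newtheta} and the precise interaction law \eqref{3.5}, and show that the errors in $\ddot z$ are $o(t^{-2})$ after integration. If this fails, my fallback is to absorb the $z$-exit into the topological argument rather than improve it. That is, treat $\zeta-t$ as a third unstable coordinate, and use that $\frac{d}{dt}(\zeta-t)$ has a definite sign at the exit boundary; this sign comes from the repulsive dynamics of $\ddot z=-2e^{-z}$ linearized around $2\log t$. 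Brouwer's theorem would then be applied on the three-dimensional set of admissible $(a_1,a_2,\bar z_n)$.
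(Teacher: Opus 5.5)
Your fallback is exactly the paper's proof, but your primary plan does not close, and both transversality arguments are wrong as stated.

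\textbf{The $z$-direction can only be selected, not improved.} Write $A=(\kappa g_0)^{-1/2}e^{|z|/2}$ (your $\zeta$). The first integral gives only $\dot A=1+o(1)$, with no sign. So $A-t$ behaves like the free time shift in $e^{|z|/2}\approx\sqrt{\kappa g_0}\,(t+c)$. Integrating from $T_n$ produces a drift that is not small compared with the window at times $t\ll T_n$; with your $\dot\zeta-1=O(\log^{-1}t)$ it can be of size $T_n/\log T_n$. Relative to its window, which shrinks backward in time, $A-t$ is a backward-unstable coordinate, just like $\lambda^-$.

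Consequently Step 1 is not a self-contained contradiction. Under the contradiction hypothesis some data leave through the $z$-constraint, so $a\mapsto (T^\star)^{3/2}\lambda^-(T^\star)$ is not sphere-valued on the whole ball, and an intermediate value argument in $\bar z_n$ cannot simply be appended. The paper runs a single retraction on $\mathbb D=[-1,1]\times\mathcal B(1)$. The data are $A(T_n)=T_n+\hat\xi T_n\log^{-1/2}T_n$ and $\lambda^-(T_n)=\hat a T_n^{-3/2}$, as in \eqref{condlast2}. The map is $\Psi=(\Psi_1,\Psi_2)(T^\star)$ with $\Psi_1=(A-t)t^{-1}\log^{1/2}t$ and $\Psi_2=t^{3/2}\lambda^-$, and it is the identity on $\partial\mathbb D$.

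Also, $\dot z(T_n)$ is not a free datum in \eqref{datalast}. The paper writes the first integral as $\frac12(\theta_k\frac{z}{|z|})^2-2\kappa g_0e^{-|z|}$, using the velocity-type parameter $\theta_k$ with $\dot z_k\approx\theta_k$.

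\textbf{Transversality in the $\lambda^-$ direction.} The bootstrap runs backward from $T_n$. An exit at $T^\star$ is transversal, and boundary data give $T^\star=T_n$, only if the normalized quantity is \emph{decreasing} in $t$ there. Your $\dot L\ge cL>0$, obtained by reading Lemma \ref{direction} as $\dot\lambda^-\approx+\epsilon_0\lambda^-$, is the opposite. With that sign, $L\le 1$ would propagate backward by itself, boundary data would re-enter the ball, and no retraction could be built. The paper uses $\dot\lambda_k^-\approx-\epsilon_0^k\lambda_k^-$, consistent with how $\lambda^+$ is integrated from $\lambda^+(T_n)=0$ in Lemma \ref{Bootstrap3}. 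This gives \eqref{ness4}, hence $\dot L(T^\star)\le-\epsilon_0/2$ when $L=1$.

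\textbf{Transversality in the $z$ direction.} Here $\frac{d}{dt}(\zeta-t)$ has no definite sign, and no repulsive linearization is used. Set $D=(A-t)^2t^{-2}\log t$. At $D=1$ one has $\dot D=-2t^{-1}+O(t^{-1}\log^{-1/2}t)<0$, simply because the weight $t^{-2}\log t$ decreases while $\dot A-1$ is small. This requires the window $|A-t|\le t\log^{-1/2}t$ of \eqref{condlast2}. With the window $\log^{-1/2}t$ of \eqref{hipotese1} that you used, the weight contributes $+t^{-1}\log^{-1}t$. That has the wrong sign and is swamped by the uncontrolled term $2(A-t)(\dot A-1)\log t$, so transversality fails.
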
 
\begin{proof}
We will use the method of reduction by contradiction. Let
$$
A(t)=\left(\kappa g_{0}\right)^{-1 / 2} e^{\frac{1}{2}|z(t)|}, \quad and \quad D(t)=(A(t)-t)^{2} t^{-2} \log t
$$
and we denote by 
$$
L(t)=\sum_{k=1,2} t^{3}\left|\lambda_{k}^{-}(t)\right|^{2}=t^3|\lambda^{-}(t)|^2.
$$

Suppose that for all $(\hat{\xi}, \hat{a})=\left(\hat{\xi}, \hat{a}_{1}, \hat{a}_{2}\right) \in \mathbb{D}=[-1,1] \times \mathcal{B}(1)$ the choice,
 {where  $\mathcal{B}(1)$ is the zero-centered ball of radius 1 in $\R^2$},
\begin{equation}\label{condlast2}
 A\left(T_{n}\right)=T_{n}+\hat{\xi} T_{n} \log ^{-1 / 2} T_{n}, \quad \lambda^{-}\left(T_{n}\right)=\hat{a}\left(T_{n}\right)^{-3 / 2}   
\end{equation}
gives us $T^{\star}=T^{\star}(\hat{\xi}, \hat{a}) \in\left(T_{0}, T_{n}\right]$. We will start by showing the estimate $$\left |\left(\kappa g_{0}\right)^{-1 / 2} e^{\frac{1}{2}|z(t)|}-t\right|\leq \frac{1}{2}t^{-1}\log ^{-1/2} t.$$ 
Then, observe that

\begin{equation}\label{ness2}
\dot{D}(t)=2(A(t)-t)(\dot{A}(t)-1) t^{-2} \log t-(A(t)-t)^{2}\left(2 t^{-3} \log t-t^{-3}\right).
\end{equation}

Before proceeding to estimate $\dot{D}$ we first claim that for all $t \in\left(T^{\star}(\hat{\xi}, \hat{a}), T_{n}\right]$,

\begin{equation}\label{ness1}
\left|\frac{\mathrm{d}}{\mathrm{~d} t}\left( e^{\frac{1}{2}|z|}\right)-\sqrt{\kappa g_{0}}\right| \lesssim t^
{-1}\log ^{-1} t .
\end{equation}

Fixed $k \in \{1,2\}$. We know from the triangular inequality and \eqref{newtheta}
$$
\left|\dot{\theta}_k \frac{z}{|z|}+2 \kappa g_{0}e^{-|z|} \right| \lesssim t^{-2} \log ^{-1} t,
$$
Similarly usingwe get $|\dot{z}_k| \lesssim t^{-1} \log ^{-1} t$ which in turn implies that
$$
\left|\dot{z}_k  \frac{z}{|z|}-\theta_k  \frac{z}{|z|}\right| \lesssim t^{-1} \log ^{-1} t
$$

Therefore using Lemma \ref{modu1} and the estimates above,
$$
\left|\left(\dot{\theta}_k \frac{z}{|z|}\right)\left(\theta_k  \frac{z}{|z|}\right)+2 \kappa g_{0} \dot{z}_k  \frac{z}{|z|} e^{-|z|} \right|\lesssim t^{-3} \log ^{-1} t .
$$

Thus using the Proposition \ref{prop1} we can integrate on the interval $\left[t, T_{n}\right]$ where $t \in\left[T^{\star}(\hat{\zeta}, \hat{a}), T_{n}\right]$ to get 
$$
\begin{aligned}
\left|\frac{1}{2}\left(\theta_k  \frac{z}{|z|}\right)^{2}-2 \kappa g_{0} e^{-|z|}\right| \lesssim t^{-2} \log ^{-1} t
\end{aligned}
$$

Thus we have,
$$
\left|\left(\theta_k  \frac{z}{|z|}\right)-\sqrt{4 \kappa g_{0}}e^{-\frac{1}{2}|z|}\right|+\left|\left(\dot{z}_k  \frac{z}{|z|}\right)-\left(\theta_k  \frac{z}{|z|}\right)\right| \lesssim t^{-1} \log ^{-1} t.
$$
which implies,
\begin{equation}\label{deriv}
 \left|\left(\dot{z}  \frac{z}{|z|}\right)-\sqrt{4 \kappa g_{0}}e^{-\frac{1}{2}|z|} \right |\lesssim t^{-1} \log ^{-1} t.   
\end{equation}
Now, since
$$
\frac{\mathrm{d}}{\mathrm{~d} t}\left(e^{\frac{1}{2}|z|}\right)=\frac{1}{2} \dot{z}  \frac{z}{|z|} e^{\frac{1}{2}|z|} .
$$

Thus, from \eqref{deriv}
$$
\left|\frac{\mathrm{d}}{\mathrm{~d} t}\left( e^{\frac{1}{2}|z|}\right)-\sqrt{\kappa g_{0}}\right| \lesssim t^{-1} \log ^{-1}(t).
$$
Hence, 
\begin{equation}\label{ness3}
|\dot{A}(t)-1| \lesssim t^
{-1}\log ^{-1} t .
\end{equation}
Which implies from \eqref{ness2} that  $\dot{D}$ is limited, then applying integration on $[t,T_n]$ for $t\in [T^{\star},T_n] $ and  using \eqref{condlast2} the desired result. On the other hand, to estimate $\dot{\mathcal{L}}$ we use \eqref{estiprop1}, \eqref{hipotese1} and Lemma \ref{direction}, to get that for all $t \in\left(T^{\star}(\hat{\xi}, \hat{a}), T_{n}\right]$
$$
\begin{aligned}
\dot{L}(t) & =\sum_{k=1,2} t^{3}\left(3 t^{-1} \lambda_{k}^{-}(t)+2 \frac{d \lambda_{k}^{-}}{d t}(t)\right) \lambda_{k}^{-}(t) \\
& =\sum_{k=1,2} t^{3}\left(3 t^{-1}-2 \epsilon_{0}^k\right) \lambda_{k}^{-}(t)^{2}+O\left(\|\vec{\varepsilon}\|_{X}^{2} t^{3}\left|\lambda_{k}^{-}(t)\right|+t\left|\lambda_{k}^{-}(t)\right|\right)  \\
& \leq\left(3 t^{-1}-2 \epsilon_{0}\right) L(t)+C t^{-1 / 2}\left(C^2 \log ^{-3}(t)+1\right) \sqrt{L(t)}
\end{aligned}
$$
where $\epsilon_0=\min\{\epsilon_0^1,\epsilon_0^2\}$ and  $C>0$ is a constant. Thus for $T_{0}$ large enough depending on $C$,
\begin{equation}\label{ness4}
\dot{L}(t) \leq-\frac{3 \epsilon_{0}}{2} L(t)+\frac{\epsilon_{0}}{2} \sqrt{L(t)} 
\end{equation}
Let,
$$
\Psi_{1}(t)=(A(t)-t) t^{-1} \log ^{1 / 2} t, \quad \Psi_{2}(t)=\left(\lambda_{1}^{-}(t) t^{3 / 2}, \lambda_{2}^{-}(t) t^{3 / 2}\right)=\lambda^{-}(t)\, t^{3 / 2}
$$

From the definition of $T^{\star}$ and continuity of the flow, at the limit $T^{\star}(\hat{\xi}, \hat{a})$ we have one of the following situation
$$
\begin{aligned}
\Psi_{1}\left(T^{\star}(\hat{\xi}, \hat{a})\right) & = \pm 1, \quad \Psi_{2} \in \mathcal{B}(1) \\
\left|\Psi_{2}\left(T^{\star}(\hat{\xi}, \hat{a})\right)\right| & =1 \, \, \text{if and only if}\, \,  \Psi_{2} \in \partial \mathcal{B}(1), \quad \Psi_{1} \in[-1,1],
\end{aligned}
$$
where  $\partial \mathcal{B}(1)$ is the boundary of the closed ball $\mathcal{B}(1)$. Now, using  \eqref{ness2} and \eqref{ness3} we get,
$$
\left|\dot{D}\left(T^{\star}(\hat{\xi}, \hat{a})\right)+2\left(T^{\star}(\hat{\xi}, \hat{a})\right)^{-1}\right| \lesssim\left(T^{\star}(\hat{\xi}, \hat{a})\right)^{-1} \log ^{-1 / 2}\left(T^{\star}(\hat{\xi}, \hat{a})\right)
$$
and so
$$
\dot{D}\left(T^{\star}(\hat{\xi}, \hat{a})\right)<-T^{\star}(\hat{\xi}, \hat{a})^{-1}<0
$$
for large enough $T_{0}$  whereas in the second case we have that $L\left(T^{\star}(\hat{\xi}, \hat{a})\right)=1$ and so from \eqref{ness4} we get,
$$
\dot{L}\left(T^{\star}(\hat{\xi}, \hat{a})\right) \leq-\frac{1}{2} \epsilon_{0}<0
$$
This transversality property implies the continuity of the map $(\hat{\xi}, \hat{a}) \rightarrow T^{\star}((\hat{\xi}, \hat{a}))$ and hence the following map,
$$
\begin{aligned}
& \Psi: \mathbb{D} \rightarrow \partial \mathbb{D} \\
& (\hat{\xi}, \hat{a}) \rightarrow\left(\Psi_{1}\left(T^{\star}(\hat{\xi}, \hat{a})\right), \Psi_{2}\left(T^{\star}(\hat{\xi}, \hat{a})\right)\right.
\end{aligned}
$$
is also continuous where $\partial \mathbb{D}$ denotes the boundary of $\mathbb{D}$. Note that if $\hat{a} \in \partial \mathcal{B}(1)$ then \eqref{ness4} implies that $\dot{L}\left(T_{n}\right)<0$, hence we have $T^{\star}(\hat{\xi}, \hat{a})=T_{n}$ and if $\hat{\xi}= \pm 1$ then from \eqref{ness3} we get $\dot{\xi}\left(T_{n}\right)<0$ and $T^{\star}(\hat{\xi}, \hat{a})=T_{n}$. Thus $\Psi(\hat{\xi}, \hat{a})=(\hat{\xi}, \hat{a})$ for all $(\hat{\xi}, \hat{a}) \in \partial \mathbb{D}$. Therefore the restriction of $\Psi$ to the boundary of $\mathbb{D}$ is the identity. However this contradicts the no retraction theorem. Thus there exists initial data ( $\bar{z}, a_{k}$ ) for $k=1,2$ such that $T^{\star}=T_{0}$.
\end{proof}
\begin{remark}
    Note that from Lemma \ref{T0} combined with Lemma \ref{Bootstrap3} the Proposition clearly follows \ref{prop1}. Furthermore, consider $n \in \N$ with $n \geq n_{0}$. As we will see, the argument below shows that as long as the approximate solution $\mathbf{u}^{n}$ exists, it satisfies \eqref{estiprop1}, which in turn implies that it does not explode in finite time. Consequently, we can assume that is defined on $[T_{0},T^{n}]$ and $\vec{u}_{n} \in \mathbf{C}([T_{0},T^{n}],X ).$
\end{remark}
\section*{Existence of Dipole}
In this final section, our goal is to guarantee the existence of dipole solutions for system \eqref{system1}. To that end, we first establish an immediate lemma before proceeding with our analysis. 
\begin{lemma}\label{companess}
Let $\vec{u}_{n} \in \mathcal{C}\left([T_0, T],X\right)$ be a sequence of solutions to \eqref{system1} and assume that for some $C>0$,
$$
\vec{u}_{n}(T_0) \rightharpoonup \vec{u}^{0} \text { in } X-\text { weak, and } \quad \forall n, \quad\left\|\vec{u}_{n}(t)\right\|_{\mathcal{C}\left([T_0, T], X\right)} \leq C.
$$
\end{lemma} 
Once the previous lemma has been established, we have all the necessary tools to provide a proof of our main result. 
\begin{proof}[Proof of Theorem \ref{2maintheorem}]
 From Proposition \ref{prop1} there exists a sequence of final data functions $\vec{u}_{n} \in X$ such that,

%$$
%\forall t \in\left[T_{0}, T_{n}\right], \quad \vec{u}_{n}(t)=\Phi\left(T_{n}, t, \vec{u}_{0, n}\right)
%$$
%where $\Phi$ is the flow of \eqref{system1}. Note that $T_{0}$ does not depend on $T_{n}$ and that there exists $C>0$ independent of $n$ such that
$$
\forall t \in\left[T_{0}, T_{n}\right], \quad\left\|\vec{u}_{n}(t)-\vec{R}_{1,n}(t)-\vec{R}_{2,n}(t)\right\|_{X} \leq C t^{-1} \log ^{-3 / 2} t .
$$

Let $\vec{u}_{0}$ be a weak limit in $X$ of the bounded sequence $\vec{u}_{n}\left(T_{0}\right)$ up to a subsequence extraction. Thus, from the well-posed locally of the system \eqref{system2} in $X$ we have that there exists $\vec{u} \in C([T_0,T^{\star}],X)$ such that
$
\vec{u}(T_0)=\vec{u}_{0}$, where $T^{\star}$ is the maximum time of existence of the solution.

Now, using the Lemma \ref{companess} on $\left[T_{0}, T^{\star}\right)$ we have $$\vec{u}_{n}(T_0) \rightharpoonup \vec{u}_0 \, \, \text{weakly in} \, \,  X$$
then since there continued dependence of $\vec{u}$ on $\left[T_{0}, T^{\star}\right)$ it follows that $$\vec{u}_{n}(t) \rightharpoonup \vec{u}(t) \, \, \text{weakly in} \, \, X.$$ 
Therefore, $$
\forall t \in\left[T_{0}, T^{\star}\right), \quad\left\|\vec{u}(t)-\vec{R}_1(t)-\vec{R}_2(t)\right\|_{X} \leq C t^{-1} \log ^{-3 / 2} t .
$$
From where we deduced that $T^{\star}=\infty$, it could not be so that we would obtain a contradiction of the alternative blow up generated from the well-posed locally of the system.
 Moreover, note that the estimates of the Lemma \ref{modu1} provide uniform bounds on the time derivatives of the geometric parameters $\left(z_{k, n}(t), \theta_{k, n}(t)\right)_{k=1,2}$ on the interval $\left[T_{0}, T_{n}\right]$. Therefore by Ascoli Lemma we get uniform convergence as $n \rightarrow+\infty$,

$$
\left(z_{k, n}(t), \theta_{k, n}(t)\right)_{k=1,2} \rightarrow\left(z_{k}(t), \theta_{k}(t)\right)_{k=1,2}
$$
on compact subsets of $\left[T_{0},+\infty\right)$ up to a subsequence extraction for some continuous functions $\left(z_{k}(t), \theta_{k}(t)\right)_{k=1,2}$. Thus as $n \rightarrow+\infty$,
$$
\vec{\varepsilon}_{n}(t) \rightharpoonup \vec{\varepsilon}(t) \text { in } X \text { - weak }
$$
for $t \in\left[T_{0},+\infty\right)$. Therefore for $t \in\left[T_{0},+\infty\right)$, using the uniform estimates due to Proposition \ref{prop1} we get,
\begin{equation*}
||z(t)|-2 \log t| \lesssim \log^{-1/2} t,  \quad\|\vec{\varepsilon}(t)\|_{X} \lesssim t^{-1} \log ^{-3 / 2} t.
\end{equation*}   
Thus, by applying the limit step we reach the desired result.
\end{proof}

			%%%%%%%%%%%%%%%%%%%%%%%%%%%%%%%%%%%%%%%%%%%%%

			\section*{Conflict of interest} The authors declare that they have no conflict of interest. 
			
			\section*{Data Availability}
			There is no data in this paper.

		\end{document}